\documentclass[a4paper,reqno,12pt]{amsart}
\usepackage{amsmath}
  \usepackage{paralist}
  \usepackage{graphics} 
  \usepackage{epsfig} 
\usepackage{graphicx}  \usepackage{epstopdf}
 \usepackage[colorlinks=true]{hyperref}
\hypersetup{urlcolor=blue, citecolor=red}

\setlength{\paperheight}{29.7cm}
\setlength{\paperwidth}{20.99cm}
\setlength{\textwidth}{16.5cm}
\setlength{\textheight}{24cm}
\setlength{\topmargin}{-0.5cm}
\setlength{\oddsidemargin}{-0.25cm}
\setlength{\evensidemargin}{-0.25cm}


\newtheorem{theorem}{Theorem}[section]
\newtheorem{corollary}{Corollary}

\newtheorem{lemma}[theorem]{Lemma}

\newtheorem{example}{Example}

\newtheorem{construction}{Construction}

\theoremstyle{definition}
\newtheorem{definition}[theorem]{Definition}
\newtheorem{remark}{Remark}

\newcommand{\gaussm}[3]{\genfrac{[}{]}{0pt}{}{#1}{#2}_{#3}}
\newcommand{\PG}[2]{\operatorname{PG}(#1,#2)}
\newcommand{\F}[2]{\mathbb{F}_{#2}^{#1}}

\newcommand{\aspace}{\PG{v-1}{q}}
\newcommand{\mat}[1]{\mathbf{#1}}
\newcommand{\vek}[1]{\mathbf{#1}}
\newcommand{\frobenius}{\mathrm{F}}

\title[$q^r$-divisible codes] 
      {On projective $\mathbf{q^r}$-divisible codes}

\author[D.~Heinlein, Th.~Honold, M.~Kiermaier, S.~Kurz, and A.~Wassermann]{}

\subjclass{Primary: 94B05, 94B65; Secondary: 05B40, 94B27.}
 \keywords{Divisible codes, partial spreads, linear programming method, vector space partitions.}

 \email{daniel.heinlein@aalto.fi}
 \email{honold@zju.edu.cn}
 \email{michael.kiermaier@uni-bayreuth.de}
 \email{sascha.kurz@uni-bayreuth.de}
 \email{alfred.wassermann@uni-bayreuth.de}

\thanks{The work of the authors from Bayreuth was supported by the ICT COST Action IC1104
and grant KU 2430/3-1 -- Integer Linear Programming Models for Subspace Codes and Finite Geometry
from the German Research Foundation. Th.~Honold gratefully acknowledges financial
  support for a short visit to the University of Bayreuth in July~2016, where part of this research 
  was done. His work was also supported by the National Natural Science Foundation of China under
  Grant 61571006.}

\begin{document}
\maketitle

\centerline{\scshape Daniel Heinlein}
\medskip
{\footnotesize
 \centerline{Department of Communications and Networking}
   \centerline{Aalto University}
    \centerline{F-76 Aalto, Finland}
} 

\medskip

\centerline{\scshape Michael Kiermaier, Sascha Kurz, and Alfred Wassermann }
\medskip
{\footnotesize
 \centerline{Mathematisches Institut}
   \centerline{Universit\"at Bayreuth}
    \centerline{D-95440 Bayreuth, Germany}
} 

\medskip

\centerline{\scshape Thomas Honold}
\medskip
{\footnotesize
 \centerline{Department of Information and Electronic Engineering}
   \centerline{Zhejiang University}
    \centerline{38 Zheda Road, Hangzhou, Zhejiang 310027, China}
}

\bigskip

\begin{abstract}
A projective linear code over $\mathbb{F}_q$ is called $\Delta$-divisible if all weights of its codewords are divisible by $\Delta$. 
Especially, $q^r$-divisible projective linear codes, where $r$ is some integer, arise in many applications of collections of 
subspaces in $\mathbb{F}_q^v$. One example are upper bounds on the cardinality of partial spreads. Here we survey the known results 
on the possible lengths of projective $q^r$-divisible linear codes. 
\end{abstract}

\section{Introduction}
A \emph{linear $[n,d,k]_q$ code} is a $k$-dimensional subspace of $\F{n}{q}$ whose elements, called 
\emph{codewords}, have minimum Hamming distance $d$. For an introduction into coding theory we refer the 
reader to, e.g., \cite{huffman2010fundamentals}. Linear codes arise in many (combinatorial) contexts 
and were studied intensively since the seminal work of Shannon \cite{shannon1948mathematical}. The number of non-zero entries of a codeword 
is called its \emph{weight}. The \emph{weight distribution}, i.e., the number of occurrences of possible 
(non-zero) weight values, is an important invariant of a (linear) code. One important area of research studies codes 
where the minimum weight $d$ is a large as possible given the parameters $n$, $k$, and $q$. Linear codes 
with just a few different weights were also studied in the literature, where an important case is that of 
\emph{two-weight codes}. In \cite{ward1981divisible} Harold N.~Ward introduced \emph{$\Delta$-divisible} codes, 
where each weight has to be divisible by some given integer $\Delta$. For a survey and further related 
literature, see, e.g., \cite{ward2001divisible_survey} and \cite{liu2006divisible,ward1998quadratic,ward1999introduction}.
The divisibility constant $\Delta$ is relatively constrained, see e.g.~\cite[Theorem 1]{ward1981divisible}: If 
$\gcd(\Delta,q)=1$, then a $\Delta$-divisible code over $\mathbb{F}_q$ (with no 0 coordinates) is a
$\Delta$-fold replicated code. Another example is the Gleason--Pierce--Ward Theorem characterizing $\Delta$-divisible 
codes of dimension $k=n/2$. Here, we study the special case of $q^r$-divisible codes, which was called the \emph{radical case} 
in \cite{ward2001divisible_survey}. Even more, we assume that the codes 
\begin{itemize}
  \item[(1)] have full length, i.e., the generator matrix does not contain a zero column;
  \item[(2)] are \emph{projective}, i.e., the generator matrix does not contain two columns which a scalar
   multiples of each other.
\end{itemize}
In other words, we assume that the minimum distance of the corresponding \emph{dual code}, see e.g.~\cite{huffman2010fundamentals}, 
is at least three. The columns of a generator matrix of a projective linear code can be interpreted as a set of points in $\PG{v-1}{q}$, 
where $v\ge k$, see e.g.~\cite{dodunekov1998codes}.

As an application of projective $q^r$-divisible codes we mention partial $t$-spreads, i.e.~a collections of $t$-dimensional elements 
of $\PG{v-1}{q}$ with trivial intersection. More relations with other combinatorial objects are summarized in Section~\ref{sec_relations}. If $\mathcal{P}$ 
is a partial $t$-spread, then the set of uncovered points, i.e.~the \emph{holes}, form a $q^{t-1}$-divisible set, i.e.~the corresponding code 
is $q^{t-1}$-divisible, see Lemma~\ref{lemma_connection vsp}. The size $|\mathcal{P}|$ of a partial spread is large if the corresponding number $n$ of 
holes is small. The existence of a $q^{t-1}$-divisible set of cardinality $n$ is a necessary condition for the existence of a partial 
$t$-spread with corresponding size. And indeed, all known upper bounds for the size of a partial spread can be deduced from non-existence 
results for projective $q^{t-1}$-divisible codes. 

The main target of this paper is to survey the known existence and non-existence results for possible lengths of projective 
$q^r$-divisible codes. Here we focus on the parameters $q$ and $r$, while refinements to the possible dimensions
$v$ are of course possible and useful in some applications.  

Besides constructions most of the presented results are based on the \emph{linear programming method} for linear codes. Since the constructions often have a nice geometric 
interpretation we will use the language of $q^r$-divisible sets in the following. However, results easily translate to projective 
$q^r$-divisible codes, see Subsection~\ref{subsec_delta_divisible}.

The remaining part of the paper is structured as follows. In Section~\ref{sec_preliminaries} we provide the notation and the 
tools to study our research question. The crucial objects, i.e.~$\Delta$-divisible sets and codes, are defined in 
Subsection~\ref{subsec_delta_divisible} followed by examples of $q^r$-divisible sets in Subsection~\ref{subsec_examples_q_r_divisible}. 
The general linear programming method for linear codes is outlined in Subsection~\ref{subsec_linear_programming_method}. If 
we drop the assumption that the codes have to be projective, then the classification problem of the possible lengths is completely solved, 
see Subsection~\ref{subsec_characterization_q_r_multisets}. Relations to combinatorial objects are summarized in Section~\ref{sec_relations} 
followed by more constructions of $q^r$-divisible sets in Section~\ref{sec_constructions}. In Section~\ref{sec_tools_to_exclude} 
we draw some analytical conclusions from the linear programming method. A main result is Theorem~\ref{thm_exclusion_q_r} in 
Subsection~\ref{subsec_length_classification_small} stating that for $n\le rq^{r+1}$ all possible cardinalities of $q^r$-divisible sets 
can be written as $n=a\cdot \frac{q^{r+1}-1}{q-1}+b\cdot q^{r+1}$ for some integers $a,b\in\mathbb{N}_0$. Concrete numerical results 
are presented in Section~\ref{sec_exclusion}. A conclusion and some lines of future research are given in Section~\ref{sec_conclusion}. 
In an appendix we list examples of $q^r$-divisible sets found by computer searches, see Section~\ref{sec_computer_search_hole_configurations}, 
and excluded intervals of cardinalities of $q^r$-divisible sets based on the linear programming method, Section~\ref{app_exclusion_lists}.

   
\section{Preliminaries}
\label{sec_preliminaries}
In the introduction we have only briefly mentioned the crucial objects. Here we give the precise definitions. As already 
mentioned, there is a correspondence between projective codes and sets in the projective space, see e.g.~\cite{dodunekov1998codes}. 
We will mainly use the latter language and introduce $\Delta$-divisible sets and codes in Subsection~\ref{subsec_delta_divisible}. 
Besides the application point of view, a reason for studying the special case $\Delta=q^r$ is given by the nice inherent 
mathematical properties. As an example, Lemma~\ref{lemma_heritable} states that the $(r-q)$-dimensional subspaces of a $q^r$-divisible 
set are $q^{r-j}$-divisible for $0\le j\le r$. In general the existence of $\Delta$-divisible sets is rather \textit{unlikely} if 
$\Delta$ has a large factor coprime to $q$, see for example Corollary~\ref{cor_nonexistence_arithmetic_progression}.(b). 
In Subsection~\ref{subsec_examples_q_r_divisible} we give some examples and constructions for $q^r$-divisible sets.
The general linear programming method based on the MacWilliams identities can be used to excluded specific cardinalities of 
$\Delta-$ or $q^r$-divisible subsets of $\aspace$ and is described in Subsection~\ref{subsec_linear_programming_method}. In 
Subsection~\ref{subsec_characterization_q_r_multisets} we review the classification of the possible lengths of $q^r$-divisible multisets 
of points in $\aspace$. 


\subsection{$\Delta$-divisible sets and codes}
\label{subsec_delta_divisible}

Let $\PG{v-1}{q}$ denote the \emph{projective space} of dimension $v-1$ over $\mathbb{F}_q$, i.e., the set of $1$-dimensional 
subspaces, called \emph{points}, of $\F{v}{q}$. The $(v-1)$-dimensional subspaces of $\mathbb{F}_q^v$ are called \emph{hyperplanes}. 
The number of $k$-dimensional subspaces of $\mathbb{F}_q^v$ is given by $\gaussm{v}{k}{q}=\prod_{i=0}^{k-1} \frac{q^{v-i}-1}{q^{k-i}-1}$. 
Especially, we have $\gaussm{v}{1}{q}=\gaussm{v}{v-1}{q}=\tfrac{q^v-1}{q-1}$ for the number of points and hyperplanes in $\aspace$.

\begin{definition}
  \label{def_delta_divisible}
  A subset $\mathcal{C}\subseteq \aspace$ is called \emph{$\Delta$-divisible} for an integer $\Delta\ge 1$ if there exists 
  an integer $u$ with $|\mathcal{C}\cap H|\equiv u\pmod{\Delta}$ for each hyperplane $H$. If $\mathcal{C}=\emptyset$ 
  or $v=1$ we call it \emph{trivial}.
\end{definition}

The empty set is $\Delta$-divisible for all $\Delta\ge 1$. If the dimension $v$ of the ambient space $\F{v}{q}$ is $1$, then no hyperplane 
exist at all. All subsets $\mathcal{C}\subseteq \aspace$ are $1$-divisible. Definition~\ref{def_delta_divisible} directly translates 
to multisets of points instead of set of points. 

Taking the elements of $\mathcal{C}\subseteq \aspace$, i.e., one representing vector, as columns of a generator matrix, we obtain 
a \emph{linear $[n,k]$-code} over $\mathbb{F}_q$, where $n=|\mathcal{C}|$ and $\dim(\langle\mathcal{C}\rangle)=k\le v$. The fact that 
$0\notin\mathcal{C}$ translates to the property that the \emph{length} $n$ of the corresponding linear code equals its \emph{effective 
length}, i.e., it contains no zero-column and is of \emph{full length}. An alternative characterization is that the minimum distance 
of the dual code is at least $2$. Starting from a linear $[n,k]$-code over $\mathbb{F}_q$ without zero columns, taking the column vectors of a 
generator matrix gives a multiset of points in $\PG{k-1}{q}$, which is also called \emph{projective system}. In the special case of a set 
$\mathcal{C}$ the projective system is called \emph{simple} and the resulting linear code is called \emph{projective}, i.e., no column of 
the generator matrix is a scalar multiple of another column, or, in other words, the projective system for every generator matrix of the 
code is simple. An alternative characterization is that the minimum distance of the dual code is at least $3$  

The existence of an (arbitrary) integer $u$ with $|\mathcal{C}\cap H|\equiv u\pmod{\Delta}$ for every hyperplane $H$, translates 
to the divisibility of all \emph{weights}, i.e., the number of non-zero entries, of the codewords in the corresponding linear 
code are divisible by $\Delta$. Those so-called $\Delta$-divisible codes were introduced by Harold N.~Ward \cite{ward1981divisible}.
As mentioned in the introduction, the cases where $\Delta$ is coprime to the characteristic of the underlying field are very restricted. 
In the following we will, almost always, restrict ourselves on the special case $\Delta=q^r$ for some integer $r\ge 1$. For $q=2$ the 
case $r=1$ is known under the name \emph{even codes} and the case $r=2$ under the name \emph{doubly even codes.} So far, there are just 
a few papers on the case $r=3$, called \emph{triple even binary codes}, see e.g.\ \cite{betsumiya2012triply,no59}.

First we observe that the variable $u$ in Definition~\ref{def_delta_divisible} can be computed from the cardinality $n$ of 
$\mathcal{C}$, which is the reason why it does not appear in the corresponding definition for divisible codes. To this end, 
we first state the so-called \emph{standard equations}. (These are a special case of the MacWilliams identities~(\ref{mac_williams_identies}), see 
Subsection~\ref{subsec_linear_programming_method}, invoking the minimum distance of the the dual code of at least three.)
\begin{lemma}
  \label{lemma_standard_equations_q}
  Let $\emptyset\neq\mathcal{C}\subseteq\aspace$ with $|\mathcal{C}|=n$ and 
  $a_i$ be the number of hyperplanes containing exactly $i$ points for $0\le i\le n$. Then, we have 
  \begin{eqnarray}
    \sum_{i=0}^{n}a_i &=& \gaussm{v}{1}{q},\label{eq_ste1}\\
    \sum_{i=1}^{n}ia_i &=& n\cdot \gaussm{v-1}{1}{q},\text{ and}\label{eq_ste2}\\
    \sum_{i=2}^{n}{i\choose 2} a_i &=& {n\choose 2}\cdot \gaussm{v-2}{1}{q}\label{eq_ste3}.
  \end{eqnarray}
\end{lemma}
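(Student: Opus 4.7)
The three identities are all standard double counting identities on incidence structures, so the plan is to verify each one by counting a suitable set of incidences in two ways, using only the fact that $\mathcal{C}$ is a \emph{set} (so two distinct points span exactly a $2$-dimensional subspace) together with two elementary facts about $\PG{v-1}{q}$: the number of hyperplanes through a fixed $j$-dimensional subspace is $\gaussm{v-j}{1}{q}$ (dually, the number of hyperplanes of a quotient space), and in particular the total number of hyperplanes equals $\gaussm{v}{1}{q}$.

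First I would dispose of \eqref{eq_ste1}. Since every hyperplane $H$ meets $\mathcal{C}$ in some number $i\in\{0,1,\dots,n\}$ of points, the $a_i$ partition the set of hyperplanes, so $\sum_i a_i$ is just the total number of hyperplanes, namely $\gaussm{v}{1}{q}$.

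Next I would prove \eqref{eq_ste2} by double-counting the incidences
\[
  I_1=\bigl\{(P,H)\,:\,P\in\mathcal{C},\ H\text{ a hyperplane},\ P\subseteq H\bigr\}.
\]
Summing first over hyperplanes gives $|I_1|=\sum_{i\ge 1} i\,a_i$. Summing first over the $n$ points of $\mathcal{C}$ gives $|I_1|=n\cdot\gaussm{v-1}{1}{q}$, since each point is contained in $\gaussm{v-1}{1}{q}$ hyperplanes. This yields \eqref{eq_ste2}.

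Finally, I would prove \eqref{eq_ste3} by double-counting
\[
  I_2=\bigl\{(\{P_1,P_2\},H)\,:\,P_1,P_2\in\mathcal{C},\,P_1\neq P_2,\ H\text{ a hyperplane},\ P_1,P_2\subseteq H\bigr\}.
\]
From the hyperplane side, $|I_2|=\sum_{i\ge 2}\binom{i}{2}a_i$. From the point-pair side, there are $\binom{n}{2}$ unordered pairs of distinct points in $\mathcal{C}$; here the assumption that $\mathcal{C}$ is a set (equivalently, that the corresponding code is projective) is essential, because it guarantees that each such pair spans a genuine $2$-dimensional subspace $L\subseteq\mathbb{F}_q^v$. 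A hyperplane contains both points iff it contains $L$, and the number of hyperplanes through a fixed $2$-dimensional subspace is $\gaussm{v-2}{1}{q}$ (the number of hyperplanes in the $(v-2)$-dimensional quotient space). Hence $|I_2|=\binom{n}{2}\gaussm{v-2}{1}{q}$, which gives \eqref{eq_ste3}. There is really no obstacle here; the only point to flag is the use of the \emph{simple}/projective hypothesis in step three, which ensures the spanned $2$-subspace always has the generic dimension and prevents overcounting.
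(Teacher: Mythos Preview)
Your proof is correct and follows exactly the paper's approach: double-counting the incidences $(H)$, $(P,H)$, and $(\{P_1,P_2\},H)$ to obtain the three equations. Your added remark that distinct points of $\mathcal{C}$ span a genuine $2$-dimensional subspace is automatically satisfied since $\mathcal{C}\subseteq\PG{v-1}{q}$ is a set of points, but it is a helpful clarification of why the count in \eqref{eq_ste3} works.
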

\begin{proof}
  Double-count the incidences of the tuples $(H)$, $(B_1,H)$, and $(\{B_1,B_2\},H)$, where $H$ is a hyperplane 
  and $B_1\neq B_2$ are points contained in $H$. (If $n=1$, then Equation~(\ref{eq_ste3}) reads $0=0$.)
\end{proof}

\begin{lemma}
  \label{lemma_modulo_constraint_all_points}
  If $\mathcal{C}\subseteq \PG{v}{q}$ is $q^r$-divisible, then $|\mathcal{C}\cap H|\equiv |\mathcal{C}|\pmod{q^{r}}$
  for each hyperplane $H$.      
\end{lemma}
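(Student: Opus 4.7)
The plan is to combine the divisibility hypothesis with the first two standard equations from Lemma~\ref{lemma_standard_equations_q} in a single modular computation. By the definition of $q^r$-divisibility some integer $u$ satisfies $|\mathcal{C}\cap H|\equiv u\pmod{q^r}$ for every hyperplane $H$, so the hyperplane-intersection numbers $a_i$ introduced in Lemma~\ref{lemma_standard_equations_q} vanish unless $i\equiv u\pmod{q^r}$. Consequently $i\,a_i\equiv u\,a_i\pmod{q^r}$ for every index $i$, and summing yields
\begin{equation*}
n\cdot\gaussm{v-1}{1}{q}=\sum_i i\,a_i\equiv u\sum_i a_i=u\cdot\gaussm{v}{1}{q}\pmod{q^r},
\end{equation*}
where the two equalities are exactly (\ref{eq_ste2}) and (\ref{eq_ste1}).

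Next I would exploit the identity $\gaussm{v}{1}{q}-\gaussm{v-1}{1}{q}=q^{v-1}$. In the regime $v\ge r+1$ that is relevant to the paper this forces $q^{v-1}\equiv 0\pmod{q^r}$, so the right-hand side of the displayed congruence becomes $u\cdot\gaussm{v-1}{1}{q}\pmod{q^r}$, and we arrive at $(n-u)\cdot\gaussm{v-1}{1}{q}\equiv 0\pmod{q^r}$. Since $\gaussm{v-1}{1}{q}=1+q+\dots+q^{v-2}\equiv 1\pmod q$ is a unit modulo $q^r$, it can be cancelled to give $n\equiv u\pmod{q^r}$; combined with $|\mathcal{C}\cap H|\equiv u\pmod{q^r}$ this produces the desired $|\mathcal{C}\cap H|\equiv|\mathcal{C}|\pmod{q^r}$.

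The argument is essentially forced once one double-counts point-hyperplane incidences, so the only step requiring genuine attention is the invertibility of $\gaussm{v-1}{1}{q}$ modulo $q^r$, which I would verify from the geometric-sum expansion recorded above. Everything else is bookkeeping with the standard equations, and the proof works uniformly once the ambient space is large enough that the $q^{v-1}$ term collapses mod $q^r$.
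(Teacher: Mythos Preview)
Your argument is correct and follows essentially the same route as the paper's: both combine equations~(\ref{eq_ste1}) and~(\ref{eq_ste2}) using that $a_i=0$ unless $i\equiv u\pmod{q^r}$, the only cosmetic difference being that the paper forms the exact identity $(q-1)\sum_i i\,a_{u+i\Delta}=(n-uq)q^{v-1-r}-(n-u)/q^r$ and reads off integrality, whereas you stay modular and cancel the unit $\gaussm{v-1}{1}{q}$. Your explicit flag that the step needs $v\ge r+1$ is a hypothesis the paper's own proof uses without comment.
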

\begin{proof}
  Choose $0\le u\le q-1$ such that $|\mathcal{C}\cap H|\equiv u\pmod{q^{r}}$ for each hyperplane $H$ and set $\Delta=q^r$.
  Equation~(\ref{eq_ste1}) and Equation~(\ref{eq_ste2}) from Lemma~\ref{lemma_standard_equations_q} can be rewritten to
  $$
  (q-1)\cdot \left(\sum_{i\ge 0} a_{u+i\Delta}\right) = q^v-1\quad\text{and}\quad
  (q-1)\cdot \left(\sum_{i\ge 0} (u+i\Delta)\cdot a_{u+i\Delta}\right) = n\cdot\left(q^{v-1}-1\right).
  $$
  The second equation minus $u$ times the first equation gives
  $$
    (q-1)\cdot \left(\sum_{i\ge 0} ia_{u+i\Delta}\right) = (n-uq) q^{v-1-r}-\frac{n-u}{q^r},  
  $$
  so that $(n-u)/q^r$ is an integer.
\end{proof}

\begin{lemma}
  \label{lemma_heritable}
  If $\mathcal{C}\subseteq \PG{v}{q}$ is $q^r$-divisible, then each $(r-j)$-dimensional subspace is $q^{r-j}$-divisible for 
  all $0\le j\le r$. 
\end{lemma}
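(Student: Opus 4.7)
The plan is to prove the statement by induction on $j$, reading the hypothesis as ``every subspace of codimension $j$ has $q^{r-j}$-divisible intersection with $\mathcal{C}$'' (the base case $j=0$ is just the hypothesis on $\mathcal{C}$ itself). The engine driving the induction is the standard pencil identity combined with Lemma~\ref{lemma_modulo_constraint_all_points} applied inside a smaller ambient space.

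For the inductive step, I fix a codimension-$j$ subspace $U$ and an arbitrary hyperplane $W$ of $U$ (i.e., a codimension-$(j+1)$ subspace of $\aspace$ contained in $U$), and I want to show that $|\mathcal{C}\cap W|$ is congruent to $|\mathcal{C}\cap U|$ modulo $q^{r-j}$, uniformly in $W$. I choose any codimension-$(j-1)$ subspace $V'\supseteq U$, so that $W$ has codimension $2$ in $V'$. By the induction hypothesis applied to $V'$, the set $\mathcal{C}\cap V'$ is $q^{r-j+1}$-divisible in $V'$. Let $U=U_0,U_1,\ldots,U_q$ be the $q+1$ hyperplanes of $V'$ that contain $W$: these form a pencil in $V'$, and every point of $V'$ outside $W$ lies in exactly one $U_i$, while every point of $W$ lies in all of them. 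Counting points of $\mathcal{C}\cap V'$ accordingly yields the key identity
\[
  \sum_{i=0}^{q} |\mathcal{C}\cap U_i| \;=\; |\mathcal{C}\cap V'| + q\cdot|\mathcal{C}\cap W|.
\]

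Now I invoke Lemma~\ref{lemma_modulo_constraint_all_points} inside $V'$ (treating $V'$ as the ambient space and $\mathcal{C}\cap V'$ as a $q^{r-j+1}$-divisible set in it): each $|\mathcal{C}\cap U_i|$ is congruent to $|\mathcal{C}\cap V'|$ modulo $q^{r-j+1}$. Summing gives $\sum_{i=0}^{q} |\mathcal{C}\cap U_i| \equiv (q+1)\,|\mathcal{C}\cap V'| \pmod{q^{r-j+1}}$, so the identity rearranges to $q\cdot|\mathcal{C}\cap W| \equiv q\cdot |\mathcal{C}\cap V'| \pmod{q^{r-j+1}}$, and cancelling the factor $q$ yields $|\mathcal{C}\cap W| \equiv |\mathcal{C}\cap V'| \pmod{q^{r-j}}$. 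Since the right-hand side does not depend on $W$ (and in particular $|\mathcal{C}\cap U|\equiv|\mathcal{C}\cap V'|\pmod{q^{r-j}}$ by the same argument), every hyperplane of $U$ meets $\mathcal{C}$ in the same residue class modulo $q^{r-j}$, which is exactly $q^{r-j}$-divisibility of $\mathcal{C}\cap U$ in $U$.

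The only delicate point I expect is the ``cancellation of $q$'' step: it is crucial that the congruence before cancellation holds modulo $q^{r-j+1}$ rather than $q^{r-j}$, which is precisely why the induction must be set up to give one extra power of $q$ at the previous stage. Apart from that bookkeeping, the argument is routine: the pencil identity is elementary double counting, and Lemma~\ref{lemma_modulo_constraint_all_points} is applied essentially as a black box in the restricted ambient $V'$.
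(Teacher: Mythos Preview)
Your proof is correct and follows essentially the same approach as the paper's: both argue by induction, use the pencil identity $\sum_{i=0}^{q}|\mathcal{C}\cap U_i|=|\mathcal{C}\cap V'|+q\,|\mathcal{C}\cap W|$ through a codimension-$2$ subspace, and then cancel the extra factor of $q$ to drop the divisibility exponent by one. The paper's write-up simply does the step $j=0\to j=1$ in the full ambient space (showing every codimension-$2$ subspace has the same residue modulo $q^{r-1}$) and remarks that this suffices, whereas you spell out the general inductive step inside the auxiliary space $V'$ and invoke Lemma~\ref{lemma_modulo_constraint_all_points} there explicitly; this is a difference in presentation, not in substance.
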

\begin{proof}
  It suffices to consider $j=1$, where $m\ge 1$. Choose $u$ such that $|\mathcal{C}|\equiv|\mathcal{C}\cap H|\equiv u\pmod{q^{r}}$ 
  for each hyperplane $H$. Let $S$ be a subspace of co-dimension $2$ with $|\mathcal{C}\cap S|\equiv\overline{u}\pmod{q^{r-1}}$. 
  Counting the total number of elements of $\mathcal{C}$ via the $q+1$ hyperplanes containing $S$ gives 
  $(q+1)u-q\overline{u}\equiv u\pmod{q^{r}}$, i.e., $u\equiv \overline{u}\pmod{q^{r-1}}$. 
\end{proof}

The term hyperplane of $\mathcal{C}$ translates to the \emph{residual code} of the corresponding linear code. So, we have 
weakened modulo conditions on the number of elements of $\mathcal{C}$ in subspaces or weakened divisibility conditions for 
the weights of the iterative residual codes. Next, we give some examples of $q^r$-divisible 
sets $\mathcal{C}\subseteq\aspace$.


\subsection{Examples of $\mathbf{q^r}$-divisible subsets of $\mathbf{\aspace}$ and a Frobenius type number}
\label{subsec_examples_q_r_divisible}

The family of $q$-ary $\left[\frac{q^k-1}{q-1},k,q^{k-1}\right]$ simplex codes (dual Hamming
codes) is $q^{k-1}$-divisible since the non-zero codewords have constant weight $\Delta=q^{k-1}$.

\begin{example}
  \label{example_r_flat}
  If $\mathcal{C}\subseteq\PG{v}{q}$ is the union of all points of an $r+1$-dimensional subspace, i.e., an $(r+1)$-flat, 
  where $r\ge 1$, then $n=\gaussm{r+1}{1}{q}$, $\dim(\mathcal{C})=r+1$, and $\mathcal{C}$ is $q^r$-divisible.
\end{example}   

The family of $\left[q^{r+1},r+2,q^r\right]$ first-order (generalized) Reed-Muller codes is $q^r$-divisible since the weights of 
the non-zero codewords are either $q^r$ or $q^{r+1}$.

\begin{example}
  \label{example_affine_r_space}
  If $\mathcal{C}\subseteq\PG{v}{q}$ is the complement of an $r+1$-dimensional subspace in an $r+2$-dimensional subspace, 
  where $r\ge 1$, then $n=q^{r+1}$, $\dim(\mathcal{C})=r+2$, and $\mathcal{C}$ is $q^r$-divisible. 
\end{example}   

For the combination of two $q^r$-divisible sets it is advantageous to formulate construction via the corresponding codes. To this end, 
let $C_i$ ($i=1,2$) be linear $\left[n_i,k_i\right]$ codes over $\mathbb{F}_q$ with generating matrices $\mat{G}_i$ (in the broader sense), chosen as follows:
$\mat{G}_1$ and $\mat{G}_2$ have the same number $k$ of rows, and their left kernels intersect only in $\{\vek{0}\}$. Then
$\mat{G}=(\mat{G}_1|\mat{G}_2)$ generates a linear $[n_1+n_2,k]$ code $C$, called a \emph{juxtaposition} of $C_1$ and $C_2$. It is clear that $C$
is $q^r$-divisible if $C_1$ and $C_2$ are. If $C_1$ and $C_2$ are projective, we can force $C$ to be projective as well by choosing
$\mat{G}_i$ appropriately, e.g., $\mat{G}_1=\left(
  \begin{smallmatrix}
    \mat{G}_1'\\\mat{0}
  \end{smallmatrix}\right)$, $\mat{G}_2=\left(
  \begin{smallmatrix}
    \mat{0}\\\mat{G}_2'
  \end{smallmatrix}\right)$, in which case $C$ is just the direct sum $C_1\oplus C_2$ of $C_1$ and $C_2$. For the ease of notation we 
also use the direct sum for sets of points in $\aspace$:  

\begin{lemma}
  \label{lemma_add_configurations}
  Let $\mathcal{C}_1\subseteq\PG{v_1-1}{q}$ and $\mathcal{C}_2\subseteq\PG{v_2-1}{q}$ be $q^r$-divisible, then 
  $\mathcal{C}=\mathcal{C}_1\oplus\mathcal{C}_2\subseteq\PG{v_1+v_2-1}{q}$ is $q^r$-divisible of cardinality 
  $\left|\mathcal{C}_1\right|+\left|\mathcal{C}_2\right|$ and we have $\dim(\langle\mathcal{C}_1\oplus\mathcal{C}_2\rangle)=\dim(\langle\mathcal{C}_1\rangle)+\dim(\langle\mathcal{C}_2\rangle)$. 
\end{lemma}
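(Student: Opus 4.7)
The plan is to work directly in the ambient decomposition $V:=V_1\oplus V_2$ with $V_i\cong\F{v_i}{q}$, identifying $\mathcal{C}_i$ with its image under the canonical embedding $V_i\hookrightarrow V$\,---\,this is precisely the point-set description of the direct-sum construction $C=C_1\oplus C_2$ spelled out just above the lemma. Since $V_1\cap V_2=\{0\}$, the two embedded sets are disjoint as subsets of $\PG{v_1+v_2-1}{q}$, so $|\mathcal{C}|=|\mathcal{C}_1|+|\mathcal{C}_2|$. For the same reason $\langle\mathcal{C}\rangle=\langle\mathcal{C}_1\rangle+\langle\mathcal{C}_2\rangle$ is a direct sum of subspaces lying in $V_1$ and $V_2$ respectively, yielding $\dim\langle\mathcal{C}\rangle=\dim\langle\mathcal{C}_1\rangle+\dim\langle\mathcal{C}_2\rangle$.

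The remaining task is to verify $q^r$-divisibility. I would fix an arbitrary hyperplane $H\subseteq V$ and examine the two restrictions $H\cap V_i$. Since $H$ has codimension one, the dimension formula forces each $H\cap V_i$ to be either $V_i$ itself or a hyperplane of $V_i$. One cannot have $V_1\subseteq H$ and $V_2\subseteq H$ simultaneously, for then $H$ would contain $V_1+V_2=V$, which is impossible. Hence at least one of $H\cap V_1$, $H\cap V_2$ is a proper hyperplane, leaving three cases to consider.

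For each $i$ the count $|\mathcal{C}_i\cap H|$ equals $|\mathcal{C}_i|$ when $V_i\subseteq H$, and otherwise equals the number of points of $\mathcal{C}_i$ lying in the hyperplane $H\cap V_i$ of $V_i$; in the latter situation, Lemma~\ref{lemma_modulo_constraint_all_points} applied to the $q^r$-divisible set $\mathcal{C}_i$ gives $|\mathcal{C}_i\cap H|\equiv|\mathcal{C}_i|\pmod{q^r}$. Either way $|\mathcal{C}_i\cap H|\equiv|\mathcal{C}_i|\pmod{q^r}$ for $i=1,2$, and summing (using the disjointness of the embedded point sets, so that intersections with $H$ add without overlap) yields
\[
|\mathcal{C}\cap H|=|\mathcal{C}_1\cap H|+|\mathcal{C}_2\cap H|\equiv|\mathcal{C}_1|+|\mathcal{C}_2|=|\mathcal{C}|\pmod{q^r},
\]
which confirms that $\mathcal{C}$ is $q^r$-divisible with hyperplane residue $u\equiv|\mathcal{C}|\pmod{q^r}$. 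There is no serious obstacle here; the one point meriting attention is the combinatorial observation that a single hyperplane $H$ cannot absorb both direct summands, which is what localizes the argument so that Lemma~\ref{lemma_modulo_constraint_all_points} applies independently on each factor.
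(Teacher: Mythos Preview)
Your argument is correct. The paper, however, does not give a separate proof of this lemma; instead it derives the statement from the coding-theoretic description immediately preceding it: taking generator matrices in block-diagonal form $\mat{G}=\left(\begin{smallmatrix}\mat{G}_1'&\mat{0}\\\mat{0}&\mat{G}_2'\end{smallmatrix}\right)$, every codeword of $C_1\oplus C_2$ has the shape $(c_1\mid c_2)$ with $c_i\in C_i$, so its weight is $\operatorname{wt}(c_1)+\operatorname{wt}(c_2)$, a sum of two multiples of $q^r$. Projectivity and the dimension count are then immediate from the block structure.

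Your route stays purely in the geometric language of point sets and hyperplanes, invoking Lemma~\ref{lemma_modulo_constraint_all_points} on each summand after observing that $H\cap V_i$ is either all of $V_i$ or a hyperplane of $V_i$. This is a genuine alternative and has the virtue of not switching to the code picture; the price is a slightly longer case analysis (and, incidentally, the observation that $H$ cannot contain both $V_1$ and $V_2$ is true but not actually needed, since your congruence $|\mathcal{C}_i\cap H|\equiv|\mathcal{C}_i|\pmod{q^r}$ holds in either subcase). The paper's approach is shorter because the additivity of Hamming weight under concatenation makes the divisibility transparent in a single line.
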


Using Lemma~\ref{lemma_add_configurations} we can show that there is a well-defined function $\frobenius(q,r)$,
assigning to $q,r$ the largest integer $n$ that is not equal to the cardinality of a $q^r$-divisible set in $\aspace$, or is not 
equal to the length of a projective $q^r$-divisible linear code over $\mathbb{F}_q$. The problem of determining $\frobenius(q,r)$ is in 
some sense analogous to the well-known \emph{Frobenius Coin Problem}, see, e.g., \cite{brauer1942problem}). In its simplest form it 
asks for the largest integer not representable as $a_1n_1+a_2n_2$ with $a_1,a_2\geq 0$, where $n_1$ and $n_2$ are given relatively prime
positive integers. The solution is $(n_1-1)(n_2-1)-1$, as is easily shown. With this, Lemma~\ref{lemma_add_configurations}, 
Example~\ref{example_r_flat}, and Example~\ref{example_affine_r_space} imply:
\begin{lemma}
\label{lemma_frobenius}
For each integer $r\ge 1$ we have
\begin{equation}
  \label{eq:frob}
  \begin{aligned}
    \frobenius(q,r)&\leq\frac{q^{r+1}-1}{q-1}\cdot q^{r+1}
    -\frac{q^{r+1}-1}{q-1}-q^{r+1}\\
    &=q^{2r+1}+q^{2r}+\dots+q^{r+2}-q^r-q^{r-1}-\dots-1.
  \end{aligned}
\end{equation}
\end{lemma}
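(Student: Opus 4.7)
The plan is to combine the two explicit $q^r$-divisible sets from Example~\ref{example_r_flat} and Example~\ref{example_affine_r_space}, of respective cardinalities $n_1=\frac{q^{r+1}-1}{q-1}$ and $n_2=q^{r+1}$, using Lemma~\ref{lemma_add_configurations}, and then invoke the classical two-variable Frobenius coin theorem quoted in the statement. The statement itself is really a bound on $\frobenius(q,r)$ (proving along the way that this Frobenius-type number is in fact finite).

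First I would observe that by Lemma~\ref{lemma_add_configurations} applied iteratively, the set of cardinalities realized by some $q^r$-divisible set in some $\aspace$ is closed under addition. In particular, for all $a,b\in\mathbb{N}_0$, the integer $a n_1 + b n_2$ is the cardinality of a $q^r$-divisible set, obtained as the direct sum of $a$ copies of the $(r+1)$-flat from Example~\ref{example_r_flat} and $b$ copies of the punctured $(r+2)$-flat from Example~\ref{example_affine_r_space}.

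Next I would verify that $\gcd(n_1,n_2)=1$. Since $n_2=q^{r+1}$ is a prime power in $q$, it suffices to check $\gcd(n_1,q)=1$, which is immediate from $n_1 = 1+q+q^2+\dots+q^r \equiv 1\pmod q$. Consequently, by the two-variable Frobenius coin theorem (as quoted in the paragraph preceding the lemma), every integer strictly greater than $(n_1-1)(n_2-1)-1 = n_1 n_2 - n_1 - n_2$ is of the form $a n_1+b n_2$ with $a,b\ge 0$, and hence is realized as the cardinality of a $q^r$-divisible set. This shows that $\frobenius(q,r)$ is well-defined and bounded above by $n_1 n_2 - n_1 - n_2$.

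Finally I would substitute $n_1=\frac{q^{r+1}-1}{q-1}$ and $n_2=q^{r+1}$ and simplify: expanding $\frac{q^{r+1}-1}{q-1}\cdot q^{r+1} = q^{2r+1}+q^{2r}+\dots+q^{r+1}$ and then subtracting $\frac{q^{r+1}-1}{q-1} = q^r+q^{r-1}+\dots+1$ and $q^{r+1}$ gives the telescoped form $q^{2r+1}+q^{2r}+\dots+q^{r+2}-q^r-q^{r-1}-\dots-1$ on the right-hand side of~\eqref{eq:frob}. There is no real obstacle here; the only slightly non-routine point is noticing the coprimality of $n_1$ and $n_2$, which falls out immediately from $n_1\equiv 1\pmod q$.
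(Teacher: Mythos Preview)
Your proposal is correct and follows exactly the approach indicated in the paper: combine the $q^r$-divisible sets of cardinalities $n_1=\frac{q^{r+1}-1}{q-1}$ and $n_2=q^{r+1}$ via Lemma~\ref{lemma_add_configurations}, observe $\gcd(n_1,n_2)=1$, and apply the two-variable Frobenius formula $(n_1-1)(n_2-1)-1$. The paper merely states that the lemma follows from these ingredients; you have correctly supplied the missing details, including the coprimality check and the algebraic simplification.
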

So, indeed $\frobenius(q,r)$ is well-defined and we determine the first few exact values in Section~\ref{sec_exclusion}.


\subsection{MacWilliams identities and the linear programming method}
\label{subsec_linear_programming_method}

We remark that the standard equations from Lemma~\ref{lemma_standard_equations_q} have a natural generalization in the language 
of linear codes, see e.g.~\cite{huffman2010fundamentals}. To this end let $\mathcal{C}\subset \aspace$ and $\mathcal{L}$ denote 
the corresponding linear projective code 
over $\mathbb{F}_q$. Let $n$ denote the cardinality of $\mathcal{C}$, i.e., the length of $\mathcal{L}$, and $k$ denote 
the dimension of $\langle\mathcal{C}\rangle$ and $\mathcal{L}$. By $a_i$ we denote the number of hyperplanes having an intersection 
of cardinality $i$ with $\mathcal{C}$, by $A_i$ the number of codewords of $\mathcal{L}$ of weight $i$ and by $A_i^\perp$ the number 
of codewords of weight $i$ of the dual code $\mathcal{L}^\perp$, which has dimension $n-k$. The connection between the $a_i$ and $A_i$ 
is given by $A_i=(q-1)a_{n-i}$ for all $0<i\le n$, $A_0=1$, and $a_n=0$. The \emph{weight distribution} $(A_0^\perp,\dots,A_n^\perp)$ of the 
dual code $\mathcal{L}^\perp$ can be computed from the weight distribution $(A_0,\dots,A_n)$ of the (primal) code $\mathcal{L}$. 
One way\footnote{Another way uses the \emph{weight enumerator} $W_{\mathcal{L}}(x,y)=\sum_{i=0}^n A_i(\mathcal{L})y^ix^{n-i}$. With 
this, the weight enumerator of the dual code is given by $W_{\mathcal{C}^\perp}(x,y)=\frac{1}{|\mathcal{C}|}\cdot W_{\mathcal{C}}(x+(q-1)y,x-y)$. }
to write down the underlying relation are the so-called \emph{MacWilliams identities}:
\begin{equation}
  \label{mac_williams_identies}
  \sum_{j=0}^{n-\nu} {{n-j}\choose\nu} A_j=q^{k-\nu}\cdot \sum_{j=0}^\nu {{n-j}\choose{n-\nu}}A_j^\perp\quad\text{for }0\le\nu\le n, 
\end{equation} 
where, additionally, $A_0^\perp=1$. The fact that the $A_i^\perp$ are uniquely determined by the $A_i$ can e.g.\ be seen 
by providing explicit equations for each $A_i^\perp$ in dependence of the $A_j$. Those formulas involve the so-called 
\emph{Krawtchouk polynomials} \cite{krawtchouk1929generalisation}.    

We remark that we have $A_1^\perp=A_2^\perp=0$, since the minimum distance of the dual code is 
at least $3$, in our situation. With this, the first three equations of (\ref{mac_williams_identies}) are equivalent to the equations 
from Lemma~\ref{lemma_standard_equations_q}. 

Of course the $A_i$ and the $A_i^\perp$ in (\ref{mac_williams_identies}) have to be non-negative integers, which moreover are divisible by $q-1$. 
Omitting the integrality condition yields the so-called \emph{linear programming method}, see e.g.\ \cite[Section 2.6]{huffman2010fundamentals}, where the $A_i$ and $A_i^\perp$ 
are variables satisfying the mentioned constraints.\footnote{Typically, the $A_i^\perp$ are removed from the formulation using the 
explicit formulas based on the Krawtchouk polynomials, which may of course also be done automatically in the preprocessing step of 
a customary linear programming solver.} Given some further constraints on the weights of the code and/or the dual code, one may check 
whether the corresponding polyhedron is empty or contains non-negative rational solutions. In general, this is a very powerful 
approach\footnote{The work of Delsarte, see e.g.~\cite{delsarte1973algebraic,delsarte1998association}, tells us, that, essentially, 
there is some kind of linear programming bound if we have an underlying association scheme, see \cite{zieschang2006theory} for an introduction. 
Even if there is no underlying association scheme, but a product scheme, similar methods can be used, see e.g.~\cite{brouwer1998bounds} for 
an application in coding theory. Replacing linear programming by semidefinite programming, one might obtain even stronger results, see 
e.g. \cite{bachoc2013bounds} for an application on bounds for projective codes, c.f.\ also \cite{bachoc2008new,barg2013new}.} 
and was e.g.\ be used to compute bounds for codes with a given minimum distance, see \cite{delsarte1972bounds,lloyd1957binary}. Since 
the involved binomial coefficients grow rather quick, one has to take care of numerical issues.\footnote{This is no principal problem, 
see e.g.~\cite{applegate2007exact} or \cite[Section 1.4.1]{espinoza2006linear}.} In this paper, we will mostly 
consider a subset of the MacWilliam identities and use analytical arguments, see Section~\ref{sec_tools_to_exclude}.\footnote{The 
use of special polynomials, like we will do, is well known in the context of the linear programming method, see 
e.g.~\cite[Section 18.1]{bierbrauer2005introduction}.}    

In the case of $q^r$ divisible sets, we have $A_i=0$ if $q^r$ does not divide $i$. The resulting linear program is denoted by $\operatorname{LP}_m$. 
For $m=n+1$ we use the notation $\operatorname{LP}_m=\operatorname{LP}_\infty$. If $A_{iq^r}>0$ then there exists a $q^{r-1}$-divisible set 
$\mathcal{C}$ (contained in the corresponding hyperplane or alternatively the residual code to the corresponding codeword of that weight). If the 
existence of $\mathcal{C}$ is recursively excluded by the linear programming method, then we may introduce the constraint $A_{iq^r}=0$. The resulting 
enhanced linear program is denoted by $\operatorname{LP}_m^\star$. For $m=n+1$ we use the notation $\operatorname{LP}_m^\star=\operatorname{LP}_\infty^\star$. The tightest 
results are of course given by $\operatorname{LP}_\infty^\star$, which may be tighter than applying the linear programming method 
directly see Remark~\ref{remark_ILP_LP_RECURSIVE}. Since the dimension $k$ is a parameter in these formulations, those LPs principally 
have to be solved for all possible values for $k$. Trivially, we have $1\le k\le n$. Tighter bounds for $\Delta$-divisible 
codes are discussed in the literature, see e.g.~\cite{liu2006divisible,liu2006weights,liu2010binary,liu2011equivalence,ward1992bound} . 
By a relaxation it is possible to compute most of \textit{the information} of the first four MacWilliams identities by a single linear program.

\begin{lemma}
   \label{lemma_reduced_lp_certificate}
   We define the linear program $\widetilde{\operatorname{LP}}$ as
  \begin{eqnarray*}
  \min z\\
  {n\choose\nu}+\sum_{j=1}^{n-\nu} {{n-j}\choose\nu} A_j=y\cdot q^{3-\nu}\cdot {{n}\choose{n-\nu}}&&\text{for }0\le \nu\le 2\\
  {n\choose 3}+\sum_{j=1}^{n-3} {{n-j}\choose 3} A_j=y\cdot  {{n}\choose{n-3}}+x\\  
  x+z\ge 0\\
  y+z\ge 0\\
  A_j+z \ge 0 &&\text{for }1\le j\le n\\
  A_j\in\mathbb{R}&&\text{for }1\le j\le n\\
  x,y\in\mathbb{R}\\
  z\in\mathbb{R}_{\ge 0}
  \end{eqnarray*}
  If $\widetilde{\operatorname{LP}}$ is infeasible or has an optimum target value strictly larger than zero, then the first four MacWilliams 
  identities do not admit a non-negative real solution.
\end{lemma}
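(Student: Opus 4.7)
The plan is to read $\widetilde{\operatorname{LP}}$ as a feasibility test for the first four MacWilliams identities in which the unknown quantity $q^{k-3}$ is absorbed into the free variable $y$, the unknown product $q^{k-3}A_3^\perp$ is absorbed into the free variable $x$, and the scalar $z\ge 0$ serves as a uniform slack that measures how far the tuple $(A_1,\dots,A_n,x,y)$ is from being non-negative. The direction to establish is the contrapositive: if the first four MacWilliams identities admit a non-negative real solution for some dimension $k$, then $\widetilde{\operatorname{LP}}$ is feasible and its optimum value is exactly zero.

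First I would write out~(\ref{mac_williams_identies}) explicitly for $\nu=0,1,2,3$, substituting $A_0=1$, $A_0^\perp=1$, and $A_1^\perp=A_2^\perp=0$. For $\nu\le 2$ the right-hand side collapses to $q^{k-\nu}\binom{n}{n-\nu}$, since the only surviving dual contribution is from $A_0^\perp=1$; for $\nu=3$ it becomes $q^{k-3}\binom{n}{n-3}+q^{k-3}A_3^\perp$. Under the substitution $y:=q^{k-3}$ and $x:=q^{k-3}A_3^\perp$ the factor $q^{k-\nu}$ equals $q^{3-\nu}y$, and the four identities turn into exactly the four equality constraints of $\widetilde{\operatorname{LP}}$, with $x$ accounting for the $A_3^\perp$-term at $\nu=3$.

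Next, assume the first four MacWilliams identities have a non-negative solution for some $k$, i.e., there exist $A_1,\dots,A_n\ge 0$ and $A_3^\perp\ge 0$ satisfying them. Setting $y:=q^{k-3}>0$, $x:=q^{k-3}A_3^\perp\ge 0$, and $z:=0$, all equality constraints of $\widetilde{\operatorname{LP}}$ hold by construction, and the sign constraints $A_j+z\ge 0$, $x+z\ge 0$, $y+z\ge 0$, $z\ge 0$ are satisfied. Hence $\widetilde{\operatorname{LP}}$ is feasible with optimum value at most $0$; combined with $z\ge 0$ the optimum equals $0$. Contrapositively, if $\widetilde{\operatorname{LP}}$ is infeasible or its optimum is strictly positive, no such non-negative solution can exist, which is the claim.

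The only delicate step is the bookkeeping that matches the four equality constraints of $\widetilde{\operatorname{LP}}$ with the first four MacWilliams identities under the substitution $y=q^{k-3}$, $x=q^{k-3}A_3^\perp$; once this identification is made the rest is formal. I do not foresee a deeper obstacle: the construction is essentially a relaxation that eliminates the unknown dimension $k$ (and the unknown $A_3^\perp$) from consideration while still preserving any potential non-negative certificate, so that detecting $z>0$ in the minimization is equivalent to ruling out every such certificate simultaneously.
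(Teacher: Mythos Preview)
Your proof is correct and follows essentially the same approach as the paper: the paper's own argument consists of the single sentence identifying the substitutions $y=q^{k-3}$ and $x=q^{k-3}A_3^\perp=y\cdot A_3^\perp$, which is exactly the key identification you make. Your write-up simply fills in the surrounding logical structure (the contrapositive direction, the role of $z$ as a uniform non-negativity slack, and why the optimum equals $0$) that the paper leaves implicit.
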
    
\begin{proof}
  We have used the substitutions $y=q^{k-3}$ and $x=q^{k-3} \cdot A_3^\perp=y\cdot A_3^\perp$.  
\end{proof}
We remark that uncoupling the non-linear dependency between $x$ and $y$ may weaken the strength of the LP formulation. However, 
in almost all cases where $\operatorname{LP}_4$ yields a certificate for non-existence $\widetilde{\operatorname{LP}}$
yields it too. 
By $\widetilde{\operatorname{LP}}^\star$ we denote the resulting linear program if we incorporate the iterative exclusion of specific weights. 

Invoking the integrality of the $A_i$ and the $A_j^\perp$ may result in tighter constraints. Using the lattice point enumeration 
software \texttt{solvediophant},  which is based on the LLL algorithm \cite{lenstra1982factoring}, see \cite{wassermann2002attacking}, we also 
checked whether the MacWilliams identities have non-negative integer solutions for some parameters. We denote the corresponding 
integer linear programming formulation by $\operatorname{ILP}$ and $\operatorname{ILP}^\star$. A case where $\operatorname{ILP}$ 
could exclude a cardinality $n$ that can not be excluded by $\operatorname{LP}_\infty$ is described in Remark~\ref{remark_ILP_LP_RECURSIVE}. However, 
$\operatorname{LP}_\infty^\star$ is sufficient for this example. So far we do not any example where $\operatorname{ILP}^\star$ was strictly superior 
to $\operatorname{LP}_\infty^\star$.

What can be done beyond the (integer) linear programming method, i.e., the feasibility of the MacWilliams identities? 
Some generalizations of the MacWilliams identities can be found in \cite{dougherty2002macwilliams,greferath2000finite,
kim2005classification,shiromoto1996new,tang2016macwilliams}. 
Here we emphasize the so-called split weight enumerator, see e.g.~\cite{simonis1995macwilliams}. In the classification 
of optimal linear codes this enumerator and the corresponding linear programming technique was very successful, see 
e.g.~\cite{jaffe1996binary,jaffe1997brief,jaffe2000optimal}. 
For even or doubly-even codes some more restrictions on the weights can e.g.\ be found in \cite[Section 7.8]{huffman2010fundamentals}, 
see also \cite{guritman2001degree,simonis1994restrictions} for further generalizations. 


\subsection{The possible lengths of $q^r$-divisible multisets}
\label{subsec_characterization_q_r_multisets}

As mentioned, Definition~\ref{def_delta_divisible} directly translates to multisets. In order to describe a multiset $\mathcal{M}$ 
of points in $\aspace$ we can use a characteristic function $\chi$ that maps each point $P$ of $\aspace$ to an integer $\chi(P)$, 
which is the number of occurrences of $P$ in $\mathcal{M}$. With this, the cardinality $\#\mathcal{M}=|\mathcal{M}|$ of $\mathcal{M}$ 
is just the sum over $\chi(P)$ for all points $P$ in $\aspace$. For a hyperplane $H$ of $\aspace$ we denote by $|H\cap \mathcal{M}$ 
the cardinality of $\mathcal{M}$ restricted to $H$, i.e., the sum over $\chi(P)$ for all points $P$ in $H$. With this we can state, 
that a multiset $\mathcal{M}$ is $q^r$-divisible if we have $|H\cap\mathcal{M}|\equiv |\mathcal{M}|\pmod{q^r}$ for each hyperplane 
$H$ of $\aspace$. (Of course we can also define $\Delta$-divisible or use the indirect formulation of the existence of an integer $u$ 
with $|H\cap\mathcal{M}|\equiv u\pmod \Delta$ for all hyperplanes $H$.)

The $(r+1)$-flats, i.e., the sets of all points of an $(r+1)$-dimensional subspace in $\mathbb{F}_q^v$, or simplex codes in coding theoretic 
language are of course also valid examples for $q^r$-divisible multisets. If multiset $\mathcal{M}$ is $q^s$-divisible and described by 
a characteristic function $\chi$, then the characteristic function $\chi'$ defined by $\chi'(P)=q^t\cdot \chi(P)$ for every point 
$P$ of $\aspace$ corresponds to a $q^{s+t}$-divisible multiset $\mathcal{M}'$. Thus, for each $0\le i\le r$ there exists a $q^r$-divisible 
multiset of cardinality $\gaussm{r+1-i}{1}{q}\cdot q^i$. Since Lemma~\ref{lemma_add_configurations} is also valid for multisets of points 
instead of sets of points, for each non-negative integers $a_0,\dots a_r$ there exists a $q^r$-divisible multiset of cardinality
\begin{equation}
  \sum_{i=0}^r a_i\cdot\gaussm{r+1-i}{1}{q}\cdot q^i. 
\end{equation}  
Interestingly enough, every integer $n$ that cannot be represented as such a non-negative linear combination of the mentioned base examples 
can indeed not be the cardinality of a $q^r$-divisible multiset, see \cite[Theorem 1]{divisibleIEEE}. Moreover, the question whether such a 
representation exists can be answered by a fast and simple algorithm, see \cite[Algorithm 1]{divisibleIEEE}. So, the question which integers 
$n$ can be the cardinality of a $q^r$-divisible multiset of points in $\aspace$ is completely resolved for every positive integer $r$. The 
corresponding question remain unsolved for fractional values of $r$ and also for the cases of $q^r$-divisible sets of points, where only 
partial results are known, which is actually the topic of this paper.  


\section{Relations to other combinatorial objects}
\label{sec_relations}

In the previous two subsections we have formulated the classification of the possible length of $q^r$-divisible sets in $\aspace$, for some 
dimension $v$, as an interesting research problem and stated some examples of $q^r$-divisible sets. Additionally, we have shown that those 
objects are in direct correspondence to projective $q^r$-divisible linear codes, i.e., we are looking at a special class of $\Delta$-divisible 
codes, where the restrictions on $\Delta$ are not that strong as they seem to be at first sight. Even more, $q^r$-divisible sets or 
$q^r$-divisible multisets in $\aspace$ are related to several other combinatorial objects:
\begin{itemize}
  \item vector space partitions $\rightarrow$ Subsection~\ref{subsec_vector_space_partitions}
  \item partial $t$-spreads $\rightarrow$ Subsection~\ref{subsec_partial_t_spreads}
  \item subspace codes $\rightarrow$ Subsection~\ref{subsec_subspace_codes}
  \item subspace packings and coverings $\rightarrow$ Subsection~\ref{subsec_subspace_packings_coverings}
  \item orthogonal arrays $\rightarrow$ Subsection~\ref{subsec_orthogonal_arrays}
  \item $(s,r,\mu)$-nets $\rightarrow$ Subsection~\ref{subsec_nets}
  \item minihypers $\rightarrow$ Subsection~\ref{subsec_minihypers}
  \item two-weight codes $\rightarrow$ Subsection~\ref{subsec_two_weight_codes}
  \item optimal linear codes $\rightarrow$ Subsection~\ref{subsec_optimal_codes}
  \item $q$-analogs of group divisible designs $\rightarrow$ Subsection~\ref{subsec_qgdd}
  \item codes of nodal surfaces $\rightarrow$ Subsection~\ref{subsec_nodal_surfaces}
\end{itemize}

\subsection{Vector space partitions}
\label{subsec_vector_space_partitions}

A \emph{vector space partition} $\mathcal{P}$ of $\F{v}{q}$ is a collection of 
subspaces with the property that every non-zero vector is contained in a unique member of $\mathcal{P}$. If $\mathcal{P}$  contains 
$m_d$ subspaces of dimension $d$, then $\mathcal{P}$ is of type $k^{m_k}\dots 1^{m_1}$. We may leave out some of the cases with $m_d=0$. 
Subspaces of dimension~$1$ are called \emph{holes}. If there is at least one non-hole, then $\mathcal{P}$ is called non-trivial. 

\begin{lemma}
  \label{lemma_connection vsp}
  Let $\mathcal{P}$ be a vector space partition of type $t^{m_t}\dots s^{m_s}1^{m_1}$ of $\F{v}{q}$, where $n>t\ge s\ge 3$ Then, 
  the $1$-dimensional elements of $\mathcal{P}$ form a  $q^{s-1}$-divisible set.
\end{lemma}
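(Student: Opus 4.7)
The plan is to count points of a hyperplane $H$ by summing the contributions of the members of $\mathcal{P}$, and to observe that these contributions are all congruent to a single value modulo $q^{s-1}$ apart from the holes themselves.

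First I would use that $\mathcal{P}$ partitions the nonzero vectors of $\mathbb{F}_q^v$, so that for every hyperplane $H$ the points of $H$ are the disjoint union of the point sets of $U\cap H$ as $U$ runs through $\mathcal{P}$. For a $d$-dimensional $U\in\mathcal{P}$, either $U\subseteq H$ or $U\cap H$ is a $(d-1)$-dimensional subspace of $U$, so the number of points of $U\cap H$ is either $\gaussm{d}{1}{q}$ or $\gaussm{d-1}{1}{q}$. This gives the basic counting identity
\begin{equation*}
  \gaussm{v-1}{1}{q}=|\mathcal{C}\cap H|+\sum_{\substack{U\in\mathcal{P}\\ \dim U\ge s}} |U\cap H|_{\text{pts}},
\end{equation*}
since the hole contribution is exactly $|\mathcal{C}\cap H|$ (each hole not in $H$ contributes $0$ points and each hole in $H$ contributes $1$).

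The key observation I would then isolate as a small lemma is that, for every integer $d\ge s$,
\begin{equation*}
  \gaussm{d}{1}{q}\equiv\gaussm{d-1}{1}{q}\equiv\gaussm{s-1}{1}{q}\pmod{q^{s-1}}.
\end{equation*}
This is immediate from $\gaussm{e}{1}{q}=1+q+\dots+q^{e-1}$ once one notes that every summand $q^j$ with $j\ge s-1$ vanishes modulo $q^{s-1}$, provided $e-1\ge s-2$, i.e.\ $e\ge s-1$. Consequently, for every non-hole $U\in\mathcal{P}$ and every hyperplane $H$, the quantity $|U\cap H|_{\text{pts}}$ is congruent to $\gaussm{s-1}{1}{q}$ modulo $q^{s-1}$, regardless of whether or not $U\subseteq H$.

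Setting $M:=m_s+m_{s+1}+\dots+m_t$ for the total number of non-hole members of $\mathcal{P}$ and substituting in the counting identity yields
\begin{equation*}
  |\mathcal{C}\cap H|\equiv\gaussm{v-1}{1}{q}-M\cdot\gaussm{s-1}{1}{q}\pmod{q^{s-1}},
\end{equation*}
and the right-hand side is independent of $H$. By Definition~\ref{def_delta_divisible} this exhibits $\mathcal{C}$ as a $q^{s-1}$-divisible subset of $\PG{v-1}{q}$. There is no serious obstacle here: the only real step is the congruence of Gaussian binomials modulo $q^{s-1}$, which is a routine geometric series computation, and the rest is bookkeeping through the partition.
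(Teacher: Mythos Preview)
Your argument is correct. The key step---that for $d\ge s$ both $\gaussm{d}{1}{q}$ and $\gaussm{d-1}{1}{q}$ are congruent to $\gaussm{s-1}{1}{q}$ modulo $q^{s-1}$---is exactly right, and once this is in hand the hyperplane count gives the result immediately.

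Your route differs from the paper's. The paper writes two counting identities, one for the whole space and one for the hyperplane, expresses the total number $M$ of non-hole members as $lq^s+x$, and then multiplies each identity by $q-1$ to extract congruences $-x+(q-1)m_1\equiv -1\pmod{q^s}$ and $-x+(q-1)m_1'\equiv -1\pmod{q^{s-1}}$ before comparing them. Your approach is more direct: you work entirely in the hyperplane and observe that each non-hole block contributes the \emph{same} residue modulo $q^{s-1}$ whether or not it lies in $H$, so the ambient count is never needed. This is shorter and conceptually cleaner. The paper's detour buys a slightly stronger intermediate statement (a congruence for $m_1$ modulo $q^s$, not just $q^{s-1}$), but that is not used for the lemma as stated.
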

\begin{proof}
  Let $l,x\in\mathbb{N}_0$ with $\sum_{i=s}^t m_i =lq^s+x$ and $H$ be a hyperplane corresponding vector space partition of 
  type $t^{m_t'}\dots (s-1)^{m_{s-1}'}1^{m_1'}$ of $\F{v-1}{q}$. Counting the number of non-zero vectors in $\F{v}{q}$ and $H$ yields
  \begin{eqnarray*}
    \left(lq^s+x\right)\cdot\gaussm{s}{1}{q}+aq^s+m_1=\gaussm{v}{1}{q}
    \quad\text{and}\quad\\
    \left(lq^s+x\right)\cdot\gaussm{s-1}{1}{q}+a'q^{s-1}+m_1'=\gaussm{v-1}{1}{q}
  \end{eqnarray*}
  for some $a,a'\in\mathbb{N}_0$. $q-1$ times the first equation yields $-x+(q-1)m_1\equiv -1 \pmod{q^s}$ 
  and $q-1$ times the second equation yields $-x+(q-1)m_1'\equiv -1 \pmod{q^{s-1}}$, so that 
  $m_1\equiv m_1' \pmod {q^{s-1}}$. 
\end{proof}

For $s=2$ the two-dimensional elements of $\mathcal{P}$ in Lemma~\ref{lemma_connection vsp} might correspond to 
$1$-dimensional elements in $\mathcal{P}'$. If we distinguish those elements from the original $1$-dimensional elements form $\mathcal{P}$ 
lying in $H$, we see that the $1$-dimensional elements of $\mathcal{P}$ also form a $q^1$-divisible set.

\begin{lemma}
  \label{lemma_vsp_as_mod_regular_configuration_via_holes}
  Let $\mathcal{P}$ be a vector space partition of type $t^{m_t}\dots 2^{m_2}1^{m_1}$ of $\F{v}{q}$ for some integers $v>t\ge 2$. 
  Then, the $1$-dimensional elements of $\mathcal{P}$ form a $q^1$-divisible set.
\end{lemma}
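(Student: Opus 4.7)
I would adapt the argument of Lemma~\ref{lemma_connection vsp} to the borderline case $s=2$, the only new feature being that $2$-dimensional members of $\mathcal{P}$ can contribute to the $1$-dimensional elements of the vector space partition induced on a hyperplane $H$, and these must be carefully separated from the genuine holes of $\mathcal{P}$.

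Fix a hyperplane $H$ of $\mathbb{F}_q^v$. For each $d\ge 2$, decompose $m_d=\alpha_d+\beta_d$, where $\alpha_d$ counts the $d$-dimensional members of $\mathcal{P}$ contained in $H$ and $\beta_d$ those meeting $H$ in a $(d-1)$-dimensional subspace. Likewise write $m_1=\alpha_1+\gamma_1$, where $\alpha_1$ is the number of holes of $\mathcal{P}$ that lie inside $H$. Intersecting the members of $\mathcal{P}$ with $H$ yields a vector space partition $\mathcal{P}'$ of $H$ whose type parameters satisfy $m_d'=\alpha_d+\beta_{d+1}$ for $d\ge 2$ (with $\beta_{t+1}=0$) and $m_1'=\alpha_1+\beta_2$; the last identity is exactly where $2$-dimensional members of $\mathcal{P}$ not contained in $H$ appear as extra $1$-dimensional elements of $\mathcal{P}'$.

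Next I would double-count non-zero vectors in $\mathbb{F}_q^v$ and in $H$ using $\mathcal{P}$ and $\mathcal{P}'$:
\[
  \sum_{d\ge 1}m_d(q^d-1)=q^v-1,\qquad \sum_{d\ge 1}m_d'(q^d-1)=q^{v-1}-1.
\]
Reducing both equations modulo $q$ collapses every $q^d-1$ to $-1$ and yields $\sum_d m_d\equiv \sum_d m_d'\equiv 1\pmod{q}$, hence $\sum_d m_d\equiv\sum_d m_d'\pmod{q}$. Substituting the decompositions above, all $\alpha_d$ contributions match in the two sums, and the $\beta_d$ contributions telescope (the $\beta_2$ from $m_1'$ together with $\sum_{d\ge 3}\beta_d$ from $\sum_{d\ge 2}m_d'$ reassemble into $\sum_{d\ge 2}\beta_d$), so that the congruence collapses to $\gamma_1\equiv 0\pmod{q}$, i.e.\ $\alpha_1\equiv m_1\pmod{q}$. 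Since $\alpha_1=\bigl|H\cap\{\text{1-dimensional elements of }\mathcal{P}\}\bigr|$, this is exactly $q^1$-divisibility of the set of holes, with $u\equiv m_1\pmod q$.

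There is no real obstacle here; the only thing requiring care is the bookkeeping that keeps the $\beta_2$ contributions (which look like holes in $\mathcal{P}'$ but are not holes in $\mathcal{P}$) separated from $\alpha_1$ when writing down $m_1'$. Once that is tracked, the desired congruence drops out of the two elementary non-zero-vector counts reduced modulo $q$, in exactly the spirit of Lemma~\ref{lemma_connection vsp}.
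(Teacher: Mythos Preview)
Your proof is correct and follows essentially the same approach as the paper's: both arguments double-count non-zero vectors in $\mathbb{F}_q^v$ and in $H$, reduce modulo $q$, and carefully separate the genuine holes in $H$ from the $1$-dimensional pieces coming from $2$-dimensional members of $\mathcal{P}$ not contained in $H$ (the paper's $\tilde{m}_1'$ is your $\beta_2$). Your bookkeeping via the decomposition $m_d=\alpha_d+\beta_d$ and the telescoping of $\sum_{d\ge 2}(\beta_d-\beta_{d+1})=\beta_2$ is a slightly cleaner packaging than the paper's use of $\sum_{i\ge 2} m_i = lq^2+x$ and intermediate congruences modulo $q^2$, but the substance is identical.
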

\begin{proof}
  We denote the $1$-dimensional elements of $\mathcal{P}$ as \emph{holes} and choose $l,x\in\mathbb{N}_0$ with $\sum_{i=2}^t m_i =lq^2+x$. 
  Let $H$ by a hyperplane with corresponding vector space partition $\mathcal{P}'$ of type $t^{m_t'}\dots 1^{m_1'}$ of $\F{n-1}{q}$. 
  By $\tilde{m}_1'$ we denote the number of $2$-dimensional elements of $\mathcal{P}$ with $1$-dimensional intersection with $H$ and by 
  $\hat{m}_1'$ we denote the number of holes contained in $H$, i.e., $m_1'=\tilde{m}_1'+\hat{m}_1'$. Counting the number of non-zero vectors 
  in $\F{n}{q}$ and $H$ yields
  \begin{eqnarray*}
    \left(lq^2+x\right)\cdot(q+1)+aq^2+m_1=\gaussm{n}{1}{q}
    \quad\text{and}\quad\\
    \left(lq^2+x-\tilde{m}_1'\right)\cdot(q+1)+a'q^2+\tilde{m}_1'+\hat{m}_1'=\gaussm{n-1}{1}{q}
  \end{eqnarray*}
  for some $a,a'\in\mathbb{N}_0$. $q-1$ times the first equation yields $(q-1)m_1\equiv x-1 \pmod{q^2}$ and 
  $q-1$ times the second equation yields $(q-1)\hat{m}_1'\equiv x-1 \pmod{q}$, so that $m_1\equiv \hat{m}_1' \pmod {q}$.
\end{proof}

Thus, our subsequent results on the non-existence of $q^r$-divisible subsets of $\aspace$ will result in improved 
upper bounds on the maximum size $A_q(v,2t;t)$ of a partial $t$-spread of $\aspace$. More generally, we can conclude 
some non-existence results for vector space partitions, whose classification is an ongoing, very hard, major project, see e.g.\ 
\cite{el2009partitions,heden2009length,heden2012survey,heden2013supertail,lehmann2012some,seelinger2012partitions}. To that end, we 
mention the following theorem.

\begin{theorem} (Theorem 1 in \cite{heden2009length})
  \label{thm_length_of_tail}
  Let $\mathcal{P}$ be a vector space partition of type $t^{m_t}\dots {d_2}^{m_{d_2}}{d_1}^{m_{d_1}}$ of $\F{n}{q}$, 
  where $m_{d_2},m_{d_1}>0$ and $n_1=m_{d_1}$, $n_2=m_{d_2}$.
  \begin{enumerate}
    \item[(i)]   if $q^{d_2-d_1}$ does not divide $n_1$ and if $d_2<2d_1$, then $n_1\ge q^{d_1}+1$;
    \item[(ii)]  if $q^{d_2-d_1}$ does not divide $n_1$ and if $d_2\ge 2d_1$, then $n_1>2q^{d_2-d_1}$ or $d_1$ divides $d_2$ and 
                 $n_1=\left(q^{d_2}-1\right)/\left(q^{d_1}-1\right)$;
    \item[(iii)] if $q^{d_2-d_1}$ divides $n_1$ and $d_2<2d_1$, then $n_1\ge q^{d_2}-q^{d_1}+q^{d_2-d_1}$;
    \item[(iv)]  if $q^{d_2-d_1}$ divides $n_1$ and $d_2\ge 2d_1$, then $n_1\ge q^{d_2}$.
  \end{enumerate}   
\end{theorem}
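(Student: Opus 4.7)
My plan is to derive a single ``master congruence'' modulo $q^{d_2-d_1}$ for the number of $d_1$-dimensional members of $\mathcal{P}$ lying in a hyperplane, and then play it against hyperplane counting. The setup is the analogue of the proofs of Lemma~\ref{lemma_connection vsp} and Lemma~\ref{lemma_vsp_as_mod_regular_configuration_via_holes}. Writing $m_{d_i}^{\text{in}}(H)$ and $m_{d_i}^{\text{out}}(H)$ for the number of $d_i$-dimensional members of $\mathcal{P}$ lying inside, respectively outside, a hyperplane $H$, counting non-zero vectors in $\F{n}{q}$ and in $H$ and subtracting gives
\[
q^{n-1}=\sum_{i}m_{d_i}^{\text{out}}(H)\,q^{d_i-1}.
\]
Dividing by $q^{d_1-1}$ and reducing modulo $q^{d_2-d_1}$ (using $n>t\ge d_2$) forces $m_{d_1}^{\text{out}}(H)\equiv 0\pmod{q^{d_2-d_1}}$, i.e.\
\[
m_{d_1}^{\text{in}}(H)\equiv n_1\pmod{q^{d_2-d_1}}.
\]
In particular, in cases (i) and (ii) when $q^{d_2-d_1}\nmid n_1$, \emph{every} hyperplane contains at least one $d_1$-dimensional member of $\mathcal{P}$.

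For case~(i), I would fix one $d_1$-member $F\in\mathcal{P}$ and double-count pairs $(F',H)$ with $F'\ne F$ a $d_1$-member of $\mathcal{P}$ and $H$ a hyperplane containing $F'$. Since $F\cap F'=\{0\}$ and $d_2<2d_1$ forces $\dim\langle F,F'\rangle=2d_1$ (with $2d_1\le n$), the hyperplanes through $\langle F,F'\rangle$ number $\gaussm{n-2d_1}{1}{q}$, so the inequality ``each hyperplane avoiding $F$ contains at least one $d_1$-member'' reads
\[
(n_1-1)\left(\gaussm{n-d_1}{1}{q}-\gaussm{n-2d_1}{1}{q}\right)\;\ge\;\gaussm{n}{1}{q}-\gaussm{n-d_1}{1}{q}.
\]
Both sides telescope via $\gaussm{a}{1}{q}-\gaussm{a-d_1}{1}{q}=q^{a-d_1}\gaussm{d_1}{1}{q}$, and the inequality collapses to $n_1-1\ge q^{d_1}$, which is precisely~(i). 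The edge case $2d_1=n$ has to be treated separately but is easier, because then $\langle F,F'\rangle=\F{n}{q}$ and $F'$ already determines all the geometry.

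Case~(ii) needs a refinement: when $d_2\ge 2d_1$, two $d_1$-members can span a space of dimension $\le d_2$, and the double-count above is no longer as tight. I would group the $d_1$-members by the $d_2$-subspaces of $\F{n}{q}$ they pairwise generate: either no two of them share such a $d_2$-subspace (yielding the bound $n_1>2q^{d_2-d_1}$ via the same counting but replacing $2d_1$ by $d_2$), or they all lie in a common $d_2$-subspace $U$ as a partition of $U\setminus\{0\}$, which forces $d_1\mid d_2$ and $n_1=(q^{d_2}-1)/(q^{d_1}-1)$, the equality branch. For cases~(iii) and~(iv), where the master congruence becomes vacuous because $q^{d_2-d_1}\mid n_1$, I would instead apply the hereditary divisibility from Lemma~\ref{lemma_heritable} to the set $\bigcup F$ of $d_1$-members viewed as a subset of $\aspace$, and combine it with Bose--Burton-type lower bounds on covers of $\F{n}{q}$ by $d_1$-subspaces.

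The main obstacle I foresee is case~(ii): cleanly separating the ``spread'' equality branch $n_1=(q^{d_2}-1)/(q^{d_1}-1)$ from the strict inequality $n_1>2q^{d_2-d_1}$ requires pinning down the structure of two $d_1$-members lying in a common $d_2$-subspace and ruling out intermediate configurations, which is where the geometry gets delicate. The other cases reduce, essentially, to one master congruence plus one well-chosen double count.
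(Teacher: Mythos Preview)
The paper does not prove this theorem at all: it is quoted verbatim from Heden's paper \cite{heden2009length} and the only comment is that it ``is slightly improved and reproven in \cite{kurz2018heden} using the language of $q^r$-divisible sets.'' So there is no proof here to compare against, and your proposal has to stand on its own.

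Your master congruence $m_{d_1}^{\text{out}}(H)\equiv 0\pmod{q^{d_2-d_1}}$ is correct, and your double count for case~(i) is clean and complete; note though that the hypothesis $d_2<2d_1$ plays no role in that argument (you always have $\dim\langle F,F'\rangle=2d_1$ from $F\cap F'=\{0\}$), so what you have actually shown is $n_1\ge q^{d_1}+1$ whenever $q^{d_2-d_1}\nmid n_1$, which covers~(i) but is too weak for~(ii).

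The remaining cases are where the proposal breaks down. In case~(ii) your dichotomy ``either no two $d_1$-members share a $d_2$-subspace, or they all lie in a common $d_2$-subspace forming a spread'' is ill-posed: when $d_2\ge 2d_1$ \emph{every} pair $F,F'$ spans a $2d_1$-space, hence lies in many $d_2$-subspaces, so the first branch is empty, yet there is no reason all $d_1$-members must lie in a single $d_2$-subspace. Intermediate configurations are the rule, not the exception, and ``the same counting but replacing $2d_1$ by $d_2$'' has no meaning since $\dim\langle F,F'\rangle=2d_1$, not $d_2$. For cases~(iii) and~(iv) your sketch is too thin to evaluate: Lemma~\ref{lemma_heritable} applies to point sets, and while the union of the $d_1$-members is indeed $q^{d_2-1}$-divisible (this follows from your master congruence), you have not said which lower bound on such sets you intend to invoke, and ``Bose--Burton-type bounds on covers'' does not apply because the $d_1$-members do not cover $\F{n}{q}$. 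In short, you have one case out of four; the route taken in \cite{kurz2018heden} does go through $q^r$-divisibility of the tail point set, so that instinct is right, but the analysis needed for~(ii)--(iv) is substantially more than what you have outlined.
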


Theorem~\ref{thm_length_of_tail} is slightly improved and reproven in \cite{kurz2018heden} using the language of $q^r$-divisible sets. 
Generalizations of vector space partitions, where the theory of $q^r$-divisible sets or multisets can in principle be used, are studied 
in \cite{el2011lambda,heinlein2019generalized}.

\subsection{Partial $t$-spreads}
\label{subsec_partial_t_spreads}

A \emph{partial $t$-spread} in $\F{v}{q}$ is a collection of $t$-dimensional subspaces such that the non-zero vectors are covered at most 
once, i.e., a vector space partition of type $t^{m_t}1^{m_1}$. By $A_q(v,2t;t)$ we denote the maximum value of $m_t$\footnote{The more 
general notation $A_q(v,2t-2w;t)$ denotes the maximum cardinality of a collection of $t$-dimensional subspaces, 
whose pairwise intersections have a dimension of at most $w$, see Subsection~\ref{subsec_subspace_codes}.}.

For a long time the best upper bound for partial spreads was given by Drake and Freeman:
\begin{theorem} (Corollary 8 in \cite{nets_and_spreads})
  \label{thm_partial_spread_4}
  If $v=kt+r$ with $0<r<t$, then 
  $$
    A_q(v,2t;t)\le \sum_{i=0}^{k-1} q^{it+r} -\left\lfloor\theta\right\rfloor-1
    =q^r\cdot \frac{q^{kt}-1}{q^t-1}-\left\lfloor\theta\right\rfloor-1
    =\frac{q^{v}-q^r}{q^t-1}-\left\lfloor\theta\right\rfloor-1,
  $$
  where $2\theta=\sqrt{1+4q^t(q^t-q^r)}-(2q^t-2q^r+1)$.
\end{theorem}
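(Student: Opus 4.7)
The overall strategy is to combine Lemma~\ref{lemma_connection vsp}, which identifies the hole set of a partial $t$-spread as a $q^{t-1}$-divisible set, with the standard equations of Lemma~\ref{lemma_standard_equations_q} and a second-moment/integrality argument on the spread itself. Fix a partial $t$-spread $\mathcal{P}$ of size $m$ in $\F{v}{q}$ and let $\mathcal{H}\subseteq\PG{v-1}{q}$ be the corresponding hole set, of cardinality $n=\gaussm{v}{1}{q}-m\gaussm{t}{1}{q}$; by Lemma~\ref{lemma_connection vsp} it is $q^{t-1}$-divisible.

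For each hyperplane $H$, let $b_H$ denote the number of spread elements contained in $H$. A spread element contained in $H$ contributes $\gaussm{t}{1}{q}$ points to $H$, while one not contained in $H$ meets $H$ in a $(t-1)$-dim subspace and contributes $\gaussm{t-1}{1}{q}$ points. A short calculation gives
\[
  |\mathcal{H}\cap H| \;=\; n - q^{v-1} + (m-b_H)\,q^{t-1},
\]
which directly exhibits the $q^{t-1}$-divisibility predicted by Lemma~\ref{lemma_modulo_constraint_all_points}. Double-counting incidences of (spread element, hyperplane) and (pair of distinct spread elements, hyperplane)---using that two distinct spread elements have trivial intersection and hence span a $2t$-dimensional subspace---yields
\[
  \sum_{H} b_H \;=\; m\,\gaussm{v-t}{1}{q}, \qquad \sum_{H} b_H(b_H-1) \;=\; m(m-1)\,\gaussm{v-2t}{1}{q},
\]
where $\gaussm{v-2t}{1}{q}$ is interpreted as $0$ when $v<2t$. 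These two equations are also direct consequences of the first two standard equations for $\mathcal{H}$.

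The crucial step now uses the integrality $b_H\in\mathbb{Z}_{\ge 0}$. For every integer $d\ge 0$ and every $H$ we have $(b_H-d)(b_H-d-1)\ge 0$; summing over $H$ and substituting the identities above produces, for each $d$, the quadratic inequality
\[
  \gaussm{v-2t}{1}{q}\,m^2 \;-\; \Bigl(\gaussm{v-2t}{1}{q}+2d\,\gaussm{v-t}{1}{q}\Bigr)m \;+\; d(d+1)\,\gaussm{v}{1}{q} \;\ge\; 0.
\]
Since the leading coefficient is positive, $m$ must lie at or below the smaller root of the quadratic. Optimizing over $d$ (intuitively, $d$ should equal $\lfloor\sum_{H}b_H/\sum_{H}1\rfloor$) and substituting $v=kt+r$, combined with the identity $\gaussm{v-t}{1}{q}-q^{t}\gaussm{v-2t}{1}{q}=\gaussm{t}{1}{q}$, converts the smaller root into the stated upper bound; the irrational contribution $\theta$ is precisely what is left from the discriminant after factoring out $q^r\cdot(q^{kt}-1)/(q^t-1)$.

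The main obstacle will be the algebraic clean-up of the smaller root at the optimal value of $d$. Concretely, one has to identify the discriminant $(\gaussm{v-2t}{1}{q}+2d\gaussm{v-t}{1}{q})^2-4d(d+1)\gaussm{v-2t}{1}{q}\gaussm{v}{1}{q}$ with $1+4q^{t}(q^{t}-q^{r})$ (up to a perfect square factor) and to verify that the passage from ``$m$ less than the smaller root'' to ``$m\le q^r\cdot(q^{kt}-1)/(q^t-1)-\lfloor\theta\rfloor-1$'' is exactly the conversion of a real bound into an integer bound via the floor function. No new conceptual ingredient is needed beyond the three steps above; the difficulty is purely the arithmetic of Gaussian binomials under $v=kt+r$.
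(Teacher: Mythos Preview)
The paper does not prove this theorem; it is quoted as Corollary~8 of Drake--Freeman, whose argument goes through $(s,r,\mu)$-nets and the Bose--Bush second-moment method. The paper only remarks that Lemma~\ref{lemma_hyperplane_types_arithmetic_progression} gives an analogous inequality in the language of $q^{t-1}$-divisible sets and refers to \cite{honold2018partial,kurz2017packing} for the dictionary between the two viewpoints.

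Your sketch is the Bose--Bush argument carried out directly on the spread, and it is correct in outline. It is dual to applying Lemma~\ref{lemma_hyperplane_types_arithmetic_progression} to the hole set: your $b_H$ and the hole count $|\mathcal{H}\cap H|$ are affinely related via the identity you display, so your two moment sums are linear combinations of the first \emph{three} (not two) standard equations of Lemma~\ref{lemma_standard_equations_q}---the quadratic sum $\sum_H b_H(b_H-1)$ needs equation~(\ref{eq_ste3}). Either route yields the same bound; yours stays closer to the original Drake--Freeman reasoning and avoids introducing divisible sets, while the paper's formulation slots directly into its linear-programming framework and applies to $\Delta$-divisible sets that do not arise from spreads.

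One step you should not leave implicit: from ``the quadratic in $m$ has positive leading coefficient and is non-negative'' you only get that $m$ lies \emph{outside} the interval between the two roots. To conclude that $m$ is at or below the smaller root you must separately rule out $m$ exceeding the larger root for your chosen $d$, for instance via the trivial packing bound $m\le\gaussm{v}{1}{q}/\gaussm{t}{1}{q}$. This is routine but has to be said before the algebraic clean-up can begin.
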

Their result is based on the work of Bose and Bush \cite{bose1952orthogonal} and uses nets as crucial objects. We 
give a similar statement bases on $q^{t-1}$-divisible sets in Lemma~\ref{lemma_hyperplane_types_arithmetic_progression}. In 
both cases a quadratic polynomial plays an essential role. For the details on the relation between both methodes we refer the 
interested reader to \cite{honold2018partial,kurz2017packing}.

Using the parameterization $v=kt+r$ with $0\le r<t$, the cases $r=0,1$ are completely settled 
for a long time, see e.g.~\cite{beutelspacher1975partial}. The case $q=2$, $r=2$ was completely resolved 
\cite{spreadsk3,kurzspreads}. Then, a major breakthrough was obtained by N{\u{a}}stase and Sissokho:
\begin{theorem}(Theorem 5 in \cite{nastase2016maximum})
   If $v=kt+r$ with $0<r<t$ and $t>\gaussm{r}{1}{q}$, then $A_q(v,2t;t)=\frac{q^n-q^{t+r}}{q^t-1}+1$.
\end{theorem}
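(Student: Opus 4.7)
My plan is to establish the two matching bounds separately, with the upper bound carrying the bulk of the argument.

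Let $\mathcal{P}$ be a partial $t$-spread of $\F{v}{q}$ of size $s$, and let $\mathcal{N}$ denote its hole set of cardinality $n=\gaussm{v}{1}{q}-s\gaussm{t}{1}{q}$. By Lemma~\ref{lemma_connection vsp} (applied with $s=t$), $\mathcal{N}$ is a $q^{t-1}$-divisible set in $\PG{v-1}{q}$, and Lemma~\ref{lemma_modulo_constraint_all_points} then gives $n_H:=|\mathcal{N}\cap H|\equiv n\pmod{q^{t-1}}$ for every hyperplane $H$. Since $s\le(q^v-q^{t+r})/(q^t-1)+1$ is equivalent to $n=0$ or $n\ge q^t\gaussm{r}{1}{q}$, I would assume for contradiction $0<n<q^t\gaussm{r}{1}{q}$.

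Next, I partition the $q^{v-1}-1$ non-zero vectors of an arbitrary hyperplane $H$ by whether they lie in a spread element $U\subseteq H$ (there are $s_H$ of these), in the $(t-1)$-dimensional trace $U\cap H$ of a spread element $U\not\subseteq H$ (there are $s-s_H$ such), or in a hole. This yields
\[
n_H=\gaussm{v-1}{1}{q}-s\gaussm{t-1}{1}{q}-s_H q^{t-1}=n-q^{v-1}+(s-s_H)q^{t-1},
\]
and $n_H\ge 0$ forces the uniform cap $s_H\le s-\lceil(q^{v-1}-n)/q^{t-1}\rceil$. I then combine this with the double-counting identities $\sum_H s_H=s\gaussm{v-t}{1}{q}$ and $\sum_H\binom{s_H}{2}=\binom{s}{2}\gaussm{v-2t}{1}{q}$. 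A Cauchy--Schwarz / variance argument produces a quadratic inequality in $s$ that, in the absence of the cap, reproduces the Drake--Freeman bound of Theorem~\ref{thm_partial_spread_4}; inserting the $q^{t-1}$-divisibility cap sharpens this quadratic, and a direct computation should show that the sharpened quadratic is infeasible on $0<n<q^t\gaussm{r}{1}{q}$ precisely when $t>\gaussm{r}{1}{q}$, yielding the contradiction.

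For the matching lower bound I would use a Beutelspacher-type lifting together with a maximum rank-distance code. Decompose $\F{v}{q}=W\oplus V$ with $\dim W=t+r$ and $\dim V=(k-1)t$, fix any $t$-subspace $T_0\subseteq W$, and pick a $t$-spread of $V\cong\F{k-1}{q^t}$ of size $(q^{(k-1)t}-1)/(q^t-1)$. For each element $U'$ of this spread, a Singleton-optimal MRD code of size $q^{t+r}$ consisting of $\mathbb{F}_q$-linear maps $\phi\colon U'\to W$ (with pairwise differences injective on $U'$) produces $q^{t+r}$ pairwise-disjoint $t$-subspaces $\{(\phi(u),u):u\in U'\}$ of $W\oplus V$, each meeting $W$ only in $0$. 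Adjoining $T_0$ gives $q^{t+r}\cdot(q^{(k-1)t}-1)/(q^t-1)+1=(q^v-q^{t+r})/(q^t-1)+1$ pairwise disjoint $t$-subspaces.

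The main obstacle is executing the refined quadratic analysis cleanly enough that the threshold $t>\gaussm{r}{1}{q}$ emerges naturally; the ceiling $\lceil(q^{v-1}-n)/q^{t-1}\rceil$ has to be handled carefully near the boundary $n=q^t\gaussm{r}{1}{q}$, and the degenerate case $k=1$ (where $v=t+r$ leaves room for essentially one spread element) requires a separate, easier argument.
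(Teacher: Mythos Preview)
The paper does not prove this theorem; it is quoted from N{\u{a}}stase--Sissokho and only the divisible-codes reinterpretation is indicated (see the remarks around Theorem~\ref{thm_exclusion_q_r} and the reference to \cite{honold2018partial}). So there is no in-paper proof to compare against, but your upper-bound sketch has a genuine gap that is worth flagging.

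Your identity $n_H=n-q^{v-1}+(s-s_H)q^{t-1}$ already forces $n_H\equiv n\pmod{q^{t-1}}$ automatically; the $q^{t-1}$-divisibility of the hole set adds \emph{no} new constraint on the single-hyperplane quantities $s_H$ beyond $n_H\ge 0$. Consequently, the variance/Cauchy--Schwarz inequality you obtain from $\sum_H s_H$ and $\sum_H\binom{s_H}{2}$ together with that cap is exactly the Drake--Freeman computation and cannot be sharpened in the way you suggest; it will reproduce Theorem~\ref{thm_partial_spread_4}, not the N{\u{a}}stase--Sissokho bound. The stated threshold $t>\gaussm{r}{1}{q}$ does not emerge from this quadratic.

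The argument that does work---both in the original paper and in the divisible-codes language of \cite{honold2018partial,kurz2017packing}---is \emph{iterative}: apply Lemma~\ref{lemma_average} to find a hyperplane with a small hole count, observe that the restricted hole set is $q^{t-2}$-divisible by Lemma~\ref{lemma_heritable}, and repeat. After $t-1$ steps the divisibility is exhausted and one is left with a non-empty $q^0$-divisible set of cardinality strictly below $\gaussm{r}{1}{q}-(t-1)$, which is impossible precisely when $t>\gaussm{r}{1}{q}$. This is the mechanism behind Theorem~\ref{thm_exclusion_q_r}. Your lower-bound construction via lifted MRD codes, on the other hand, is correct and standard.
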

The underlying techniques could be used to obtain some improved upper bounds for partial spreads, see 
\cite[Lemma 10]{nastase2016maximum}, \cite[Theorems 6,7]{nastase2016maximumII}, and \cite[Theorems 2.9, 2.10]{kurz2017packing}. 
All these results can be explained in the language of $q^r$-divisible sets, see \cite{honold2018partial}. Indeed, 
all currently known best upper bounds for partial spreads, see e.g.~\cite{TableSubspacecodes}, can be obtained from 
non-existence results for $q^r$-divisible sets.

\subsection{Subspace codes}
\label{subsec_subspace_codes}
For two subspaces $U$ and $U'$ of $\aspace$ the \emph{subspace distance} is given by $d_S(U,U') = \dim (U+U') - \dim (U\cap U')$. 
A set $\mathcal{C}$ of subspaces in $\aspace$, called codewords, with minimum subspace distance $d$ is called a \emph{subspace code}. 
Its maximal possible cardinality is denoted by $A_q(v,d)$, see e.g.\ \cite{honold2016constructions}. If all codewords have the same dimension, 
say $k$, then we speak of a \emph{constant dimension code} and denote the corresponding maximum possible cardinality by $A_q(v,d;k)$, see 
e.g.~\cite{etzionsurvey}. For known bounds, we refer to \url{http://subspacecodes.uni-bayreuth.de} \cite{TableSubspacecodes} containing 
also the generalization to subspace codes of mixed dimension. As mentioned before, for $2k\le v$ the cardinality $A_q(v,2k;k)$ is the maximum 
size of a partial $k$-spread. For $d<2k$ the recursive Johnson bound 
$$A_q(v,d;k)\le \left\lfloor \gaussm{v}{1}{q} \cdot A_q(v-1,d;k-1)/\gaussm{k}{1}{q}\right\rfloor,$$  
see \cite{xia2009johnson}, recurs on this situation. The involved rounding can be slightly sharpened using the non-existence of $q^r$-divisible 
multisets of a certain cardinality, see \cite[Lemma 13]{divisibleIEEE}. For $d<2k$ this gives the tightest known upper bound for $A_q(v,d;k)$ 
except $A_2(6,4;3)=77<81$ \cite{honold2015optimal} and $A_2(8,6;4)=257< 289$ \cite{heinlein2019classifying}. For general subspace codes the 
underlying idea of the Johnson bound in combination with $q^r$-divisible multisets has been generalized in \cite{ubt_eref52236}. 

\subsection{Subspace packings and coverings}
\label{subsec_subspace_packings_coverings}
A constant-dimension code consisting of $k$-di\-men\-sional codewords in $\aspace$ has minimum subspace distance $d$ iff each 
$(k-\tfrac{d}{2}+1)$-dimensional subspace is contained in at most one codeword. If we relax the condition a bit and require that 
each $(k-\tfrac{d}{2}+1)$-dimensional subspace is contained in at most $\lambda$ codewords, then we have the definition of a 
\emph{subspace packing}. Of course, similar to constant-dimension codes, $q^r$-divisible multisets can be used to obtain 
upper bounds on the cardinality of a subspace packing, see \cite{ubt_eref48694,etzion2019subspace}. Indeed, 
\cite[Lemma 13]{divisibleIEEE} covers that case. 

If we replace {\lq\lq}contained in at most $\lambda$ codewords{\rq\rq} by {\lq\lq}contained in at least $\lambda$ codewords{\rq\rq} 
we obtain so-called \emph{subspace packings}. For the special case of $\lambda=1$ we refer e.g.\ to \cite{etzion2014covering,etzion2011q}. 
\cite[Lemma 13]{divisibleIEEE} also covers this situation and relates it to $q^r$-divisible multisets.

\subsection{Orthogonal arrays}
\label{subsec_orthogonal_arrays}

A $t-(v,k,\lambda)$ orthogonal array, where $t\le k$, is a $\lambda vt \times k$ array whose entries are chosen from a set 
$X$ with $v$ points such that in every subset of $t$ columns of the array, every $t$-tuple of points of $X$ appears in exactly $\lambda$ rows. 
Here, $t$ is called the strength of the orthogonal array. For a survey see e.g.\ \cite{hedayat2012orthogonal}. A library of orthogonal arrays 
can be found at \url{http://neilsloane.com/oadir/}. A variant of the linear programming method for orthogonal arrays with mixed levels was 
presented in \cite{sloane1996linear}. Orthogonal arrays can be regarded as natural generalizations of orthogonal Latin squares\cite{keedwell2015latin}, 
c.f.~\cite{bose1952orthogonal}. 

\subsection{$(s,r,\mu)$-nets}
\label{subsec_nets}

\begin{definition}(\cite[Definition 2]{nets_and_spreads})\\
  Let $J$ be an incidence structure. Define $B\parallel G$ for blocks $B$, $G$ of $J$ to mean that either $B = G$ or [B,G] = 0. Then $J$ is called an $(s,r,\mu)$-net provided: 
  \begin{enumerate}
    \item[(i)] $||$ is a parallelism; 
    \item[(ii)] $G\not\parallel H$ implies $[G,H] = \mu$; 
    \item[(iii)] there is at least one point, some parallel class has $s\ge 2$ blocks, and there are $r\ge 3$ parallel classes. 
  \end{enumerate}
\end{definition} 
We note that the existence of an $(s,r,\mu)$-net is equivalent to the existence of an orthogonal array of strength two, see Subsection~\ref{subsec_orthogonal_arrays}. 
From partial spreads $(s,r,\mu)$-nets can be constructed, see \cite{nets_and_spreads}. Additionally, there is a connection between $3$-nets and Latin squares, see 
e.g.~\cite[Section 8.1]{keedwell2015latin}. 

Nets can be seen as a relaxation of a finite projective plane, see e.g.~\cite{ostrom1968vector}. For the famous existence question of finite projective planes of 
small order we refer to \cite{lam1991search,perrott2016existence}.

\subsection{Minihypers}
\label{subsec_minihypers}

An $(f,m;v,q)$-minihyper is a pair $(F,w)$, where $F$ is a subset of the point set of $\aspace$ and $w$ is a weight function 
$w\colon \aspace\to\mathbb{N}$, $x\mapsto w(x)$, satisfying
\begin{enumerate}
  \item[(1)] $w(x)>0$ $\Longrightarrow$ $x\in F$, 
  \item[(2)] $\sum_{x\in F} w(x)=f$, and
  \item[(3)] $\min\{\sum_{x\in H} w(x) \mid H\in\mathcal{H}\}=m$, where $\mathcal{H}$ is the set of hyperplanes of $\aspace$.
\end{enumerate}
The set of holes $\mathcal{P}$, i.e., uncovered points, of a partial $k$-spread is a $q^{k-1}$-divisible set, i.e., we have 
$\#(H\cap \mathcal{P})\equiv u\pmod {q^{k-1}}$ for some integer $u$ and each hyperplane $H\in\mathcal{H}$. Thus, $\mathcal{P}$ 
corresponds to a minihyper with $m=u$. 

Minihypers have e.g.\ been used to prove extendability results for partial spreads, see e.g.~\cite{ferret2003results,govaerts2002particular,govaerts2003particular}. 
If $\mathcal{P}$ is the set of holes of a partial $k$-spread, then the partial spread is extendible iff $\mathcal{P}$ contains of points of 
a $k$-dimensional subspace. As an example, in \cite{honold2019classification} the possible hole configurations of partial $3$ spreads in $\mathbb{F}_2^7$ 
of cardinality $16$ were classified. In four cases the partial spread is extensible and in one case it is not. In a similar vein, but using more sophisticated 
arguments, extendability results for constant-dimension codes can be found in \cite{nakic2016extendability}. 

Minihypers were used to study codes meeting the Griesmer bound, see e.g.~\cite{hamada1993characterization,hill2007geometric}. A close relation 
between divisible sets and minihypers can be found in \cite{landjev2016extendability}.

\subsection{Two-weight codes}
\label{subsec_two_weight_codes}

A linear $[n,k]_q$ code $C$ is called a \emph{two-weight code} if the non-zero codewords of $C$ attain just two possible weights. 
An online-table for known two-weight codes is at \url{http://www.tec.hkr.se/~chen/research/2-weight-codes/} and an exhaustive 
survey can be found in \cite{calderbank1986geometry}. Due to their prominence a lot of research has been done on two-weight codes 
and many examples are available. So, we have used quite some two-weight codes as examples of $q^r$-divisible sets in Section~\ref{sec_exclusion}, 
see e.g.\ Lemma~\ref{lemma_picture_q_2_r_4} and Lemma~\ref{lemma_picture_q_2_r_5}, which take examples from e.g.\ \cite{kohnert2007constructing_twoweight}.
Moreover, projective two-weight codes automatically satisfy some divisibility:
\begin{lemma}(\cite[Corollary 2]{delsarte1972weights})\\
\label{lemma_delsarte}
Let $C$ be a projective $2$-weight code over $\mathbb{F}_q$, where $q=p^e$ for some prime $p$. Then there exist suitable integers $u$ and $t$ with
  $u\ge 1$, $t\ge 0$ such that the weights are given by $w_1=up^t$ and $w_2=(u+1)p^t$.
\end{lemma}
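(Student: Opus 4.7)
The plan is to split the argument into two parts: first I would establish that $w_2-w_1$ is a power of $p$ via a local double-counting argument in $\PG{k-1}{q}$, and then I would use the first two standard equations of Lemma~\ref{lemma_standard_equations_q} to show that the same $p$-power must also divide $w_1$.

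\emph{Step 1 (the weight gap is a power of $p$).} Let $\mathcal{C}\subset\PG{k-1}{q}$ be the projective system associated to $C$, so $|\mathcal{C}|=n$, and write the two weights as $w_1<w_2$. The two-weight hypothesis translates to $|\mathcal{C}\cap H|\in\{n-w_1,n-w_2\}$ for every hyperplane $H$. Fix a point $P\in\PG{k-1}{q}$ and let $a$ denote the number of hyperplanes through $P$ meeting $\mathcal{C}$ in $n-w_1$ points, and $b=\gaussm{k-1}{1}{q}-a$ the complementary count. Double-counting $\sum_{H\ni P}|\mathcal{C}\cap H|$ yields $\gaussm{k-1}{1}{q}+(n-1)\gaussm{k-2}{1}{q}$ if $P\in\mathcal{C}$ and $n\gaussm{k-2}{1}{q}$ if $P\notin\mathcal{C}$. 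Combining these with $a+b=\gaussm{k-1}{1}{q}$ and subtracting the two cases (using $a,a'$ respectively) a short calculation gives
\[
(a-a')(w_2-w_1)=\gaussm{k-1}{1}{q}-\gaussm{k-2}{1}{q}=q^{k-2}.
\]
Since $a,a'\in\mathbb{Z}$ and $q=p^e$, this forces $w_2-w_1=p^t$ for some $0\le t\le e(k-2)$. Both point types exist because $C$ cannot be the simplex code (the only case with $\mathcal{C}=\PG{k-1}{q}$), which has a single non-zero weight and is therefore not a two-weight code.

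\emph{Step 2 ($p^t$ divides $w_1$).} Let $h_i$ denote the number of hyperplanes $H$ with $|\mathcal{C}\cap H|=n-w_i$. The first two standard equations of Lemma~\ref{lemma_standard_equations_q} specialize to
\[
h_1+h_2=\gaussm{k}{1}{q},\qquad w_1h_1+w_2h_2=nq^{k-1}.
\]
Eliminating $h_1$ via $w_2=w_1+p^t$ and clearing the denominator $q-1$ gives
\[
p^t(q-1)h_2=nq^{k-1}(q-1)-w_1(q^k-1).
\]
The left side and the first term on the right are both divisible by $p^t$ (using $t\le e(k-2)\le e(k-1)$), hence $p^t\mid w_1(q^k-1)$. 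Since $\gcd(p,q^k-1)=1$, this forces $p^t\mid w_1$. Writing $w_1=up^t$ with $u\ge 1$ then gives $w_2=(u+1)p^t$, as claimed.

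The main obstacle is pinning down the \emph{exact} constant $q^{k-2}$ in Step~1: the double counting must be done uniformly enough that the only surviving discrepancy between $P\in\mathcal{C}$ and $P\notin\mathcal{C}$ is the indicator of $P\in\mathcal{C}$ itself, and one must separately rule out the degenerate simplex-code case to ensure that both point types actually occur. Once that divisibility is established, Step~2 reduces to an elementary $p$-adic calculation using only two of the standard equations, and the rest of the MacWilliams machinery from Subsection~\ref{subsec_linear_programming_method} is not needed.
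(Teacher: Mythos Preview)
The paper does not give its own proof of this lemma; it is quoted directly from Delsarte \cite{delsarte1972weights} and left as a citation. So there is nothing to compare against.

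Your argument is correct and is essentially the standard geometric proof of Delsarte's result. A few remarks on details: in Step~1 your double count is accurate (for $P\in\mathcal{C}$ one obtains $aw_1+bw_2=(n-1)q^{k-2}$, for $P\notin\mathcal{C}$ one obtains $a'w_1+b'w_2=nq^{k-2}$, and subtracting with $b-b'=-(a-a')$ yields $(a-a')(w_2-w_1)=q^{k-2}$), and the simplex-code exclusion is exactly what is needed to guarantee a point $P\notin\mathcal{C}$. In Step~2 the computation $w_1\gaussm{k}{1}{q}+p^t h_2=nq^{k-1}$ is right, and since $t\le e(k-2)\le e(k-1)$ the term $nq^{k-1}(q-1)$ is divisible by $p^t$, whence $p^t\mid w_1(q^k-1)$ and $\gcd(p,q^k-1)=1$ finishes it. The only cosmetic slip is the phrase ``eliminating $h_1$ via $w_2=w_1+p^t$''; you are eliminating $w_2$, not $h_1$.
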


As we will mention in Section~\ref{sec_constructions}, there is also some literature on three-weight codes, see e.g.\ \cite{calderbank1984three,ding2015class,
heng2015several,kiermaier2019three,zhou2014class}, which yield $q^r$-divisible codes in many cases, while the relation is not that strong as in Lemma~\ref{lemma_delsarte}.

\subsection{Optimal codes}
\label{subsec_optimal_codes}
Among optimal linear codes, i.e., those $[n,k,d]_q$ codes that achieve the maximum possible minimum distance $d$, there are quite some $q^r$-divisible 
codes with \textit{interesting} parameters, see 
e.g.\ the examples in the proof of Lemma~\ref{lemma_picture_q_3_r_2}. This phenomenon can partially be explained by our search technique screening 
the lists of available optimal linear codes and checking them for divisibility. Our sources were 
\url{http://www.codetables.de/} maintained by Markus Grassl, \url{http://mint.sbg.ac.at/} maintained at 
the university of Salzburg, and the database of \textit{best known linear codes} implemented in \texttt{Magma}. We refer to the latter database 
whenever we mention an optimal linear code in Section~\ref{sec_exclusion} without explicitly stating a reference.   
Another reason may be given by \cite[Theorem 1]{ward1998divisibility} stating that $[n,k,d]_p$ code meeting the 
Griesmer bound is $p^r$-divisible if $p^r$ divides the minimal distance $d$. Also in the cases where this result does not apply some optimal linear codes 
are $q^r$-divisible for some $r>1$ nevertheless. An interesting example is given by the $[46,9,20]_2$ code found in \cite{kurz201946}. It is optimal, unique, 
and does not have non-trivial authomorphisms. So, heuristic searches prescribing automorphisms had to be unsuccessful for this example. An alternative 
heuristic might be to assume $q^r$-divisibility for the largest possible $r$ so that the minimum distance $d$ is divisible by $q^r$, just to reduce 
the search space to a more manageable size. 

\subsection{$q$-analogs of group divisible designs}
\label{subsec_qgdd}
A \emph{$q$-analog of a group divisible design} of index $\lambda$  and
order $v$ is a triple $(\mathcal{V}, \mathcal{G}, \mathcal{B})$, where
\begin{itemize}
\item[--] $\mathcal{V}$ is a vector space over $\mathbb{F}_q$ of dimension $v$,
\item[--] $\mathcal{G}$ is a vector space partition whose dimensions lie in $G$, and
\item[--] $\mathcal{B}$ is a family  of subspaces (blocks) of $\mathcal{V}$,
\end{itemize}
that satisfies
\begin{enumerate}
\item $\#  \mathcal{G} > 1$,
\item if $B\in \mathcal{B}$ then $\dim B \in K$,
\item every $2$-dimensional subspace of $\mathcal{V}$ occurs in exactly  $\lambda$ blocks or one group, but not both.
\end{enumerate}

This notion was introduced in \cite{ubt_eref48691} and generalizes the classical definition of a group divisible design in the 
set case, see e.g.~\cite{brouwer1977group}. All necessary existence conditions of the set case can be easily transfered to the 
$q$-analog case. Moreover, there is an additional necessary existence condition based on $q^r$-divisible multisets, see 
\cite[Lemma 5]{ubt_eref48691} for the details.

\subsection{Codes of nodal surfaces}
\label{subsec_nodal_surfaces}

In algebraic geometry, a nodal surface is a surface in (usually complex) projective space whose only singularities are nodes. A
major problem about them is to find the maximum number of nodes of a nodal surface of given degree. In \cite{beauville1979nombre} to each 
such nodal surface is assigned. Using this link it was shown in \cite{jaffe1997sextic} that a sextic surface can have at most $65$ nodes.
Those codes are either doubly-even or triply even, depending on whether the degree of the surface is odd or even, see \cite{catanese1981babbage}.


\section{Constructions for $\mathbf{q^r}$-divisible subsets of $\mathbf{\aspace}$}
\label{sec_constructions}

In this section we want to complement the two examples and Lemma~\ref{lemma_add_configurations} from Subsection~\ref{subsec_examples_q_r_divisible}  
with further parametric families of $q^r$-divisible sets. More constructions can also be found in \cite{ubt_eref40887}. 

\begin{example}
  \label{example_projective_basis}
  A \emph{projective basis} is a set of $v+1\ge 3$ points in $\aspace$ such that the span of each $v$ points has dimension $v$.   
  Up to isomorphism, a projective basis is unique and can be parameterized as 
  $\mathcal{C}=\{\langle e_1\rangle,\dots,\langle e_v\rangle,\langle e_1+\ldots+e_v\rangle\}$,  
  where $e_i$ denote the unit vectors. For $q=2$, we have $a_{m-2j+1}={m\choose {2j-1}}+{m\choose {2j}}$\footnote{We set 
  ${m \choose {m+1}}=0$. The $a_i$ can be computed by considering hyperplanes where the corresponding normal vector consists 
  of exactly $i$ ones for $1\le i\le m$.} for $1\le j\le \frac{m+1}{2}$ and $a_i=0$ for all other indices $i$, i.e., the 
  numbers of points in the hyperplanes either are all even or they are all odd. 
\end{example}

\begin{example}
  \label{example_projective_basis_factor_geometry_line} 
  For $m\ge 2$ let $\mathcal{C}\subseteq\aspace$ be given by the union of $m$ lines through a common point $P$ such that 
  the images of the lines in the factor geometry $\F{v}{q}/P$ form a projective basis. We have $\dim(\langle\mathcal{C}\rangle)=m$ 
  and $n=|\mathcal{C}|=mq+1$. Since each hyperplane $H$ intersects a line in exactly one or $q+1$ points, we have 
  $|H\cap\mathcal{C}|\equiv 1\pmod{q}$ if $P\in H$. If $P\notin H$, then $|H\cap\mathcal{C}|=m$, i.e., $\mathcal{C}$ is $q$-divisible 
  iff $m\equiv 1\pmod{q}$. 
\end{example}

\begin{example}
  \label{example_projective_basis_factor_geometry_line_minus_point} 
  For $m\ge 2$ let $\mathcal{C}'\subseteq\aspace$ be given by the union of $m$ lines through a common point $P$ such that 
  the images of the lines in the factor geometry $\F{v}{q}/P$ form a projective basis. Set $\mathcal{C}=\mathcal{C}'\backslash P$. 
  We have $\dim(\langle\mathcal{C}\rangle)=m$ and $n=|\mathcal{C}|=mq$. Since each hyperplane $H$ intersects a line in exactly one 
  or $q+1$ points, we have $|H\cap\mathcal{C}|\equiv 0\pmod{q}$ if $P\in H$. If $P\notin H$, then $|H\cap\mathcal{C}|=m$, i.e., 
  $\mathcal{C}$ is $q$-divisible iff $m\equiv 0\pmod{q}$. 
\end{example}

Example~\ref{example_projective_basis_factor_geometry_line} and Example~\ref{example_projective_basis_factor_geometry_line_minus_point} 
can be generalized.

\begin{example}
  \label{example_construction_michael_1}
  Let $\overline{C}$ be a $q^r$-divisible set in $\aspace$ of length $\overline{n}$ and dimension $\overline{k}$. We construct a set 
  $\mathcal{C}$ of dimension $k=\overline{k}+1$. To this end let $P$ be a point outside the span of $\overline{\mathcal{C}}$. For each 
  point $Q\in\overline{\mathcal{C}}$ we add the line through $P$ and $Q$ to $\mathcal{C}$. The cardinality of $\mathcal{C}$ is given by 
  $1+q\overline{n}$. For a hyperplane $H$ we have $q^r \,|\, \overline{n}-|H\cap\overline{\mathcal{C}}|$. If $P\in H$, then 
  $|H\cap\mathcal{C}|=1+q\cdot|H\cap \overline{C}|$, so that $q^{r+1}$ divides $n-|H\cap\mathcal{C}|
  =1+q\overline{n}-(1+q\cdot|H\cap\overline{\mathcal{C}}|)=q\cdot(\overline{n}-H\cap\overline{\mathcal{C}})$. 
  If $P\notin H$, then $|H\cap \mathcal{C}|=\overline{n}$, i.e., $\mathcal{C}$ is $q^{r+1}$-divisible 
  iff $(q-1)\overline{n}\equiv -1\pmod{q^{r+1}}$, which equivalent to $\overline{n}\equiv \gaussm{r+1}{1}{q}\pmod{q^{r+1}}$. 
\end{example}

\begin{example}
  \label{example_construction_michael_2}
  Let $\overline{C}$ be a $q^r$-divisible set in $\aspace$ of length $\overline{n}$ and dimension $\overline{k}$. We construct a set 
  $\mathcal{C}$ of dimension $k=\overline{k}+1$. To this end let $P$ be a point outside the span of $\overline{\mathcal{C}}$. For each 
  point $Q\in\overline{\mathcal{C}}$ we add the line through $P$ and $Q$ without point $P$ to $\mathcal{C}$. The cardinality of 
  $\mathcal{C}$ is given by $q\overline{n}$. For a hyperplane $H$ we have $q^r \,|\, \overline{n}-|H\cap\overline{\mathcal{C}}|$. 
  If $P\in H$, then $|H\cap\mathcal{C}|=q\cdot|H\cap \overline{C}|$, so that $q^{r+1}$ divides $n-|H\cap\mathcal{C}|
  =q\overline{n}-(q\cdot|H\cap\overline{\mathcal{C}}|)=q\cdot(\overline{n}-H\cap\overline{\mathcal{C}})$. 
  If $P\notin H$, then $|H\cap \mathcal{C}|=\overline{n}$, i.e., $\mathcal{C}$ is $q^{r+1}$-divisible 
  iff $(q-1)\overline{n}\equiv 0\pmod{q^{r+1}}$, which is equivalent to $\overline{n}\equiv 0\pmod{q^{r+1}}$. 
\end{example}

Example~\ref{example_construction_michael_1} and Example~\ref{example_construction_michael_2} can be iteratively applied, since the assumptions 
are automatically satisfied.

\begin{example}
  \label{example_ovoid}
  An \emph{ovoid} in $\PG{3}{q}$ is a set $\mathcal{C}$ of $q^2+1$ points, no three collinear, such that every hyperplane 
  contains $1$ or $q+1$ points, i.e., $\mathcal{C}$ is $q$-divisible. Ovoids exist for all $q>2$, see e.g.\ \cite{o1996ovoids}.   
\end{example}

From \emph{two-weight codes}, see \cite{calderbank1986geometry} for an overview, with weights $w_1>w_2$, we obtain 
$(w_1-w_2)$-divisible codes. Several of the known two-weight codes give rise to $q^r$-divisible sets, see 
\cite[Section 13]{calderbank1986geometry}. 

\begin{example}
  Let $\mathcal{C}$ be a projective $q^r$ divisible code over $\mathbb{F}_q$. Considering the 
  a generator matrix of $\mathcal{C}$ as a generator matrix over $\mathbb{F}_{q^s}$ yields 
  a $\Delta$-divisible code over $\mathbb{F}_{q^s}$, where $\Delta=q^{r-s+1}$.
\end{example}

There is also some literature on three-weight codes, see e.g.\ \cite{zhou2014class}. For each odd prime $p$ and each 
integer $t\ge 1$ \cite[Theorem 3.7]{zhou2014class} provides a $p^t$-divisible set over $\mathbb{F}_p$ of cardinality 
$n=p^{2t+1}-p^{2t}+\frac{p-1}{2}\cdot p^t$ and dimension $k=4t+2$. For each odd prime $p$ and each integer $t\ge 3$ 
\cite[Theorem 3.11]{zhou2014class} provides a $p^t$-divisible set over $\mathbb{F}_p$ of cardinality 
$n=p^{2t}-p^{2t-1}+(p-1)\cdot p^t$ and dimension $k=4t$. In general, cyclic codes are worthwhile  field of study. 
Due to their special structure they can be enumerated for large parameters and it commonly happens that their weights 
are $q^r$-divisible. Even in the cases where not all weights are $q^r$-divisible, see e.g.~\cite{heng2015several}, 
the codes may be expanded to $q^r$-divisible codes.  

Let us revisit Lemma~\ref{lemma_add_configurations} again. 
Packing $\mathcal{C}_1$ in $\PG{v_1-1}{q}$ and $\mathcal{C}_2$ in an orthogonal $\PG{v_2-1}{q}$ ensures that the points of the two sets 
end up in different points in $\mathcal{C}$, i.e., are disjoint. Now let $\mathcal{B}$ be some $k$-dimensional subspace. If one can arrange 
$\mathcal{C}_1\cap \mathcal{B}$ and $\mathcal{C}_2\cap \mathcal{B}$ in $\mathcal{B}$ in a disjoint way, then one gets a $q^r$ divisible 
set $\mathcal{C}$ of cardinality $\left|\mathcal{C}_1\right|+\left|\mathcal{C}_2\right|$ with $\dim(\mathcal{C})= 
\dim(\langle\mathcal{C}_1\rangle)+\dim(\langle\mathcal{C}_2\rangle)-k$. We denote this construction by $\mathcal{C}_1\oplus_{-k}\mathcal{C}_2$ 
and remark  
$$\max\{\dim(\langle\mathcal{C}_1\rangle),\dim(\langle\mathcal{C}_2\rangle)\}
  \le \dim(\langle\mathcal{C}_1\oplus_{-k}\mathcal{C}_2\rangle)\le \dim(\langle\mathcal{C}_1\rangle)+\dim(\langle\mathcal{C}_2\rangle)$$
for all $k\in\mathbb{N}_0$.  

\begin{remark}
  \label{remark_save_dimension}
  Let $\mathcal{C}_1\in \PG{v_1-1}{q}$ and $\mathcal{C}_2\in \PG{v_2-1}{q}$ be $q^r$-divisible sets and $\mathcal{B}$ be a $k$-dimensional 
  subspace, where $0\le k\le \min\{v_1,v_2\}$. If $\mathcal{C}_1\cap\mathcal{B}=\emptyset$ (embedding $\mathcal{B}$ in $\PG{v_1-1}{q}$), then 
  $\mathcal{C}_1\oplus_{-k}\mathcal{C}_2$ is a $q^r$-divisible set of cardinality $|\mathcal{C}_1|+|\mathcal{C}_2|$ (if the twist of 
  $\mathcal{C}_1$ and $\mathcal{C}_2$ in $\mathcal{B}$ is suitably chosen) and dimension $\dim(\langle\mathcal{C}_1\rangle)+\dim(\langle\mathcal{C}_2\rangle)-k$. 
  If $\mathcal{C}_1$ and $\mathcal{C}_2$ are $q^r$-divisible and not both of them are subspaces entirely consisting of holes, then 
  choosing $k=1$ is always possible.  
\end{remark}

\begin{remark}
  \label{remark_combine_flat_and_affine_space} 
  Combining Example~\ref{example_r_flat} with Example~\ref{example_affine_r_space} gives $q^r$-divisible sets of cardinality $\gaussm{r+2}{1}{q}$ 
  and dimensions between $r+2$ and $2r+3$.
\end{remark}

\begin{remark}
  \label{remark_combine_two_affine_spaces} 
  Combining Example~\ref{example_affine_r_space} with itself gives $q^r$-divisible sets of cardinality $2q^{r+1}$ 
  and dimensions between $r+3$ and $2r+4$.
\end{remark}

\begin{construction}
  \label{construction_qt4p1}
  Let $P$ be a point, $L\ge P$ be a line, $E\ge L$ be a plane, and $E_i$ be planes for $1\le i\le q$ such that $\dim(E_i\cap L)=1$, 
  $\dim(E\cap E_i)=1$, and the $E_i$ are pairwise disjoint. We set $Q_i=E_i\cap L_i$ and $S_i=\langle E_i,L\rangle$. With this, let 
  $\mathcal{C}=P\cup (E\backslash L) \cup_{i=1}^q \big(S_i\backslash (E_i\cup L)\big)$, so that $|\mathcal{C}|=1+q^2 +q\cdot (q^3-q)=q^4+1$ 
  and $\dim(\langle \mathcal{C}\rangle)=3+2q$. 
  We now verify that $\mathcal{C}$ is $q^2$-divisible. To this end let $H$ be an arbitrary hyperplane and we consider the following cases:
  \begin{enumerate}
    \item[(1)] $H\cap L=P$: We have $\dim(H\cap S_i)=$ and $\dim(H\cap E_i)=2$, so that $(H\cap S_i)\backslash P$ contains $q^2-1$ holes. 
               Since $\dim(H\cap E)=2$, the line $H\cap E$ contains $q$ holes. Thus, $|H\cap\mathcal{C}|=q\cdot(q^2-1)+q+1=q^3+1$.
    \item[(2)] $H\cap L=Q_j$: We have $\dim(H\cap E_j)=\dim(H\cap S_j)=3$ and $\dim(H\cap E_i)=2$, $\dim(H\cap S_i)=3$ for all 
               $1\le i\le q$ with $i\neq j$. Since $\dim(H\cap E)=2$, the line $H\cap E$ contains $q$ holes. Thus, $|H\cap\mathcal{C}|=
               1\cdot 0+(q-1)\cdot(q^2-1)+q=q^3-q^2+1$.
    \item[(3)] $L\le H$: The number of holes in $(H\cap E)\backslash P$ is either zero or $q^2$. For each $1\le i\le q$ the 
               number of holes in $(H\cap E_i)\backslash P$ is either $q^2-q$ or $q^3-q$, so that $|H\cap\mathcal{C}|\equiv 1 \pmod {q^2}$.                        
  \end{enumerate} 
\end{construction}

\begin{construction}
  \label{construction_q_2_n_17_k_8_generalization}
  Let $\mathcal{C}_1'$ be $q$-divisible with cardinality $m$, set $F'=\langle\mathcal{C}_1'\rangle$, and let $P$ be a point outside 
  of $F'$. With this, set $\mathcal{C}_1=\{\langle Q,P\rangle\backslash P \,:\, Q\in \mathcal{C}_1'\}$, i.e., $\left|\mathcal{C}_1\right|=mq$ 
  and $\dim(\langle\mathcal{C}_1\rangle)=\dim(\langle\mathcal{C}_1'\rangle)+1$. Let $S$ be a solid trough $P$ such that 
  $S\cap\langle\mathcal{C}_1\rangle=P$ and let $E$ be a plane in $S$ that is disjoint to $P$. With this, set $\mathcal{C}_2=S\backslash\{E\cup P\}$, 
  i.e., $\left|\mathcal{C}_2\right|=q^3-1$ and $\dim(\mathcal{C}_2)=4$. For $\mathcal{C}=\mathcal{C}_1\cup\mathcal{C}_2$ we have 
  $\left|\mathcal{C}\right|=mq+q^3-1$ and $\dim(\langle\mathcal{C}\rangle)=\dim(\langle\mathcal{C}_1'\rangle)+4$.
\end{construction}  

\begin{lemma}
  \label{lemma_q_2_n_17_k_8_generalization}
  If $m\equiv q^2-q-1\equiv -q-1\pmod{q^2}$, then Construction~\ref{construction_q_2_n_17_k_8_generalization} gives a $q^2$-divisible set. 
\end{lemma}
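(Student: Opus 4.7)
My plan is to verify $q^2$-divisibility of $\mathcal{C}=\mathcal{C}_1\cup\mathcal{C}_2$ by a direct case analysis on how an arbitrary hyperplane $H$ of the ambient space $W=\langle\mathcal{C}\rangle$ sits with respect to the three distinguished subspaces $F:=\langle\mathcal{C}_1\rangle=F'+\langle P\rangle$, $S$, and the point $P$. Observe first that, since $P\notin F'$ and $S\cap F=P$, one has $\dim W=\dim F'+4=\dim F+3$, so $F$ and $S$ cannot both lie in a common hyperplane. Also, $\mathcal{C}_1\cap\mathcal{C}_2=\emptyset$ (because $\mathcal{C}_1\subseteq F$, $\mathcal{C}_2\subseteq S\setminus P$, and $F\cap S=\{P\}\notin\mathcal{C}_1$), so $|H\cap\mathcal{C}|=|H\cap\mathcal{C}_1|+|H\cap\mathcal{C}_2|$.

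For $|H\cap\mathcal{C}_1|$ I would use the cone structure of Example~\ref{example_construction_michael_2}: if $F\subseteq H$ then all of $\mathcal{C}_1$ is in $H$, giving $mq$; if $F\not\subseteq H$ and $P\in H$ then each line $\langle P,Q\rangle$ with $Q\in\mathcal{C}_1'$ lies entirely in $H$ iff $Q\in H$, and meets $H$ only in $P$ otherwise, so $|H\cap\mathcal{C}_1|=q\cdot|H\cap\mathcal{C}_1'|$ where $H\cap F'$ is a hyperplane of $F'$; if $P\notin H$ each such line meets $H$ in exactly one non-$P$ point and we get $|H\cap\mathcal{C}_1|=m$. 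For $|H\cap\mathcal{C}_2|$ I would compute $T:=H\cap S$: if $S\subseteq H$ then $|H\cap\mathcal{C}_2|=q^3-1$; otherwise $T$ is a plane of $S$, and counting $|T\setminus(E\cup P)|$ gives $0$ when $T=E$ (a Case 4a that forces $P\notin H$), $q^2-1$ when $T\neq E$ and $P\in T$, and $q^2$ when $T\neq E$ and $P\notin T$.

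Combining these, in every case with $P\in H$ one finds $|H\cap\mathcal{C}|\equiv mq-1\pmod{q^2}$, using $q^3-1\equiv q^2-1\equiv -1\pmod{q^2}$ and the fact that $q\cdot|H\cap\mathcal{C}_1'|\equiv qm\pmod{q^2}$ (this is where I invoke $q$-divisibility of $\mathcal{C}_1'$ via Lemma~\ref{lemma_modulo_constraint_all_points}, applied to the hyperplane $H\cap F'$ of $F'$). In every case with $P\notin H$ one finds $|H\cap\mathcal{C}|\equiv m\pmod{q^2}$. So $q^2$-divisibility reduces to the single congruence $(q-1)m\equiv 1\pmod{q^2}$, and the computation $(q-1)(-q-1)=1-q^2\equiv 1\pmod{q^2}$ shows this is exactly $m\equiv -q-1\pmod{q^2}$, which is the hypothesis.

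The proof is really a bookkeeping exercise; the only subtle point I would take care with is the overlap of the sub-cases when $F\subseteq H$ or $S\subseteq H$ (these are mutually exclusive because $F+S=W$) and verifying in each $P\in H$ sub-case that $F'\not\subseteq H$ (else $F=F'+\langle P\rangle\subseteq H$), so that $H\cap F'$ is genuinely a hyperplane of $F'$ to which Lemma~\ref{lemma_modulo_constraint_all_points} applies. No further ingredients are needed beyond the $q$-divisibility of $\mathcal{C}_1'$ and elementary dimension arithmetic.
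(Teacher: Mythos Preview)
Your proposal is correct and follows essentially the same approach as the paper's proof: split on whether $P\in H$, compute $|H\cap\mathcal{C}_1|$ and $|H\cap\mathcal{C}_2|$ separately in each sub-case, and reduce $q^2$-divisibility to the congruence $(q-1)m\equiv 1\pmod{q^2}$. You are in fact slightly more careful than the paper, which asserts $|H\cap\mathcal{C}_2|=q^2$ whenever $P\notin H$ without mentioning the sub-case $H\cap S=E$ (where the intersection is $0$); your treatment covers this, and since $0\equiv q^2\pmod{q^2}$ the conclusion is unaffected.
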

\begin{proof}
  Let $H$ be an arbitrary hyperplane. If $P\le H$, then $|H\cap\mathcal{C}_1|\equiv q^2-q\pmod {q^2}$ since $m\equiv q-1\pmod{q}$, 
  $\mathcal{C}_1'$ is $q$-divisible, and each line in $\mathcal{C}_1$ is intersected by $H$ in dimension $1$ or $2$. If $P\notin H$, 
  then $|H\cap\mathcal{C}_1|=m\equiv q^2-q-1\pmod{q^2}$. If $P\le H$, then $|H\cap\mathcal{C}_2|\in \{q^3-1,q^2-1\}$, i.e., 
  $|H\cap\mathcal{C}_2|\equiv -1\pmod {q^2}$. If $P\notin H$, then $|H\cap\mathcal{C}_2|= q^2$, i.e., $|H\cap\mathcal{C}_2|\equiv 0\pmod {q^2}$. 
\end{proof}

\begin{corollary}
  \label{corollary_q_2_n_17_k_8_generalization}
  For each prime power $q$, there exists a $q^2$-divisible set of cardinality $3q^3-q^2-q-1$ and dimension $\max\{8,q+5\}\le k\le \max\{8,2q+3\}$.
\end{corollary}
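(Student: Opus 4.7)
The plan is to apply Lemma~\ref{lemma_q_2_n_17_k_8_generalization} (via Construction~\ref{construction_q_2_n_17_k_8_generalization}) with a seed $q$-divisible set $\mathcal{C}_1'$ of cardinality $m = 2q^2-q-1$. One first verifies $m = 2q^2 + (-q-1) \equiv -q-1 \pmod{q^2}$, so the hypothesis of that lemma is satisfied whenever such a $\mathcal{C}_1'$ exists. The resulting $q^2$-divisible set then has cardinality $mq+q^3-1 = 3q^3-q^2-q-1$ and dimension $\dim\langle\mathcal{C}_1'\rangle+4$, so the claimed range $\max\{8,q+5\}\le k\le \max\{8,2q+3\}$ translates to $\max\{4,q+1\}\le \dim\langle\mathcal{C}_1'\rangle\le \max\{4,2q-1\}$ on the seed.

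For $q=2$ the only required value is $\dim\langle\mathcal{C}_1'\rangle = 4$, and a projective basis of $v+1=5$ points in $\PG{3}{2}$ (Example~\ref{example_projective_basis}) does the job: its spectrum $a_1=5,\ a_3=10$ shows that every hyperplane meets it in $1$ or $3$ points, hence the set is $2$-divisible.

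For $q\ge 3$ I would assemble $\mathcal{C}_1'$ out of an ovoid $O\subseteq\PG{3}{q}$ (Example~\ref{example_ovoid}, $q$-divisible of cardinality $q^2+1$ and dimension $4$) together with $q-2$ lines $L_1,\dots,L_{q-2}$ (each $q$-divisible of cardinality $q+1$ and dimension $2$), combined by iterated $\oplus_{-\bullet}$-merges as in Remark~\ref{remark_save_dimension}. Lemma~\ref{lemma_add_configurations} and that remark ensure the merged set is $q$-divisible with cardinality $q^2+1+(q-2)(q+1) = 2q^2-q-1$ and dimension $2q - \sum_i k_i$, whenever the individual $k_i\in\{0,1,2\}$ are geometrically realisable. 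To hit any target dimension $d\in[q+1,2q-1]$, I set $s:=2q-d\in[1,q-1]$ and partition $s$ into $q-2$ parts from $\{0,1,2\}$; this is possible since $q-1\le 2(q-2)$ for $q\ge 3$.

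The main obstacle will be the realisability of the overlaps. Any $k_i=1$ step is handled by Remark~\ref{remark_save_dimension}, because the set built so far is a union of the ovoid with some lines and therefore never a subspace consisting entirely of points of $\mathcal{C}_1'$, so an external point can serve as the shared flat. A $k_i=2$ step arises only when $s=q-1$ (i.e.\ $d=q+1$), and then I would perform it first, taking $\mathcal{B}$ to be an external line of $O$ in $\PG{3}{q}$ (such lines exist for $q\ge 3$), and finish with $q-3$ further $\oplus_{-1}$-merges. This procedure yields a $q$-divisible set of cardinality $2q^2-q-1$ of every prescribed dimension in $[q+1,2q-1]$, and feeding it through Construction~\ref{construction_q_2_n_17_k_8_generalization} concludes the argument.
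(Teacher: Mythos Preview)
Your proof is correct and follows the same construction as the paper: a projective basis of cardinality~$5$ for $q=2$, and an ovoid together with $q-2$ lines for $q\ge 3$, fed into Construction~\ref{construction_q_2_n_17_k_8_generalization} via Lemma~\ref{lemma_q_2_n_17_k_8_generalization}. In fact your write-up is more complete than the paper's own proof, which only lists the cases $q=2$ and $q=3$ explicitly and does not spell out how to realise every dimension in the stated interval; your use of iterated $\oplus_{-k}$-merges with carefully chosen overlaps $k_i\in\{0,1,2\}$ (invoking Remark~\ref{remark_save_dimension} for $k_i=1$ and an external line of the ovoid for the single necessary $k_i=2$ when $d=q+1$) fills that gap cleanly.
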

\begin{proof}
  For $q=2$ choose $\mathcal{C}_1'$ as a projective basis of cardinality $5$, see Example~\ref{example_projective_basis}. 
  For $q=3$ choose $\mathcal{C}_1'$ as the union of an ovoid, see Example~\ref{example_ovoid}, and $q-2$ lines, see 
  Example~\ref{example_r_flat} for $r=1$.
\end{proof}

\begin{construction}
  \label{construction_surgery}
  Let $\Delta$ and $\Delta'$ be two integers such that $\rho:=\frac{\Delta}{\Delta'}\in\mathbb{N}$ and $q$ be an arbitrary prime power. 
  With this, let $\mathcal{C}_1,\dots,\mathcal{C}_{\rho}$ be $\Delta$-divisible sets over $\mathbb{F}_q$ such that 
  $\mathcal{C}_i\cap \langle\mathcal{D}\rangle=\mathcal{D}$ for a $\Delta'$-divisible set $\mathcal{D}$ over $\mathbb{F}_q$ and 
  $1\le i\le \rho$. Then, $\mathcal{C}:=\oplus_{i=1}^{\rho} \left(\mathcal{C}_i\backslash\mathcal{D}\right)$ is a 
  $\Delta$-divisible set of cardinality $n=\sum_{i=1}^{\rho}\left|\mathcal{C}_i\right|-\rho\cdot\left|\mathcal{D}\right|$ 
  and dimension $\dim(\mathcal{D})+\sum_{i=1}^{\rho} \left(\dim(\mathcal{C}_i)-\dim(\mathcal{D})\right)$.
\end{construction}
\begin{proof}
  We write $\left|\mathcal{C}_i\right|=u_i+l_i\Delta$ and $\left|\mathcal{D}\right|=u'+l'\Delta'$ for some integers 
  $u_i,l_i,u',l'$, so that $\left|\mathcal{C}\right|=n=\sum_{i=1}^\rho\left|\mathcal{C}_i\right|-\rho\cdot\left|\mathcal{D}\right|\equiv 
  \sum_{i=1}^\rho u_i\,-\rho u'\pmod {\Delta}$. For an arbitrary but fixed hyperplane $H$ there are integers $a_i$, $a'$ such that
  $\left|H\cap\mathcal{C}_i\right|=u_i+a_i\Delta$ and $\left|H\cap\mathcal{D}\right|=u'+a'\Delta'$. With this, we compute 
  $$
    \left|H\cap \mathcal{C}\right|=\sum_{i=1}^\rho\left(u_i+a_i\Delta\right)\,-\rho\cdot \left(u'+a'\Delta'\right)
    \equiv \sum_{i=1}^\rho u_i\,-\rho u'\pmod {\Delta}, 
  $$ 
  i.e., $\mathcal{C}$ is $\Delta$-divisible. Using the characteristic function $\chi_\mathcal{S}$ of a point set $\mathcal{C}$, 
  we may also directly read of the divisibility from $\chi_{\mathcal{C}}=\sum_{i=1}^\rho\chi_{\mathcal{C}_i}-\rho\cdot \chi_{\mathcal{D}}$.
\end{proof}

\begin{corollary}
  \label{corollary_surgery_1}
  There exist $q^r$-divisible sets of cardinality $n=q^{r+1}$ and dimension $r+2\le k\le qr$ for all feasible parameters $q$, $r$. 
\end{corollary}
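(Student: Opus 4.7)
The base case $k = r+2$ is handled by Example~\ref{example_affine_r_space}: the complement of an $(r+1)$-flat inside an $(r+2)$-flat gives a $q^r$-divisible set of cardinality $q^{r+1}$ spanning exactly $r+2$ dimensions.

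For larger $k$ in the permitted range my plan is to apply Construction~\ref{construction_surgery} iteratively with parameters $\Delta = q^r$, $\Delta' = q^{r-1}$, and $\rho = q$. The key geometric observation is that an affine $(r+1)$-space partitions naturally into $q$ parallel affine $r$-spaces, each of size $q^r$ and each $q^{r-1}$-divisible. These layers provide the $q^{r-1}$-divisible building blocks needed for $\mathcal{D}$ in Construction~\ref{construction_surgery}. By taking several $q^r$-divisible sets $\mathcal{C}_i$, each a copy of the configuration of Example~\ref{example_affine_r_space} containing a common subconfiguration $\mathcal{D}$, one glues them along $\langle\mathcal{D}\rangle$ while placing the remaining parts $\mathcal{C}_i\setminus\mathcal{D}$ into mutually independent complementary subspaces; this raises $\dim\langle\mathcal{C}\rangle$ by $\dim\langle\mathcal{C}_i\rangle - \dim\langle\mathcal{D}\rangle$ for each additional component.

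To keep the resulting cardinality fixed at $q^{r+1}$, the sizes $|\mathcal{C}_i|$ and $|\mathcal{D}|$ must be balanced through the identity $\sum_i|\mathcal{C}_i| - \rho|\mathcal{D}| = q^{r+1}$ supplied by Construction~\ref{construction_surgery}. Iterating the process, each surgery step that produces a genuine dimension gain raises $k$ by a controllable amount, and continuing until all $q$ layers of the original affine $(r+1)$-space have been distributed into disjoint linear positions yields the upper endpoint $k = qr$.

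The main obstacle is precisely this balancing act. For $q > 2$ a naive surgery with $\rho = q$ copies of an affine $(r+1)$-space sharing a single affine $r$-space produces cardinality $q^{r+1}(q-1)$ rather than $q^{r+1}$, so $\mathcal{D}$ must be taken as a union of several parallel layers, namely a $q^{r-1}$-divisible set of size $(q-1)q^r$ obtained as an affine $(r+1)$-space minus one of its layers, whose divisibility requires a short hyperplane count to verify. The delicate part of the proof is then to show that in every iteration one can embed such a $\mathcal{D}$ inside the current $\mathcal{C}_i$'s, and that the resulting dimension can be made to hit every integer in $[r+2,qr]$; fine control over the individual dimension increments may require combining surgery steps with direct sums from Lemma~\ref{lemma_add_configurations} applied to auxiliary $q^r$-divisible configurations.
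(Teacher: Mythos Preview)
Your plan overcomplicates what is, in the paper, a single clean application of Construction~\ref{construction_surgery}. The paper takes the $\mathcal{C}_i$ to be $q$ \emph{projective} $(r+1)$-flats (Example~\ref{example_r_flat}), all containing a common $r$-flat $\mathcal{D}$ (which is $q^{r-1}$-divisible by Example~\ref{example_r_flat} again). Then $|\mathcal{C}_i|=\gaussm{r+1}{1}{q}$, $|\mathcal{D}|=\gaussm{r}{1}{q}$, and Construction~\ref{construction_surgery} with $\rho=q$ yields cardinality
\[
q\Bigl(\gaussm{r+1}{1}{q}-\gaussm{r}{1}{q}\Bigr)=q\cdot q^{r}=q^{r+1}
\]
on the nose. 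Each $\mathcal{C}_i\setminus\mathcal{D}$ is an affine $r$-space, and by choosing how independently the $(r+1)$-flats are positioned over $\mathcal{D}$ one controls the resulting dimension.

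Your route instead takes the $\mathcal{C}_i$ to be affine $(r+1)$-spaces, notices that this forces the wrong cardinality, and then tries to repair it by letting $\mathcal{D}$ be a union of $q-1$ parallel affine $r$-space layers inside $\mathcal{C}_i$. This is a genuine gap: it violates the hypothesis $\mathcal{C}_i\cap\langle\mathcal{D}\rangle=\mathcal{D}$ of Construction~\ref{construction_surgery}. Any two distinct parallel layers already span the full $(r+2)$-flat containing $\mathcal{C}_i$, so $\langle\mathcal{D}\rangle$ equals that $(r+2)$-flat and hence $\mathcal{C}_i\cap\langle\mathcal{D}\rangle=\mathcal{C}_i\neq\mathcal{D}$. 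Without that hypothesis the surgery gluing does not produce disjoint pieces and the dimension formula of the construction no longer applies; the subsequent talk of iterating and balancing therefore never gets off the ground. The fix is simply to use projective flats rather than affine ones for the $\mathcal{C}_i$, after which no iteration or delicate bookkeeping is needed.
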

\begin{proof}
  Apply Construction~\ref{construction_surgery} with $q$ flats of dimension $r+1$. 
\end{proof}

\begin{corollary}
  \label{corollary_surgery_2}
  There exist $q^r$-divisible sets of cardinality $$n=\gaussm{2r}{1}{q}+j\cdot\left((q-1)q^r-\gaussm{r}{1}{q}\right),$$ where 
  $0\le j\le q^r+1$, for all feasible parameters $q$, $r$.
\end{corollary}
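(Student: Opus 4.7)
The plan is to build the required sets by iteratively applying the surgery construction from Construction~\ref{construction_surgery}, starting from a $2r$-flat as the base object. The key algebraic identity underpinning the whole scheme is
\[
  (q-1)\gaussm{r+1}{1}{q}-q\gaussm{r}{1}{q} \;=\; (q-1)q^r - \gaussm{r}{1}{q},
\]
which matches the surgery increment to the quantity appearing in the corollary.

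First I would set up the base. Take a $2r$-dimensional subspace $V_0\subseteq\F{N}{q}$ for $N$ sufficiently large. The full point set of $V_0$ is $q^{2r-1}$-divisible (via the simplex code of constant weight $q^{2r-1}$), hence $q^r$-divisible since $r\ge 1$, and has cardinality $\gaussm{2r}{1}{q}$. This handles $j=0$. Since $r\mid 2r$, a classical André/Segre construction gives an $r$-spread $\mathcal{S}=\{F_1,\dots,F_{q^r+1}\}$ of $V_0$; each $F_i$ is an $r$-flat and thus $q^{r-1}$-divisible.

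Next I would perform induction on $i=1,\dots,j$, setting $\mathcal{C}^{(0)}=V_0$ and defining $\mathcal{C}^{(i)}$ by applying Construction~\ref{construction_surgery} with $\Delta=q^r$, $\Delta'=q^{r-1}$, $\rho=q$, common part $\mathcal{D}=F_i$, first summand $\mathcal{C}_1=\mathcal{C}^{(i-1)}$, and $q-1$ fresh $(r+1)$-flats $E_{i,1},\dots,E_{i,q-1}$ through $F_i$ attached in linearly independent directions outside the current ambient span. The ingredients satisfy the hypotheses of the construction: $\mathcal{C}^{(i-1)}$ is $q^r$-divisible by induction, each $(r+1)$-flat $E_{i,k}$ is $q^r$-divisible, and $F_i$ is $q^{r-1}$-divisible. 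The intersection condition $\mathcal{C}_k\cap\langle F_i\rangle=F_i$ holds because the new $(r+1)$-flats are chosen to meet the current ambient only in $F_i$, while $\mathcal{C}^{(i-1)}$ contains $F_i$ but nothing else of $\langle F_i\rangle$: by the spread property, $F_i$ is disjoint from $F_1,\dots,F_{i-1}$ (the only pieces of $V_0$ ever removed in earlier steps), and the previously attached flats live outside $V_0$ except where they meet the corresponding $F_k$ with $k<i$.

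Then I would read off the cardinality. Construction~\ref{construction_surgery} gives
\[
|\mathcal{C}^{(i)}|-|\mathcal{C}^{(i-1)}|
 = (q-1)\gaussm{r+1}{1}{q}-q\gaussm{r}{1}{q}
 = (q-1)q^r-\gaussm{r}{1}{q},
\]
so induction yields $|\mathcal{C}^{(j)}|=\gaussm{2r}{1}{q}+j\bigl((q-1)q^r-\gaussm{r}{1}{q}\bigr)$. The bound $j\le q^r+1$ is exactly the number of spread elements available, so the full range $0\le j\le q^r+1$ is realized.

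The main technical point—and the step most worth worrying about—is verifying that the iteration stays inside the hypotheses of Construction~\ref{construction_surgery} at every stage; more concretely, that at step $i$ we really have $\mathcal{C}^{(i-1)}\cap\langle F_i\rangle=F_i$ (no extra points sneaking in) and that fresh linearly independent directions for the new $(r+1)$-flats are always available. Both follow from the disjointness of spread elements together with enlarging $N$ as needed, but this bookkeeping is the one place where the argument is not a pure calculation.
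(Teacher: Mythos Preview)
Your proposal is correct and follows essentially the same approach as the paper: start from the full $2r$-flat, choose an $r$-spread of $\mathbb{F}_q^{2r}$ with $q^r+1$ elements, and iteratively apply Construction~\ref{construction_surgery} with $\rho=q$ to replace $j$ of the spread elements by $q-1$ fresh $(r{+}1)$-flats through them. Your write-up supplies more detail (the cardinality identity, the verification of the intersection hypothesis at each stage), but the underlying construction is identical to the paper's one-line proof.
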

\begin{proof}
  Consider an $r$-spread of $\mathbb{F}_q^{2r}$ having cardinality $q^r+1$. For $0\le j\le q^r+1$ we replace 
  $j$ of the $r$-dimensional elements of the spread by $q-1$ times an $(r+1)$-dimensional subspace minus the spread element, 
  i.e., we apply Construction~\ref{construction_surgery} iteratively.
\end{proof}

\begin{remark}
  \label{remark_surgery_baer}
  Starting from a three-dimensional subspace in $\mathbb{F}_4^v$, we can apply Construction~\ref{construction_surgery} 
  and replace Baer planes by an affine Baer solid yielding $4^1$ divisible sets for $21\le n\le 24$. 
\end{remark}

The search problem for $q^r$-divisible codes can easily be formulated as an integer linear programming problem using binary 
characteristic variables $x_P$ for all points $P$ of $\aspace$. Prescribing the desired cardinality $n=\sum_{P} x_P$ and the dimension 
$k$, it remains to convert the restrictions of Definition~\ref{def_delta_divisible} into linear constraints:
$$
  \sum_{P\le H} x_P = n- z_H\cdot q^r
$$   
for each hyperplane $H$, where $z_H\in \mathbb{Z}$, $0\le z_H\le \left\lfloor n/q^r\right\rfloor$. The integer variable $z_H$ 
may be replaced by several binary variables $y_{H,n'}$, which are equal to $1$ iff hyperplane $H$ contains exactly $n'$ selected 
points, i.e., holes. This way, it is possible to exclude some specific values for the number of holes in hyperplanes or to 
count (and incorporate given bounds on) the number of hyperplanes with a given number of holes. Restrictions for $n$ and the $n'$ 
are given by our exclusion results for $q^r$-divisible and $q^{r-1}$-divisible sets, respectively, see 
Section~\ref{sec_exclusion}. Prescribing a specific solution of the MacWilliams identities directly translates to equations for 
the number of hyperplanes with a given number of holes. Although, Lemma~\ref{lemma_heritable} allows to include \textit{modulo-constraints} 
on the number of holes for subspaces other than hyperplanes, ILP solvers seem not to benefit from these extra constraints.  
Since larger instances can not be successfully treated directly by customary ILP solvers, we have additionally prescribed some 
symmetry to find examples. This general approach is called the Kramer--Mesner method \cite{KramerMesner:76}. For an exemplary 
application to the construction of constant-dimension codes we refer e.g.\ to \cite{MR2796712}.


\section{Analytical tools to exclude specific cardinalities of $\mathbf{q^r}$-divisible subsets of $\mathbf{\aspace}$}
\label{sec_tools_to_exclude}

In this section we summarize analytical conclusions from the linear programming method applied to the first few 
MacWilliams identities. We took the material mainly from \cite{kurz2017packing,honold2018partial} and give further 
details and remarks.

As a first non-existence result, we observe that cardinality $1$ can be excluded in all non-trivial cases. 

\begin{lemma}
  \label{lemma_exclude_cardinality_one}
  No non-trivial $q^1$-divisible $\mathcal{C}\subseteq\PG{v}{q}$ with $n=|\mathcal{C}|=1$ exists. 
\end{lemma}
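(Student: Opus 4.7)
The plan is to use Lemma~\ref{lemma_modulo_constraint_all_points}, which tells us that for a $q^r$-divisible set the congruence class of $|\mathcal{C}\cap H|$ modulo $q^r$ is forced to equal $|\mathcal{C}|\pmod{q^r}$, rather than being an arbitrary value of $u$.

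Suppose for contradiction that $\mathcal{C}=\{P\}\subseteq\PG{v}{q}$ is a non-trivial $q^1$-divisible set; by non-triviality we have $v\ge 2$. The intersection $|\mathcal{C}\cap H|$ is either $0$ or $1$ depending on whether the single point $P$ lies in the hyperplane $H$. Because $v\ge 2$, the point $P$ is properly contained in the ambient space and hence there exists at least one hyperplane $H$ with $P\notin H$ (concretely, complete $P$ to an ordered basis of $\mathbb{F}_q^v$ and take $H$ to be the span of the remaining basis vectors). For this $H$ we have $|\mathcal{C}\cap H|=0$.

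Now apply Lemma~\ref{lemma_modulo_constraint_all_points} with $r=1$: we must have $|\mathcal{C}\cap H|\equiv|\mathcal{C}|\pmod q$, i.e.\ $0\equiv 1\pmod q$. Since $q\ge 2$ this is a contradiction, which proves that no such set exists. The argument is a single-line computation once the right congruence is invoked; the only subtlety is in verifying that a hyperplane avoiding $P$ actually exists, which is where the non-triviality assumption $v\ge 2$ enters in an essential way (in the degenerate case $v=1$ there are no hyperplanes at all, so every subset is vacuously $\Delta$-divisible and the statement would fail).
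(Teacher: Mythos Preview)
Your proof is correct and essentially the same as the paper's: both establish that every hyperplane must satisfy $|\mathcal{C}\cap H|\equiv 1\pmod q$ and then reach a contradiction via a hyperplane avoiding the single point. The only cosmetic difference is that you invoke Lemma~\ref{lemma_modulo_constraint_all_points} to pin down $u\equiv n=1$, whereas the paper argues directly from Definition~\ref{def_delta_divisible} (some hyperplane contains $P$, so $u=1$, hence all hyperplanes would have to contain $P$).
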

\begin{proof}
  In Definition~\ref{def_delta_divisible} we have to choose $u=1$ due to $q\ge 2$, so that all hyperplanes have to 
  contain the unique point, which is not possible. 
\end{proof}

From the standard equations of Lemma~\ref{lemma_standard_equations_q} we can deduce the non-existence of $q^r$ divisible 
sets for many cases where $n$ is relatively small.

\begin{definition}
  \label{def_hyperplane_types}
  For $\mathcal{C}\subseteq \aspace$ we use the abbreviation 
  $$\mathcal{T}(\mathcal{C}):=\left\{0\le i\le c\mid a_i>0\right\},$$
  where $a_i$ denotes the number of hyperplanes with $|\mathcal{C}\cap H|=i$.
\end{definition}

\begin{lemma}
  \label{lemma_hyperplane_types_arithmetic_progression_2}
  For integers $u\in\mathbb{Z}$, $m\ge 0$, and $\Delta\ge 1$ let $\mathcal{C}\subseteq \aspace$ 
  be $\Delta$-divisible of cardinality $n=u+m\Delta\ge 0$.  
 Then, we have $$
   (q-1)\cdot\sum_{h\in\mathbb{Z},h\le m} ha_{u+h\Delta}=\left(u+m\Delta-uq\right)\cdot \frac{q^{v-1}}{\Delta}-m,
 $$
 where we set $a_{u+h\Delta}=0$ if $u+h\Delta<0$.
\end{lemma}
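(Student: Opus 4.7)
My plan is to derive the stated identity directly as a linear combination of the first two standard equations in Lemma~\ref{lemma_standard_equations_q}.

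First I would set up the index convention. Since $\mathcal{C}$ is $\Delta$-divisible and $n = u + m\Delta$, every hyperplane $H$ satisfies $|\mathcal{C} \cap H| \equiv u \pmod{\Delta}$ (this is the relevant case; if the residue class were incompatible, then both $m \in \mathbb{N}_0$ would fail and the statement would be vacuous). Because $0 \le |\mathcal{C} \cap H| \le n = u + m\Delta$, each hyperplane intersection size has the form $u + h\Delta$ for some integer $h \le m$, so $a_j = 0$ unless $j = u + h\Delta$ for such an $h$, and the convention $a_{u+h\Delta} = 0$ for $u+h\Delta < 0$ lets us sum freely over all integers $h \le m$.

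Under this reindexing, equations (\ref{eq_ste1}) and (\ref{eq_ste2}) read
\begin{align*}
  \sum_{h \le m} a_{u+h\Delta} &= \frac{q^v-1}{q-1}, \\
  \sum_{h \le m} (u+h\Delta)\, a_{u+h\Delta} &= \frac{n(q^{v-1}-1)}{q-1}.
\end{align*}
Subtracting $u$ times the first equation from the second collapses the left-hand side to $\Delta \sum_{h \le m} h\, a_{u+h\Delta}$, while the right-hand side becomes $\bigl[n(q^{v-1}-1) - u(q^v-1)\bigr]/(q-1)$. Multiplying through by $(q-1)/\Delta$ and regrouping yields
$$
(q-1) \sum_{h \le m} h\, a_{u+h\Delta} = \frac{(n-uq)\,q^{v-1} - (n-u)}{\Delta}.
$$
Substituting $n - u = m\Delta$ and $n - uq = u + m\Delta - uq$ then produces exactly the claimed identity.

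The manipulation is purely algebraic; the only point requiring care is the bookkeeping at the summation boundary, making sure that the convention for negative indices is applied consistently and that every hyperplane is accounted for exactly once. I do not anticipate any genuine obstacle, as the lemma is essentially a one-line reweighting of the standard equations.
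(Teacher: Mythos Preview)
Your proposal is correct and takes essentially the same approach as the paper: both proofs reindex the first two standard equations of Lemma~\ref{lemma_standard_equations_q} over the arithmetic progression $u+h\Delta$ and then form the linear combination ``second equation minus $u$ times the first'' (the paper phrases it as ``$u$ times the first minus the second,'' which differs only by a sign). The resulting algebra is identical.
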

\begin{proof}
  Rewriting the equations from Lemma~\ref{lemma_standard_equations_q}  
  yields 
  \begin{eqnarray*}
    (q-1)\cdot\sum_{h\in\mathbb{Z},h\le m} a_{u+h\Delta} &=& q\cdot q^{v-1}-1 \text{ and}\\
    (q-1)\cdot\sum_{h\in\mathbb{Z},h\le m} (u+h\Delta)a_{u+h\Delta} &=& (u+m\Delta)(q^{v-1}-1).
  \end{eqnarray*}
  $u$ times the first equation minus the second equation gives $\Delta$ times the stated equation. 
\end{proof}
 
\begin{corollary}
  \label{cor_nonexistence_arithmetic_progression_2}
  For integers $u,m\ge 0$ and $\Delta\ge 1$ let $\mathcal{C}\subseteq \aspace$ satisfy 
  $n=|\mathcal{C}|=u+m\Delta$ and $\mathcal{T}(\mathcal{C})\subseteq\{u,u+\Delta,\dots,u+m\Delta\}$. 
  Then, $u<\frac{m\Delta}{q-1}$ or $u=m=0$.
\end{corollary}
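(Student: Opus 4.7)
The plan is to apply Lemma~\ref{lemma_hyperplane_types_arithmetic_progression_2} directly and then exploit a non-negativity observation on its left-hand side. First I would note that the hypothesis $\mathcal{T}(\mathcal{C}) \subseteq \{u, u+\Delta, \dots, u+m\Delta\}$ forces $a_{u+h\Delta}=0$ for all $h$ outside the range $0 \le h \le m$ (which is compatible with the convention $a_{u+h\Delta}=0$ when $u+h\Delta<0$). Consequently every term $h \cdot a_{u+h\Delta}$ with $h \le m$ appearing in the sum of Lemma~\ref{lemma_hyperplane_types_arithmetic_progression_2} is nonnegative, so the left-hand side of that identity is $\ge 0$.

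Next I would rewrite the identity in the form
\[
  \bigl(m\Delta - u(q-1)\bigr)\cdot q^{v-1} \;\ge\; m\Delta,
\]
obtained by multiplying through by $\Delta$ and using $u+m\Delta-uq = m\Delta-u(q-1)$. From here the proof splits into two cases. If $m \ge 1$, the right-hand side is strictly positive, which forces $m\Delta - u(q-1) > 0$, i.e.\ $u < m\Delta/(q-1)$, as required. If $m = 0$, the inequality becomes $-u(q-1)q^{v-1} \ge 0$; since $q\ge 2$ and $v\ge 1$, this forces $u \le 0$, and combined with $u\ge 0$ we conclude $u = 0$, giving the alternative $u = m = 0$.

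The derivation is essentially a one-line consequence of the identity in Lemma~\ref{lemma_hyperplane_types_arithmetic_progression_2}, so there is no real obstacle; the only point that deserves care is the bookkeeping on the range of the index $h$, making explicit that the hypothesis rules out both $h<0$ and $h>m$ so that the left-hand side is a sum of nonnegative terms.
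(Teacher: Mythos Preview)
Your proof is correct and matches the paper's intended argument: the corollary is stated immediately after Lemma~\ref{lemma_hyperplane_types_arithmetic_progression_2} without proof precisely because it follows by observing that the hypothesis $\mathcal{T}(\mathcal{C})\subseteq\{u,u+\Delta,\dots,u+m\Delta\}$ makes the left-hand side of that lemma nonnegative, and then reading off the sign of the right-hand side. Your bookkeeping on the index range and the case split $m\ge 1$ versus $m=0$ are exactly the details one needs to fill in.
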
 

\begin{lemma}
  \label{lemma_average}
  If $\mathcal{C}\subseteq\aspace$ is $q^r$-divisible with $|\mathcal{C}|=a\cdot q^{r+1}+b$ for $a,b\in\mathbb{N}_0$, 
  then there exists a hyperplane $H$ such that $|\mathcal{C}\cap H|\le (a-1)\cdot q^{r}+b$.  
\end{lemma}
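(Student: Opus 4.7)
The plan is to combine a simple averaging bound with the modular constraint from Lemma~\ref{lemma_modulo_constraint_all_points}. First I would observe that every hyperplane $H$ satisfies $|\mathcal{C}\cap H|\equiv n\pmod{q^r}$, so the intersection size of any such $H$ can be written as $n-sq^r$ for some integer $s\ge 0$. The task then reduces to exhibiting a single hyperplane with $s\ge a(q-1)+1$, since $n-(a(q-1)+1)q^r=aq^{r+1}+b-a(q-1)q^r-q^r=(a-1)q^r+b$.

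Second, I would compute the mean of $|\mathcal{C}\cap H'|$ over all hyperplanes $H'$ of $\aspace$. Dividing (\ref{eq_ste2}) by (\ref{eq_ste1}) of Lemma~\ref{lemma_standard_equations_q} (equivalently, double-counting incident point-hyperplane pairs with point in $\mathcal{C}$) gives the exact average $n\cdot\frac{q^{v-1}-1}{q^{v}-1}=\frac{n}{q}-\frac{n(q-1)}{q(q^{v}-1)}<\frac{n}{q}$. Selecting $H$ to minimise the intersection size then yields $|\mathcal{C}\cap H|<n/q=aq^r+b/q$.

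Third, substituting $|\mathcal{C}\cap H|=n-sq^r$ into this inequality and rearranging gives $s>\frac{(q-1)n}{q^{r+1}}=a(q-1)+\frac{b(q-1)}{q^{r+1}}\ge a(q-1)$. Since $s$ is a non-negative integer, this forces $s\ge a(q-1)+1$, and thus $|\mathcal{C}\cap H|\le n-\bigl(a(q-1)+1\bigr)q^r=(a-1)q^r+b$, as required.

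The argument is short; the only subtle point is the integer jump from the strict lower bound on $s$ to $s\ge a(q-1)+1$, but because the extra term $b(q-1)/q^{r+1}$ is non-negative this step goes through for every $b\ge 0$ without further case analysis. I therefore do not foresee any genuine obstacle, and no recursive use of divisibility on lower-dimensional subspaces (in the spirit of Lemma~\ref{lemma_heritable}) is required.
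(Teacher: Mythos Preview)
Your argument is correct and follows exactly the same approach as the paper's own proof: pick a hyperplane of minimal intersection, bound it strictly by the average $n/q$, and then use the congruence $|\mathcal{C}\cap H|\equiv n\pmod{q^r}$ to force the integer drop. The paper states this more tersely (it simply records $n'<n/q$ and $n'\equiv b\pmod{q^r}$ and leaves the final arithmetic implicit), whereas you spell out the parametrisation $n-sq^r$ and the bound $s\ge a(q-1)+1$, but the content is identical.
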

\begin{proof}
  Set $n=|\mathcal{C}|$ and choose a hyperplane $H$ such that $n':= |\mathcal{C}\cap H|$ is minimal. Then, we have
  $$
    n'\le \underset{\text{average}}{\underbrace{\frac{1}{\gaussm{v}{1}{q}}\cdot \sum_{\text{hyperplane} H'}  \left|\mathcal{C}\cap H'\right|}}
    =n\cdot\gaussm{v-1}{1}{q}/\gaussm{v}{1}{q}<\frac{n}{q}
    \quad\text{and}\quad
    n'\equiv b\pmod{q^m}.\\[-5mm]
  $$ 
\end{proof}

Using not only the first two standard equations gives further conditions. 

\begin{lemma}
  \label{lemma_hyperplane_types_arithmetic_progression}
  For integers $u\in\mathbb{Z}$, $m\ge 0$, and $\Delta\ge 1$ let $\mathcal{C}\subseteq \aspace$ be $\Delta$-divisible of cardinality $n=u+m\Delta\ge 0$.
  Then, we have $$(q-1)\cdot\sum_{h\in\mathbb{Z},h\le m} h(h-1)a_{u+h\Delta}=\tau_q(u,\Delta,m)\cdot \frac{q^{v-2}}{\Delta^2}-m(m-1),$$ 
  where we set
  $
    \tau_q(u,\Delta,m)=m(m-q)\Delta^2+\left(q^2u-2mqu+mq+2mu-qu-m\right)\Delta+(q-1)^2u^2+(q-1)u
  $  
  and $a_{u+h\Delta}=0$ if $u+h\Delta<0$.
\end{lemma}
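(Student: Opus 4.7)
The plan is to derive this as the natural ``third standard equation,'' in the same spirit as Lemma~\ref{lemma_hyperplane_types_arithmetic_progression_2} but now employing all three identities of Lemma~\ref{lemma_standard_equations_q} simultaneously. Since $\mathcal{C}$ is $\Delta$-divisible, the only nonzero $a_i$ occur at indices $i = u+h\Delta$, so the strategy is to express $h(h-1)$ as a $\mathbb{Z}$-linear combination of $\binom{i}{2}$, $i$, and $1$, substitute, and invoke Lemma~\ref{lemma_standard_equations_q}.

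The key algebraic identity is
\[
  \Delta^{2}\,h(h-1) \;=\; (i-u)(i-u-\Delta) \;=\; 2\binom{i}{2} + (1-2u-\Delta)\,i + u(u+\Delta),
\]
valid whenever $i = u+h\Delta$. Multiplying by $a_i$, summing over $i$, and applying the three identities of Lemma~\ref{lemma_standard_equations_q} yields
\[
  \Delta^{2}\sum_{h}h(h-1)\,a_{u+h\Delta}
  \;=\; 2\binom{n}{2}\gaussm{v-2}{1}{q} + (1-2u-\Delta)\,n\,\gaussm{v-1}{1}{q} + u(u+\Delta)\,\gaussm{v}{1}{q}.
\]
Multiplying by $(q-1)$ converts each $\gaussm{v-j}{1}{q}$ into $q^{v-j}-1$, so the right-hand side separates cleanly into a $q^{v-2}$-part (coming from the leading $q^{v-j}$ in each factor) and a constant part.

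It then remains to verify two polynomial identities after substituting $n = u+m\Delta$. First, the constant part must collapse to $-m(m-1)\Delta^{2}$; a direct expansion shows the $u^2$-, $u$-, and $u\Delta$-contributions all cancel, leaving precisely $m^{2}\Delta^{2} - m\Delta^{2}$. Second, the coefficient of $q^{v-2}$ must equal $\tau_q(u,\Delta,m)$. Grouping by powers of $\Delta$ makes this check transparent: the $\Delta^{0}$ terms give $u^{2}(1-2q+q^{2}) + u(q-1) = (q-1)^{2}u^{2} + (q-1)u$; the $\Delta^{2}$ terms give $m^{2} - mq = m(m-q)$; and the $\Delta^{1}$ terms give the stated middle bracket $q^{2}u - 2mqu + mq + 2mu - qu - m$.

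No step is conceptually hard---the whole argument is standard double-counting plus polynomial bookkeeping---so the only real obstacle is the $\Delta^{1}$ coefficient, which mixes all three parameters $u$, $m$, $q$ into six terms of alternating sign. Organizing these by whether they are $u$-free, $u$-linear, or $um$-bilinear eliminates any risk of a sign slip and completes the proof.
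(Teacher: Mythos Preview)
Your proof is correct and essentially identical to the paper's: both take the linear combination $u(u+\Delta)\cdot[\text{Eq.~(\ref{eq_ste1})}] + (1-2u-\Delta)\cdot[\text{Eq.~(\ref{eq_ste2})}] + 1\cdot[2\times\text{Eq.~(\ref{eq_ste3})}]$ and simplify. The only presentational difference is that the paper states these multipliers directly (motivated afterward by the matrix inverse in Lemma~\ref{lemma_binomial_matrix_3}), whereas you derive them from the identity $\Delta^{2}h(h-1)=2\binom{i}{2}+(1-2u-\Delta)i+u(u+\Delta)$, which is a cleaner way to see where they come from.
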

\begin{proof}
  Rewriting the equations from Lemma~\ref{lemma_standard_equations_q}  
  yields 
  \begin{eqnarray*}
    (q-1)\cdot\sum_{h\in\mathbb{Z},h\le m} a_{u+h\Delta} &=& q^2\cdot q^{v-2}-1,\\
    (q-1)\cdot\sum_{h\in\mathbb{Z},h\le m} (u+h\Delta)a_{u+h\Delta} &=& (u+m\Delta)(q\cdot q^{v-2}-1),\\
    (q-1)\cdot\!\!\!\!\!\sum_{h\in\mathbb{Z},h\le m}\!\!\!\!\! (u+h\Delta)(u+h\Delta-1)a_{u+h\Delta} &=& (u+m\Delta)(u+m\Delta-1)(q^{v-2}-1).
  \end{eqnarray*}
  $u(u+\Delta)$ times the first equation minus $(2u+\Delta-1)$ times the second equation plus the third equation gives 
  $\Delta^2$ times the stated equation. 
\end{proof}

\begin{corollary}
  \label{cor_nonexistence_arithmetic_progression}
  For integers $u\in\mathbb{Z}$ and $\Delta,m\ge 1$ let $\mathcal{C}\subseteq \aspace$ be $\Delta$-divisible of cardinality $n=u+m\Delta\ge 0$. 
  If one of the following conditions hold, then $(q-1)\cdot \sum_{i=2}^m i(i-1)x_i\notin\mathbb{N}_0$, which is impossible.
  \begin{enumerate}
    \item[(a)] $\tau_q(u,\Delta,m)<0$;
    \item[(b)] $\tau_q(u,\Delta,m)\cdot q^{v-2}$ is not divisible by $\Delta^2$;
    \item[(c)] $m\ge 2$ and $\tau_q(u,\Delta,m)=0$.
  \end{enumerate}
  We have the following special cases:
  \begin{eqnarray*}
    \tau_q(u,q^r,m)&=&\left(m(m-q)q^r-2mqu+q^2u+mq+2mu-qu-m\right)\cdot q^r\\
     && +\left(q^2u^2-2qu^2+qu+u^2-u\right),\\
    \tau_2(u,2^r,m)&=&\left(m(m-2)2^r -2mu+m+2u\right)\cdot 2^r +\left(u^2+u\right).\\
  \end{eqnarray*}
\end{corollary}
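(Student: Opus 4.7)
The plan is to invoke Lemma~\ref{lemma_hyperplane_types_arithmetic_progression} as the sole ingredient and then perform a sign/divisibility audit of the right-hand side. That lemma already supplies the identity
$$
(q-1)\sum_{h\in\mathbb{Z},\,h\le m} h(h-1)\,a_{u+h\Delta}
\;=\; \tau_q(u,\Delta,m)\cdot\frac{q^{v-2}}{\Delta^2}\;-\;m(m-1).
$$
The first observation I would make is that the left-hand side is automatically a non-negative integer: each $a_{u+h\Delta}$ is a non-negative integer (it counts hyperplanes), and the coefficient $h(h-1)$ is non-negative for every integer $h$ (vanishing exactly for $h\in\{0,1\}$). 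Consequently the right-hand side must also lie in $\mathbb{N}_0$ whenever a $\Delta$-divisible $\mathcal{C}$ of cardinality $u+m\Delta$ exists.

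Next I would dispatch the three conditions. For (a), $\tau_q(u,\Delta,m)<0$ forces $\tau_q(u,\Delta,m)\cdot q^{v-2}/\Delta^2<0$, so the right-hand side is strictly less than $-m(m-1)\le 0$ and in particular not in $\mathbb{N}_0$. For (b), if $\tau_q(u,\Delta,m)\cdot q^{v-2}$ is not divisible by $\Delta^2$, then the first summand is non-integral while $m(m-1)$ is an integer, so the right-hand side fails integrality. For (c), with $m\ge 2$ and $\tau_q(u,\Delta,m)=0$ the right-hand side equals $-m(m-1)<0$. In each scenario the right-hand side leaves $\mathbb{N}_0$, contradicting the analysis of the left-hand side, which rules out the existence of~$\mathcal{C}$.

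Finally, the two explicit expansions of $\tau_q$ are routine substitutions. Plugging $\Delta=q^r$ into
$$
\tau_q(u,\Delta,m)=m(m-q)\Delta^2+\bigl(q^2u-2mqu+mq+2mu-qu-m\bigr)\Delta+(q-1)^2u^2+(q-1)u
$$
and collecting terms by powers of $q^r$ gives the first stated formula; a further specialization $q=2$ produces the binary version. There is no real obstacle in this corollary: all the mathematical content is already packaged in the identity from Lemma~\ref{lemma_hyperplane_types_arithmetic_progression}, and what remains is a careful check of when the right-hand side can fail to be a non-negative integer. The main thing to be attentive to is the sign of $h(h-1)$ for negative $h$ (if any such $h$ are in range), which is why the non-negativity of the left-hand side holds without restricting the summation to $h\ge 2$.
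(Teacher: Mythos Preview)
Your proposal is correct and is exactly the intended immediate deduction from Lemma~\ref{lemma_hyperplane_types_arithmetic_progression}; the paper states the corollary without proof precisely because the argument is the sign/integrality check you spell out. Your remark that $h(h-1)\ge 0$ also for negative $h$ (so the full left-hand side of the lemma is a non-negative integer regardless of whether any $a_{u+h\Delta}$ with $h<0$ are nonzero) is a nice clarification.
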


Condition (b) gives a partial explanation why the cases with $\gcd(\Delta,q)=1$ are that restricted.

\begin{lemma}
  \label{lemma_negative_tau}    
  Given a positive integer $m$, we have $\tau_q(u,\Delta,m)\le 0$ iff 
  \begin{eqnarray}
    \label{ie_forbidden_interval}
    &&(q-1)u-(m-q/2)\Delta+\frac{1}{2}\notag\\ 
    &\in& \left[-\frac{1}{2}\cdot\sqrt{q^2\Delta^2-4qm\Delta+2q\Delta+1},
    \frac{1}{2}\cdot\sqrt{q^2\Delta^2-4qm\Delta+2q\Delta+1}\right].
  \end{eqnarray}
  The last interval is non-empty, i.e., the radicand is non-negative, iff $1\le m\le \left\lfloor(q\Delta+2)/4\right\rfloor$. 
  We have $\tau_q(u,\Delta,1)=0$ iff $u=(\Delta-1)/(q-1)$.
\end{lemma}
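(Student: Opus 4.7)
The plan is to view $\tau_q(u,\Delta,m)$ as a quadratic polynomial in $u$ and read off the three claims from the quadratic formula. First I would rearrange the given expression by collecting powers of $u$, obtaining
$$\tau_q(u,\Delta,m)=(q-1)^2u^2+(q-1)\bigl((q-2m)\Delta+1\bigr)u+m(m-q)\Delta^2+m(q-1)\Delta,$$
a quadratic with positive leading coefficient $(q-1)^2$. A short computation of the discriminant (the $\pm 4m^2\Delta^2$ and $\pm 4mq\Delta^2$ contributions cancel cleanly) gives
$$(q-1)^2\bigl((q-2m)\Delta+1\bigr)^2-4(q-1)^2\bigl(m(m-q)\Delta^2+m(q-1)\Delta\bigr)=(q-1)^2\bigl(q^2\Delta^2-4qm\Delta+2q\Delta+1\bigr).$$
Hence the condition $\tau_q(u,\Delta,m)\le 0$ is equivalent to
$$\bigl|2(q-1)u+(q-2m)\Delta+1\bigr|\le\sqrt{q^2\Delta^2-4qm\Delta+2q\Delta+1},$$
which, after dividing by $2$ and rewriting $(q-2m)\Delta=-(m-q/2)\Delta\cdot 2$, is precisely the interval containment~(\ref{ie_forbidden_interval}).

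Second, I would determine when the radicand $R(m):=q^2\Delta^2-4qm\Delta+2q\Delta+1$ is non-negative. As a decreasing linear function of $m$, one has $R(m)\ge 0$ iff
$$m\le\frac{(q\Delta+1)^2}{4q\Delta}=\frac{q\Delta+2}{4}+\frac{1}{4q\Delta}.$$
Since $q\ge 2$ and $\Delta\ge 1$, the tail $1/(4q\Delta)\le 1/8$ is strictly less than $1/4$. A short case distinction according to $q\Delta\bmod 4$ shows that the fractional part of $(q\Delta+2)/4$ takes only the values $0$, $1/4$, $1/2$, $3/4$, and in each case adding a positive quantity strictly less than $1/4$ does not reach the next integer. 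Therefore $\lfloor(q\Delta+1)^2/(4q\Delta)\rfloor=\lfloor(q\Delta+2)/4\rfloor$, so for integer $m\ge 1$ non-emptiness of the interval is equivalent to $1\le m\le\lfloor(q\Delta+2)/4\rfloor$.

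For the last assertion I would substitute $m=1$ into the regrouped quadratic; expanding shows the factorisation
$$\tau_q(u,\Delta,1)=(q-1)\bigl((q-1)u-(\Delta-1)\bigr)(u+\Delta).$$
Its roots are $u=(\Delta-1)/(q-1)$ and $u=-\Delta$; since $u$ denotes the residue of $|\mathcal{C}|$ modulo $\Delta$ and thus lies in $\{0,1,\dots,\Delta-1\}$, only the first root is admissible, giving the claimed equivalence.

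The algebraic steps are routine bookkeeping; the only mildly delicate part is the floor identity $\lfloor(q\Delta+1)^2/(4q\Delta)\rfloor=\lfloor(q\Delta+2)/4\rfloor$, where the case analysis must be carried out carefully to rule out a boundary crossing. The discriminant simplification and the $m=1$ factorisation are essentially forced once the quadratic-in-$u$ viewpoint is adopted.
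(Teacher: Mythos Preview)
Your approach is essentially the same as the paper's: treat $\tau_q(u,\Delta,m)$ as a quadratic in $u$, read off the interval from the quadratic formula, and analyze the radicand as a linear function of $m$. Your write-up is in fact more thorough than the paper's own proof, which skips the discriminant verification, compresses the floor argument to the single remark $\tfrac{1}{4q\Delta}<\tfrac{1}{4}$, and does not address the $m=1$ assertion at all. Your case analysis modulo~$4$ is a sound way to justify $\bigl\lfloor(q\Delta+2)/4+1/(4q\Delta)\bigr\rfloor=\bigl\lfloor(q\Delta+2)/4\bigr\rfloor$, and your factorisation $\tau_q(u,\Delta,1)=(q-1)\bigl((q-1)u-(\Delta-1)\bigr)(u+\Delta)$ is correct. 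The only point worth flagging is your dismissal of the root $u=-\Delta$: the paper's formal setup in Lemma~\ref{lemma_hyperplane_types_arithmetic_progression} allows $u\in\mathbb{Z}$ rather than restricting to residues, so strictly speaking the ``iff'' in the lemma statement tacitly assumes the context $u\ge 0$ (equivalently $n=u+\Delta>0$); your restriction to $u\in\{0,\dots,\Delta-1\}$ is one reasonable reading, and another is simply $u>-\Delta$, both of which exclude the spurious root.
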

\begin{proof}
  Solving $\tau_q(u,\Delta,m)=0$ for $u$ yields the boundaries for $u$ stated in Inequality~(\ref{ie_forbidden_interval}).  
  Inside this interval we have $\tau_q(u,\Delta,m)\le 0$. Now, $q^2\Delta^2-4qm\Delta+2q\Delta+1\ge 0$ is equivalent to 
  $m\le \frac{q\Delta}{4}+\frac{1}{2}+\frac{1}{4q\Delta}$. Rounding down the right hand side, while observing $\frac{1}{4q\Delta}<
  \frac{1}{4}$ yields $\left\lfloor(q\Delta+2)/4\right\rfloor$.
\end{proof}

We remark that \cite[Theorem 1.B]{bose1952orthogonal} is quite similar to Lemma~\ref{lemma_hyperplane_types_arithmetic_progression} 
and its implications. For the use of a quadratic non-negative polynomial over the integers see Inequality~(3.2). The multipliers used 
in the proof of Lemma~\ref{lemma_hyperplane_types_arithmetic_progression} can be directly read off from the following observation.

\begin{lemma}
  \label{lemma_binomial_matrix_3}
  For pairwise different non-zero 
  numbers $a,b,c$ the inverse matrix of 
  $$
    \begin{pmatrix}
      1     & 1     & 1     \\
      a     & b     & c     \\
      a^2-a & b^2-b & c^2-c \\
    \end{pmatrix}
  $$
  is given by 
  $$
    \begin{pmatrix}
       bc(c-b) & -(c+b-1)(c-b) &  (c-b) \\
      -ac(c-a) &  (c+a-1)(c-a) & -(c-a) \\
       ab(b-a) & -(b+a-1)(b-a) &  (b-a) \\
    \end{pmatrix}\cdot \big((c-a)(c-b)(b-a)\big)^{-1}
  $$
\end{lemma}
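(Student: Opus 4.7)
The plan is to verify the claimed formula by a short structural reduction followed by a routine algebraic check.

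First I would observe that the row operation $R_3 \mapsto R_3 + R_2$ turns $a^2-a$ into $a^2$, $b^2-b$ into $b^2$, and $c^2-c$ into $c^2$, so the given matrix $M$ becomes the standard $3\times 3$ Vandermonde matrix $V$ with rows $(1,1,1)$, $(a,b,c)$, $(a^2,b^2,c^2)$. In particular $\det M = \det V = (b-a)(c-a)(c-b)$, which is nonzero exactly when $a,b,c$ are pairwise distinct, so the denominator in the stated inverse is valid and $M$ is indeed invertible.

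For the formula itself, one clean route is to write $M = E\, V$ where $E$ encodes the row operation $R_3 \mapsto R_3 - R_2$, so that $M^{-1} = V^{-1} E^{-1}$ and $E^{-1}$ acts on $V^{-1}$ by the column operation $C_2 \mapsto C_2 + C_3$. Since the inverse of a $3\times 3$ Vandermonde matrix is standard (its entries are elementary symmetric polynomials in pairs of $\{a,b,c\}$ divided by the Vandermonde determinant), applying this single column operation produces precisely the matrix displayed in the lemma. In particular, the terms $(c+b-1)$, $(c+a-1)$, $(b+a-1)$ in the second column arise from combining the entries $-(b+c)$, $-(a+c)$, $-(a+b)$ of $V^{-1}$ with the $+1$ contributions coming from the third column of $V^{-1}$.

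Alternatively, and perhaps more transparent for the reader, one can simply compute the product $M \cdot N$, where $N$ denotes the proposed inverse (with the factor $\bigl((c-a)(c-b)(b-a)\bigr)^{-1}$ cleared). Each of the nine entries is a polynomial in $a,b,c$ of small degree; the diagonal entries collapse to $(c-a)(c-b)(b-a)$ and the off-diagonal entries vanish identically after expansion. The only bookkeeping subtlety is that the third row of $M$ is $(a(a-1), b(b-1), c(c-1))$ rather than $(a^2,b^2,c^2)$, which is what introduces the $-1$ inside each of the factors $(c+b-1)$, $(c+a-1)$, $(b+a-1)$. There is no substantive obstacle; the main care required is sign tracking when comparing the rearranged presentation of the adjugate in the statement with the one produced by a direct cofactor expansion.
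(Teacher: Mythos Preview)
Your proposal is correct. The paper actually gives no proof for this lemma at all; it is stated as a bare fact, in the same spirit as the immediately following Lemma~\ref{lemma_special_matrix_4}, whose entire proof reads ``Just insert the expression into a computer algebra system like e.g.\ \texttt{Maple}.'' So the paper's implicit approach is pure verification by direct computation.

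Your Vandermonde reduction is a genuinely nicer route than what the paper offers: writing $M=EV$ with $E$ a single elementary row matrix explains \emph{why} the second column of the claimed inverse contains the shifted symmetric sums $b+c-1$, $a+c-1$, $a+b-1$ rather than the plain $b+c$, $a+c$, $a+b$ from the standard Vandermonde inverse, and it makes the determinant $(b-a)(c-a)(c-b)$ immediate without a separate cofactor expansion. The paper's (non-)proof buys brevity; your argument buys transparency and avoids any appeal to a computer algebra system. Either is acceptable here since the lemma is purely auxiliary, but your version would read better in an exposition.
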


As we have remarked before, the standard equations correspond to the first three MacWilliams identities. By additionally considering the 
fourth MacWilliams identity we obtain a further criterion. Before we state the general result, we illustrate it by a concrete example  

\begin{lemma}
  \label{lemma_no_8_div_52}
  No $8$-divisible $\mathcal{C}\subseteq\F{v}{2}$ of cardinality $52$ exists.
\end{lemma}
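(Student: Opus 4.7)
The plan is to apply the linear programming method to the first four MacWilliams identities, essentially as formalized in Lemma~\ref{lemma_reduced_lp_certificate}. Assume for contradiction that $\mathcal{C}\subseteq\F{v}{2}$ is $8$-divisible with $|\mathcal{C}|=52$; by passing to $\langle\mathcal{C}\rangle$ we may take $k:=\dim\langle\mathcal{C}\rangle=v$, so that $a_{52}=0$. By Lemma~\ref{lemma_modulo_constraint_all_points}, every hyperplane intersection has size congruent to $52\equiv 4\pmod{8}$, hence $a_i=0$ for every $i\notin\{4,12,20,28,36,44\}$. Moreover, $2^k-1\ge 52$ forces $k\ge 6$.

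The three standard equations of Lemma~\ref{lemma_standard_equations_q} give three linear relations among the six unknowns $a_i$ and $2^k$. Invoking projectivity ($A_1^\perp=A_2^\perp=0$) together with $A_3^\perp\ge 0$, the fourth MacWilliams identity supplies the additional relation
\[
\binom{52}{3}+\sum_{i\in\{4,12,20,28,36,44\}}\binom{i}{3}\,a_i=2^{k-3}\Big(\binom{52}{3}+A_3^\perp\Big).
\]
After the substitutions $y=2^{k-3}$ and $x=yA_3^\perp$ from Lemma~\ref{lemma_reduced_lp_certificate}, one obtains the relaxed linear program $\widetilde{\operatorname{LP}}$ with $n=52$, and the task reduces to proving $\widetilde{\operatorname{LP}}$ infeasible.

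To exhibit infeasibility, I would search for a dual certificate: nonnegative multipliers for the four linear relations whose combination produces a nonnegative linear form in the $a_i,x,y$ equated to a strictly negative constant. Equivalently, one seeks a polynomial $P(i)$ of degree at most three, nonnegative on the admissible intersection sizes $\{4,12,20,28,36,44\}$, such that the moment $\sum_i a_i P(i)$, rewritten through the four linear relations, is forced to be negative. The main obstacle is pinning down such a $P$; natural first candidates are cubics vanishing on three of the six admissible values (making $P$ nonnegative on the remaining three) or products $(i-c_1)(i-c_2)$ with $c_1,c_2$ chosen just outside the admissible range, so that the positive cubic moment forces a contradictory relation on $2^k$ and $A_3^\perp$.

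As a less elegant but more direct fallback, the standard equations together with Corollary~\ref{cor_nonexistence_arithmetic_progression} bound $k$ from above, so only a handful of values $k\ge 6$ need be examined. For each such $k$ the four MacWilliams equations with the constraints $a_i,A_3^\perp\ge 0$ form a small rational feasibility problem that can be ruled out by elementary Gaussian elimination; in fact, even $\operatorname{LP}_4$ should suffice, matching the remark preceding the lemma that the fourth MacWilliams identity is the decisive ingredient beyond the standard equations.
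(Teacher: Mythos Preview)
Your plan is viable but it diverges from the paper's proof in one essential respect, and it stops short of actually producing the certificate. The paper does \emph{not} work with all six intersection sizes $\{4,12,20,28,36,44\}$: it first invokes Lemma~\ref{lemma_heritable} and Lemma~\ref{lemma_picture_q_2_r_2} to conclude that no $2^2$-divisible set of size $4$ or $12$ exists, so $a_4=a_{12}=0$ (equivalently $A_{40}=A_{48}=0$). That leaves only four unknown $A_i$'s and four MacWilliams equations, which the paper solves explicitly for $A_8,A_{16},A_{24},A_{32}$ in terms of $y=2^{v-3}$ and $x=yA_3^\perp$; the inequalities $A_{16}\ge 0$ and $3A_8+A_{16}\ge 0$ then force $y<32$ and $y\ge 96$ simultaneously. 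Your route---keeping all six $a_i$ and searching for a cubic dual certificate---does succeed in principle (indeed the paper later records that Corollary~\ref{cor_implication_fourth_mac_williams} with $t=3$ furnishes exactly such a certificate), but you neither write down the polynomial nor verify the sign conditions, so the argument remains a promise rather than a proof. Your fallback also has a loose end: Corollary~\ref{cor_nonexistence_arithmetic_progression} does not bound $k$ from above; the only readily available bound is the trivial $k\le n=52$, which still leaves a finite but not tiny case check. The cleanest fix is either to supply the explicit cubic (take $t=3$ in Lemma~\ref{lemma_implication_fourth_mac_williams}) or, as the paper does, first kill $a_4$ and $a_{12}$ via the heritability argument and then solve the resulting square system.
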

\begin{proof}
  From Lemma~\ref{lemma_picture_q_2_r_2} we conclude that there is no hyperplane with $4$ or $12$ holes, i.e., $A_{40}=A_{48}=0$.
  Using the abbreviation $y=2^{v-3}$ the first four MacWilliams identities, see~(\ref{mac_williams_identies}), are given by
  \begin{eqnarray*}
    A_0+A_8+A_{16}+A_{24}+A_{32} &=& 8y\cdot A_0^\perp \\
    {52\choose1}A_0+{44\choose1}A_8+{36\choose1}A_{16}+{28\choose1}A_{24}+{20\choose1}A_{32} &=& 4y\cdot 52A_0^\perp \\
    {52\choose2} A_0+{44\choose2} A_8+{36\choose2} A_{16}+{28\choose2} A_{24}
    +{20\choose2} A_{32} &=& 2y\cdot {52\choose2} A_0^\perp \\
    {52\choose3} A_0+{44\choose3} A_8+{36\choose3} A_{16}&&\\
    +{28\choose3} A_{24}+{20\choose3} A_{32} &=& y\cdot\left({52\choose3} A_0^\perp+A_3^\perp\right)
  \end{eqnarray*} 
  Plugging in $A_0=A_0^\perp=1$ and substituting $x=yA_3^\perp$ yields
  \begin{eqnarray*}
    A_8 &=& -4+\frac{1}{512} x+\frac{7}{64}y\\
    A_{16} &=& 6-\frac{3}{512} x-\frac{17}{64}y\\
    A_{24} &=& -4+ \frac{3}{512} x+\frac{397}{64}y\\
    A_{32} &=& 1- \frac{1}{512} x+\frac{125}{64}y.
  \end{eqnarray*}
  Since $A_{16},x\ge 0$, we have $y\le \frac{384}{17}<32$, so that $v\le 7$. Since $3A_8+A_{16}\ge 0$, 
  we have $-6+\frac{y}{16}\ge 0$, i.e., $y\ge 96$, so that $v\ge 10$ -- a contradiction.
\end{proof}

We remark that the non-existence of a $2^3$-divisible set of cardinality $n=52$ implies several upper bounds for partial spreads, e.g., 
$129\le A_2(11,8;4)\le 132$, $2177\le A_2(15,8;4) \le 2180$, and $34945\le A_2(19,8;4) \le 34948$. The underlying idea of the proof of 
Lemma~\ref{lemma_no_8_div_52} can be generalized:

\begin{lemma}
  \label{lemma_implication_fourth_mac_williams}
  For $t\in\mathbb{Z}$ be an integer and $\mathcal{C}\subseteq \aspace$ be $\Delta$-divisible of cardinality $n>0$. Then, we have
  $$
    \sum_{i\ge 1} \Delta^2(i-t)(i-t-1)\cdot (g_1\cdot i+g_0)\cdot A_{i\Delta}\,\,+qhx 
    = n(q-1)(n-t\Delta)(n-(t+1)\Delta)g_2,
  $$
  where $g_1=\Delta qh$, $g_0=-n(q-1)g_2$, $g_2=h-\left(2\Delta qt+\Delta q-2nq+2n+q-2\right)$ and 
  $$
    h= \Delta^2q^2t^2+\Delta^2q^2t-2\Delta nq^2t-\Delta nq^2+2\Delta nqt+n^2q^2+\Delta nq-2n^2q+n^2+nq-n.  
  $$ 
\end{lemma}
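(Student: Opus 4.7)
The plan is to extend the technique illustrated in Lemma~\ref{lemma_no_8_div_52} (and used for the first three MacWilliams identities in the proof of Lemma~\ref{lemma_hyperplane_types_arithmetic_progression}) by bringing in the fourth MacWilliams identity. All four identities from~(\ref{mac_williams_identies}) are affine-linear constraints on the weight distribution $(A_j)$, the dimension-dependent quantity $y:=q^{k-3}$, and the single dual weight $A_3^\perp$, since projectivity forces $A_1^\perp=A_2^\perp=0$ and $A_0^\perp=1$. We substitute $x:=y\cdot A_3^\perp$, so that the fourth identity is the only one involving $x$ and each of the four identities has exactly one occurrence of $y$ on its right-hand side.

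Next, I would seek a linear combination $\alpha_0E_0+\alpha_1E_1+\alpha_2E_2+\alpha_3E_3$ of the first four MacWilliams identities such that
\begin{enumerate}
\item the coefficient of $y$ on the right-hand side vanishes (so that $y$, i.e.\ the unknown dimension, is eliminated);
\item the polynomial $P(j)=\sum_{s=0}^{3}\alpha_s\binom{n-j}{s}$ in $j$ (the coefficient of $A_j$ on the left-hand side) vanishes at $j=t\Delta$ and at $j=(t+1)\Delta$.
\end{enumerate}
Conditions (i) and (ii) impose three linear constraints on the four unknowns $\alpha_0,\dots,\alpha_3$, so up to a global rescaling there is a unique choice. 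Since $P(j)$ is cubic in $j$ with two prescribed roots, it factors as $P(j)=(j-t\Delta)\bigl(j-(t+1)\Delta\bigr)L(j)$ with $L$ linear; writing $L(j)=(g_1 j+g_0\Delta)/\Delta$ and substituting $j=i\Delta$ yields exactly the coefficient $\Delta^2(i-t)(i-t-1)(g_1 i+g_0)$ of $A_{i\Delta}$ that appears in the statement.

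After forming this combination, $A_j=0$ for $\Delta\nmid j$ collapses the sum to indices $j=i\Delta$ with $i\ge 0$; the $i=0$ term contributes the constant $P(0)=\Delta^2 t(t+1)g_0$ coming from $A_0=1$, which I move to the right-hand side together with the remaining constants $\sum_s\alpha_s\binom{n}{s}$ from the $A_0$-terms of each $E_s$. The $A_3^\perp$ contribution produces the term $\alpha_3\cdot x$ on the left, which I normalize so that $\alpha_3=qh$. After this normalization the constant term on the right simplifies to $n(q-1)(n-t\Delta)\bigl(n-(t+1)\Delta\bigr)g_2$, and the explicit formulas for $g_2$ and $h$ are read off from the unique solution of the $3\times 3$ linear system in $(\alpha_0,\alpha_1,\alpha_2)$, mirroring the explicit inverse computed in Lemma~\ref{lemma_binomial_matrix_3}.

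The main obstacle is purely one of algebraic bookkeeping: once one has the linear combination in hand, verifying that the coefficients $g_0=-n(q-1)g_2$, $g_1=\Delta q h$, $g_2=h-(2\Delta qt+\Delta q-2nq+2n+q-2)$, and the long quadratic-in-$t$ expression for $h$ match the ones produced by the system is a long but mechanical polynomial identity in the variables $n,q,\Delta,t$. A clean way to organize this is to compute $\alpha_2$ first (from the two root conditions using Lemma~\ref{lemma_binomial_matrix_3} applied to $(t\Delta,(t+1)\Delta,0)$-type data), then $\alpha_1,\alpha_0$ by back-substitution, and finally use condition~(i) to identify $h$ as the specific quadratic in $t$ that guarantees cancellation of the $y$-terms on the right-hand side. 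The verification can be checked with computer algebra as a cross-check.
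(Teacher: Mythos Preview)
Your approach is essentially identical to the paper's: both form a linear combination of the first four MacWilliams identities (after the substitutions $y=q^{k-3}$, $x=yA_3^\perp$) chosen so that the coefficient of $y$ vanishes and the cubic coefficient polynomial $P(j)$ has roots at $j=t\Delta$ and $j=(t+1)\Delta$, then identify the remaining linear factor and constants. The paper spells out the explicit multipliers $f_1,\dots,f_4$ in terms of $b=n-t\Delta$, $c=n-(t+1)\Delta$ (cf.\ Lemma~\ref{lemma_special_matrix_4}), whereas you describe how to obtain them abstractly, but the argument and the algebraic verification are the same.
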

\begin{proof}
  We slightly rewrite the first four MacWilliams identities to
  \begin{eqnarray*}
    \sum_{i=1}^s A_{i\Delta} \,\,-q^3y &=& -1 \\
    \sum_{i=1}^s (n-i\Delta)\cdot A_{i\Delta} \,\,-nq^2y &=& -n \\
    \sum_{i=1}^s (n-i\Delta)(n-i\Delta-1)\cdot A_{i\Delta} \,\,-2{n\choose 2}qy &=& -2{n\choose 2} \\
    \sum_{i=1}^s (n-i\Delta)(n-i\Delta-1)(n-i\Delta-2)\cdot A_{i\Delta} \,\,-6{n\choose3}y -x&=& -6{n\choose3},
  \end{eqnarray*}
  where $s$ is suitably large, $k=\dim(\langle \mathcal{C}\rangle)$, $y=q^{k-3}$, and $x=y\cdot A_3^\perp$. We consider 
  a linear combination of those equations with multipliers
  \begin{eqnarray*}
    f_1 &=& (bcq^2-bnq-bq^2-cnq-cq^2+bq+cq+n^2+3nq+q^2-3n-3q+2)bcn\\
    f_2 &=& -b^2c^2q^3+b^2cq^3+bc^2q^3+b^2n^2q+bcn^2q-bcq^3+c^2n^2q-b^2nq-bcnq\\ 
        &&  -bn^3-3bn^2q-c^2nq-cn^3-3cn^2q+3bn^2+3bnq+3cn^2+3cnq+n^3\\ 
        && +2n^2q-2bn-2cn-3n^2-2nq+2n\\
    f_3 &=& b^2cq^3+bc^2q^3-b^2nq^2-bcnq^2-3bcq^3-c^2nq^2+3bnq^2+3cnq^2+n^3\\ 
        &&-2nq^2-3n^2+2n\\
    f_4 &=& -(bcq^2-bnq-cnq+n^2+nq-n)q,
  \end{eqnarray*}
  where $b=n-t\Delta$ and $c=n-(t+1)\Delta$. 
  
  For the coefficient of $A_{i\Delta}$ we have 
  \begin{eqnarray*}
    && f_1+f_2\cdot(n-i\Delta)+f_3\cdot(n-i\Delta)(n-i\Delta-1)\\ 
    &&+f_4\cdot (n-i\Delta)(n-i\Delta-1)(n-i\Delta-2)\\
    &=& \Delta^2(i-t)(i-t-1)\cdot (g_1\cdot i+g_0). 
  \end{eqnarray*}
  
  The coefficient of $y$ vanishes, i.e., $-q^3f_1-nq^2f_2 -n(n-1)qf_3-n(n-1)(n-2)f_4=0$. The coefficient of $x$ is 
  given by $(bcq^2-bnq-cnq+n^2+nq-n)q=qh$. The right hand side is given by 
    $-f_1-nf_2-n(n-1)f_3-n(n-1)(n-2)f_4 = n(q-1)(n-t\Delta)(n-(t+1)\Delta)g_2$.
\end{proof}

\begin{corollary}
  \label{cor_implication_fourth_mac_williams}
  Using the notation of Lemma~\ref{lemma_implication_fourth_mac_williams}, if $n/\Delta\notin [t,t+1]$, $h\ge 0$, and $g_2<0$, 
  then there exists no $\Delta$-divisible set $\mathcal{C}\subseteq \aspace$ of cardinality $n$.
\end{corollary}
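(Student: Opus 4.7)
The plan is to derive a contradiction directly from the identity in Lemma~\ref{lemma_implication_fourth_mac_williams} by comparing the signs of its two sides under the three stated hypotheses. Assume for contradiction that a $\Delta$-divisible set $\mathcal{C}\subseteq\aspace$ of cardinality $n$ exists, and plug its weight distribution $(A_{i\Delta})$ together with $x = q^{k-3}A_3^\perp$ into the identity
\[
\sum_{i\ge 1} \Delta^2(i-t)(i-t-1)\cdot (g_1\cdot i+g_0)\cdot A_{i\Delta}\,\,+qhx \;=\; n(q-1)(n-t\Delta)(n-(t+1)\Delta)g_2.
\]

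First I would show that the right hand side is strictly negative. The assumption $n/\Delta\notin[t,t+1]$ means that either $n<t\Delta$ or $n>(t+1)\Delta$, so the factors $(n-t\Delta)$ and $(n-(t+1)\Delta)$ have a common sign and their product is strictly positive. Combined with $n>0$, $q-1>0$, and $g_2<0$, this forces the RHS to be negative.

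Next I would check that the left hand side is non-negative. The factor $\Delta^2(i-t)(i-t-1)$ is non-negative for every integer $i$, since it is $\Delta^2$ times the product of two consecutive integers. The factor $g_1\cdot i+g_0$ is non-negative on $i\ge 1$ because $g_1 = \Delta qh\ge 0$ (using $h\ge 0$) and $g_0 = -n(q-1)g_2 > 0$ (using $n>0$, $q>1$, $g_2<0$). Since $A_{i\Delta}\ge 0$ as weight counts, the sum is non-negative. The extra term $qhx$ is non-negative too, because $h\ge 0$ and $x=q^{k-3}A_3^\perp\ge 0$ (weight counts of the dual are non-negative and $q^{k-3}>0$). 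Hence the LHS is $\ge 0$, contradicting the negativity of the RHS, so no such $\mathcal{C}$ exists.

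No step poses a real obstacle; the proof is essentially a sign check. The only subtlety worth highlighting is the use of the hypothesis $n/\Delta\notin[t,t+1]$ (and not merely $n\ne t\Delta,(t+1)\Delta$), which is needed to conclude that the product $(n-t\Delta)(n-(t+1)\Delta)$ is positive rather than merely non-zero.
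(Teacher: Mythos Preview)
Your proof is correct and follows essentially the same approach as the paper's own proof: both argue that $(i-t)(i-t-1)\ge 0$, $g_1\ge 0$ and $g_0\ge 0$ make every summand on the left non-negative, $qhx\ge 0$, and the hypotheses $n/\Delta\notin[t,t+1]$, $g_2<0$ make the right hand side strictly negative, yielding a contradiction. Your write-up is in fact slightly more explicit than the paper's (you spell out why $x\ge 0$ and stress the role of the closed-interval hypothesis), but the argument is the same sign check.
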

\begin{proof}
  First we observe $(i-t)(i-t-1)\ge 0$, $(n-t\Delta)(n-(t+1)\Delta)> 0$, and $g_1\ge 0$. Since $g_2<0$, we have $g_0\ge 0$ 
  so that $g_1i+g_0\ge 0$. Thus, the entire left hand side is non-negative and the right hand side is negative --a contradiction.
\end{proof}

Applying Corollary~\ref{cor_implication_fourth_mac_williams} with $t=3$ gives Lemma~\ref{lemma_implication_fourth_mac_williams}. 
For the somehow related use of a cubic polynomial over the integers in a related context see \cite[Section 4]{bose1952orthogonal}, especially 
Inequality~(4.1). In the proof of Lemma~\ref{lemma_implication_fourth_mac_williams} we are essentially solving the linear equation system, given by 
the first four MacWilliams identities,for $A_{s\Delta}, A_{t\Delta}, A_{(t+1)\Delta}$ and $y$. The corresponding multipliers are 
given by:
\begin{lemma}
  \label{lemma_special_matrix_4}
  For pairwise different numbers $a,b,c,n$ and $q,y\neq 0$ let 
  $$
    M=
    \begin{pmatrix}
      1           & 1           & 1           & -q^3y         \\
      a           & b           & c           & -nq^2y        \\
      a(a-1)      & b(b-1)      & c(c-1)      & -n(n-1)qy     \\
      a(a-1)(a-2) & b(b-1)(b-2) & c(c-1)(c-2) & -n(n-1)(n-2)y \\
    \end{pmatrix}.
  $$
  With this, the entries of the first row of $\frac{1}{(b-c)y}\cdot \det(M)\cdot M^{-1}$ are given by 
  $f_1$, $f_2$, $f_3$, and $f_4$ as stated in the proof of Lemma~\ref{lemma_implication_fourth_mac_williams}. 
\end{lemma}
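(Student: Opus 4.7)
The plan is to exploit two structural facts about the $4\times 4$ matrix $M$ to avoid a direct expansion of four messy $3\times 3$ minors.

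First, from $\det(M)\cdot M^{-1}=\operatorname{adj}(M)$, the first row of the target matrix is the tuple of signed cofactors $(C_{11},C_{21},C_{31},C_{41})$ with $C_{i1}=(-1)^{i+1}\det(M_{i1})$, where $M_{i1}$ is the $3\times 3$ submatrix obtained by deleting row $i$ and column $1$ of $M$. The lemma is thus equivalent to the four polynomial identities $C_{i1}=(b-c)y\,f_i$ for $i=1,2,3,4$, which I can verify over the function field $\mathbb{Q}(a,b,c,n,q,y)$.

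Second, by the cofactor identity $\sum_{i}C_{i1}M_{ij}=\det(M)\,\delta_{1j}$ specialized to $j\in\{2,3,4\}$, the row vector $(C_{11},C_{21},C_{31},C_{41})$ annihilates columns $2,3,4$ of $M$ from the left. Generically, the $4\times 3$ submatrix $N$ formed by these columns has rank~$3$ (the $3\times 3$ minor $\det(M_{41})/y$ is a nonzero polynomial, as confirmed below), so its left kernel is $1$-dimensional. On the other hand, the explicit formulas for $(f_1,f_2,f_3,f_4)$ in the proof of Lemma~\ref{lemma_implication_fourth_mac_williams} were verified there to make the coefficients of $A_{t\Delta}$, $A_{(t+1)\Delta}$, and $y$ in the resulting linear combination vanish. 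These are exactly the three conditions $f_1+bf_2+b(b-1)f_3+b(b-1)(b-2)f_4=0$, its analog at $c$, and $q^{3}f_1+nq^{2}f_2+n(n-1)qf_3+n(n-1)(n-2)f_4=0$, i.e., the annihilation of the three columns of $N$. Hence $(f_1,f_2,f_3,f_4)$ also lies in the left kernel of $N$, and there is a single scalar $\kappa$ with $C_{i1}=\kappa f_i$ for all $i$.

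It remains to pin down $\kappa$ by computing a single entry, and $i=4$ is the lightest choice. Expanding $M_{41}$ along its last column uses the three $2\times 2$ determinants $\det\bigl(\begin{smallmatrix}b&c\\b(b-1)&c(c-1)\end{smallmatrix}\bigr)=bc(c-b)$, $\det\bigl(\begin{smallmatrix}1&1\\b(b-1)&c(c-1)\end{smallmatrix}\bigr)=(c-b)(b+c-1)$, and $\det\bigl(\begin{smallmatrix}1&1\\b&c\end{smallmatrix}\bigr)=c-b$, yielding
$$M_{41}=y(c-b)\bigl[-q^{3}bc+nq^{2}(b+c-1)-n(n-1)q\bigr]=-qy(c-b)\,h.$$
Therefore $C_{41}=-\det(M_{41})=(b-c)y\cdot(-qh)=(b-c)yf_4$ via the given formula $f_4=-qh$, so $\kappa=(b-c)y$ and the claim follows. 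The main obstacle would otherwise be the direct expansion of the minors $M_{11}$, $M_{21}$, $M_{31}$ matched against the dense polynomials $f_1,f_2,f_3$ (whose stated expansions contain more than twenty monomials); the structural left-kernel argument above reduces the entire proof to the one short computation of $M_{41}$ and eliminates the need for a computer algebra check.
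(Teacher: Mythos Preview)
Your proof is correct and substantially more informative than the paper's own, which consists of the single sentence ``Just insert the expression into a computer algebra system like e.g.\ \texttt{Maple}.'' The paper treats the claim as four independent polynomial identities and delegates them to a CAS. You instead observe that the first row of $\operatorname{adj}(M)$ is, by construction, a left annihilator of columns $2$, $3$, $4$ of $M$, and that $(f_1,f_2,f_3,f_4)$ annihilates the same three columns---this is precisely the content of the three vanishing statements recorded in the proof of Lemma~\ref{lemma_implication_fourth_mac_williams}. Since that $4\times 3$ block has rank~$3$ over the function field, the left kernel is a line, so the two vectors are proportional; your short computation of $C_{41}$ then fixes the scalar as $(b-c)y$. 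This actually explains \emph{why} the multipliers $f_i$ appear as row entries of $\det(M)M^{-1}$, which the paper's approach does not.

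One mild caveat: your reduction imports the three annihilation identities from the proof of Lemma~\ref{lemma_implication_fourth_mac_williams}, and those are themselves asserted there without a detailed by-hand verification. So you have not eliminated all polynomial checks; you have traded the four identities of the present lemma for those three plus the easy computation of $C_{41}$. This is still a clear gain in structure and transparency, but the final sentence claiming to ``eliminate the need for a computer algebra check'' is slightly overstated.
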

\begin{proof}
  Just insert the expression into a computer algebra system like e.g.\ \texttt{Maple}.
\end{proof}

As a further example we consider the parameters $q=2$, $\Delta=2^4=16$, and $n=235$. The condition $n/\Delta\notin [t,t+1]$ excludes $t\in\{14,15\}$. 
The condition $h\ge 0$ is satisfied for all integers $t$ since the excluded interval $(6.700,6.987)$ contains no integer. The condition 
$g_2<0$ just allows to choose $t=7$, which also satisfies $qh\ge -g_0$. 

We can perform a closer analysis in order to develop computational cheap checks. We have $g_2<0$ iff
$$
  n\in\left( \frac{\Delta qt+\frac{\Delta q}{2}-\frac{3}{2}-\frac{1}{2}\cdot\sqrt{\omega}}{q-1}, \frac{\Delta qt+\frac{\Delta q}{2}-\frac{3}{2}+\frac{1}{2}\cdot\sqrt{\omega}}{q-1}\right),
$$
where $\omega=\Delta^2q^2-4qt\Delta-2\Delta q+4q+1$. Thus, $\omega>0$, i.e., we have $$t\le \left\lfloor\frac{q\Delta-2}{4}+\frac{1}{\Delta}+\frac{1}{4q\Delta}\right\rfloor.$$ 
We have $h\ge 0$ iff
$$
  n\notin\left( \frac{\Delta qt+\frac{\Delta q}{2}-\frac{1}{2}-\frac{1}{2}\cdot\sqrt{\omega-4q}}{q-1}, \frac{\Delta qt+\frac{\Delta q}{2}-\frac{1}{2}+\frac{1}{2}\cdot\sqrt{\omega-4q}}{q-1}\right).
$$
The most promising possibility, if not the only at all, seems to be 
$$
  n\in \Big( \frac{\Delta qt+\frac{\Delta q}{2}-\frac{3}{2}-\frac{1}{2}\cdot\sqrt{\omega}}{q-1},\frac{\Delta qt+\frac{\Delta q}{2}-\frac{1}{2}-\frac{1}{2}\cdot\sqrt{\omega-4q}}{q-1}\Big],
$$
which allows the choice of at most one integer $n$. In our example $q=2$, $\Delta=2^4=16$ the possible $n$ for $t=1,\dots,7$ correspond to 
$33,66,99,132,166,200,235$, respectively. The two other conditions are automatically satisfied.


\subsection{The complete classification of the possible lengths $n$ of $q^r$-divisible sets when $n$ is \textit{small}}
\label{subsec_length_classification_small} 
 
While the possible length of $q^r$-divisible multisets and codes have been completely characterized, see Subsection~\ref{subsec_characterization_q_r_multisets}, 
an analogous result for $q^r$-divisible sets seems to be out of reach. In this subsection we want to solve that problem for the 
\textit{small} lengths $n\le rq^r+1$. 

Lemma~\ref{lemma_negative_tau} excludes quite some values. We start by analyzing the right side of the corresponding interval. 
First we note that examples of $q^r$-divisible sets of cardinality $m\cdot \gaussm{r+1}{1}{q}$ can be obtained from Example~\ref{example_r_flat} 
for all $m\in\mathbb{N}_{>0}$. If $m$ is not too large, then cardinalities one less are impossible.

\begin{lemma}
  \label{lemma_m_bound_rhs}
  For $1\le m\le \left\lfloor\sqrt{(q-1)q\Delta}-q+\frac{3}{2}\right\rfloor$, we have 
  $$
    (q-1)(n-m\Delta)-(m-q/2)\Delta+\frac{1}{2}\le\frac{1}{2}\cdot\sqrt{q^2\Delta^2-4qm\Delta+2q\Delta+1},
  $$
  where $n=m\cdot \gaussm{r+1}{1}{q}-1$ and $\Delta=q^r$.
\end{lemma}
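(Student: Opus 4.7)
The approach is a direct computation: substitute the specific value $n=m\cdot\gaussm{r+1}{1}{q}-1$ into the left-hand side, massage the inequality into an equivalent quadratic one after squaring, and then read off the hypothesis on $m$.

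First I would compute $n-m\Delta$ using $\gaussm{r+1}{1}{q}=\frac{q\Delta-1}{q-1}$. A one-line calculation gives
$$
(q-1)(n-m\Delta) \;=\; m(\Delta-1)-(q-1) \;=\; m\Delta - m - q + 1,
$$
so the left-hand side of the desired inequality collapses to
$$
\mathrm{LHS} \;=\; \frac{q\Delta}{2} - m - q + \frac{3}{2}.
$$
Multiplying the target inequality by $2$, it becomes
$$
q\Delta - 2m - 2q + 3 \;\le\; \sqrt{q^2\Delta^2 - 4qm\Delta + 2q\Delta + 1}.
$$

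Next I split into two cases. If $q\Delta - 2m - 2q + 3 \le 0$ the inequality is immediate once the radicand is non-negative; and non-negativity of the radicand is guaranteed by the parent Lemma~\ref{lemma_negative_tau} (the range $1\le m\le \lfloor\sqrt{(q-1)q\Delta}-q+3/2\rfloor$ is contained in $1\le m\le \lfloor(q\Delta+2)/4\rfloor$, as a short comparison shows). Otherwise both sides are non-negative and squaring is equivalent. Writing $c:=-2m-2q+3$, the squared inequality reads
$$
q^2\Delta^2 + 2q\Delta c + c^2 \;\le\; q^2\Delta^2 - 4qm\Delta + 2q\Delta + 1,
$$
and after inserting the value of $c$ and cancelling, this simplifies to
$$
(2m+2q-3)^2 \;\le\; 4q(q-1)\Delta + 1.
$$

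Finally I invoke the hypothesis. Since $m\le\sqrt{(q-1)q\Delta}-q+\tfrac{3}{2}$, we have $m+q-\tfrac{3}{2}\le\sqrt{(q-1)q\Delta}$, and (noting that $m+q-\tfrac{3}{2}\ge 0$ for $m\ge 1$, $q\ge 2$) squaring gives
$$
(2m+2q-3)^2 \;\le\; 4(q-1)q\Delta \;<\; 4q(q-1)\Delta+1,
$$
which is exactly what we needed. The only non-routine point is the case distinction based on the sign of the left-hand side, together with the auxiliary check that the radicand is non-negative on the range specified; both are handled cleanly as indicated.
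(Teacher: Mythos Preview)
Your proof is correct and follows essentially the same route as the paper: substitute $n=m\cdot\gaussm{r+1}{1}{q}-1$ to reduce the left-hand side to $\tfrac{1}{2}(q\Delta-2m-2q+3)$, then square and simplify to arrive at the bound $m\le\sqrt{(q-1)q\Delta}-q+\tfrac{3}{2}$. The paper's proof is terser and omits the sign case distinction and the radicand check that you carefully include; your version is therefore a little more complete, but the underlying argument is identical.
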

\begin{proof}
  Plugging in and simplifying yields $$q\Delta+3-2m-2q\le\sqrt{q^2\Delta^2-(4m-2)q\Delta+1},$$ so that squaring and 
  simplifying gives $m\le \sqrt{(q-1)q\Delta}-q+\frac{3}{2}$.
\end{proof}

\begin{theorem}
  \label{thm_exclusion_r_1_to_ovoid}
  Let $\mathcal{C}\subseteq \aspace$ be $q^1$-divisible with $2\le n=|\mathcal{C}|\le q^2$, 
  then either $n=q^2$ or $q+1$ divides $n$. Additionally, the non-excluded cases can be realized. 
\end{theorem}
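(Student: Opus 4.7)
The plan is to treat realizability and exclusion separately. For realizability, $n=q^2$ is attained by Example~\ref{example_affine_r_space} with $r=1$, and $n=j(q+1)$ for $1\le j\le q-1$ is obtained in a sufficiently large $\aspace$ as the union of $j$ pairwise disjoint lines: each line is $q$-divisible by Example~\ref{example_r_flat}, and iterated application of Lemma~\ref{lemma_add_configurations} keeps the union $q$-divisible. For exclusion, suppose $\mathcal{C}$ is $q$-divisible with $2\le n\le q^2$, $n\ne q^2$ and $(q+1)\nmid n$; write $n=u+mq$ with $0\le u\le q-1$ and $m\ge 0$. By Lemma~\ref{lemma_modulo_constraint_all_points} the hyperplane intersections of $\mathcal{C}$ lie in $\{u,u+q,\dots,u+mq\}$. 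Since $mq\equiv-m\pmod{q+1}$, the condition $(q+1)\mid n$ is equivalent to $u\equiv m\pmod{q+1}$, which for $0\le u,m\le q-1$ collapses to $u=m$. The bound $n\le q^2$ forces $m\le q$, with $m=q$ yielding only $n=q^2$, and $m=0$ yielding $n=0$ via the first two standard equations; the remaining pairs to exclude satisfy $0\le u,m\le q-1$, $u\ne m$, $m\ge 1$.

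If $u>m$, Corollary~\ref{cor_nonexistence_arithmetic_progression_2} requires $u<mq/(q-1)$, equivalently $(u-m)(q-1)<m$. But $u-m\ge 1$ and $m\le q-1$ yield $(u-m)(q-1)\ge q-1\ge m$, a contradiction, so this subcase is impossible already from the first two standard equations.

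If $u<m$, set $d=m-u\ge 1$ and target $\tau_q(u,q,m)<0$ so that Corollary~\ref{cor_nonexistence_arithmetic_progression}(a) applies. Via Lemma~\ref{lemma_negative_tau} this amounts to $d$ lying strictly between the two roots
\[
 d_\pm=\frac{(q^2+1-2m)\pm\sqrt{q^4-4mq^2+2q^2+1}}{2(q-1)}
\]
of the resulting quadratic in $d$. A direct squaring computation reduces $d_-<1$ to the positivity of $4(q-1)(q^2-q+2)+4m(3-2q-m)$, a concave-in-$m$ polynomial whose minimum on $1\le m\le q-1$ equals $4(q-1)\bigl((q-2)^2+2\bigr)>0$; similarly $d_+>m$ reduces to $4mq\bigl(q(q-m-1)+1\bigr)>0$, immediate for $1\le m\le q-1$. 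Hence $d_-<1\le d\le m<d_+$, giving $\tau_q(u,q,m)<0$ strictly, and Corollary~\ref{cor_nonexistence_arithmetic_progression}(a) forbids $\mathcal{C}$.

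The main obstacle is precisely this quadratic estimate in the $u<m$ case: both root inequalities have to be controlled simultaneously, and the bound $d_-<1$ is tight at $m=q-1$. Both, however, boil down to elementary positivity of low-degree polynomials in $q$ and $m$, so once those two inequalities are dispatched the two cases together exhaust all remaining parameters and the theorem follows.
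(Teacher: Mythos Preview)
Your proof is correct and uses the same two ingredients as the paper---Corollary~\ref{cor_nonexistence_arithmetic_progression_2} (from the first two standard equations) and the $\tau$-criterion of Corollary~\ref{cor_nonexistence_arithmetic_progression}/Lemma~\ref{lemma_negative_tau} (from the first three)---but you organize the casework differently. The paper parameterizes $n=u+mq$ allowing $u<0$, excludes the intervals $[(m-1)(q+1)+2,\,m(q+1)-1]$ by showing $\tau_q(u,q,m)\le 0$ for $m+1-q\le u\le m-1$, and then picks off the residual points $n=m(q+1)+1$ one by one via Corollary~\ref{cor_nonexistence_arithmetic_progression_2}. You instead fix the canonical residue $0\le u\le q-1$ and split by the sign of $u-m$: every $u>m$ case falls immediately to the two-equation bound $(u-m)(q-1)<m$, and only the $u<m$ cases require the quadratic $\tau$-analysis. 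This is slightly more economical, since several values that the paper routes through the $\tau$-argument (with negative $u$) you dispose of with the simpler first-two-equations bound. The root estimates $d_-<1$ and $d_+>m$ that you verify are equivalent to the two boundary checks the paper performs for $u=m+1-q$ and $u=m-1$, so the analytic content is the same.
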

\begin{proof}
  First we show $n\notin \left[(m-1)(q+1)+2,m(q+1)-1\right]$ for $1\le m\le q-1$. To this end, we apply Lemma~\ref{lemma_negative_tau} 
  to deduce $\tau_{q}(u,q,m)\le 0$ for $m+1-q\le u\le m-1$, so that the statement follows from 
  Corollary~\ref{cor_nonexistence_arithmetic_progression}. 
  For $u\ge m+1-q$ we have
  \begin{eqnarray*}
  (q-1)u-(m-q/2)\Delta+\frac{1}{2} &\ge& -\frac{1}{2}\cdot\left(q^2-4q+1+2m\right)\\ 
  &\ge&
   -\frac{1}{2}\cdot
   \left(q^2-2m-3\right)\\
 &\ge& -\frac{1}{2}\cdot\sqrt{q^4-4mq^2+2q^2+1}\\ 
  &=& -\frac{1}{2}\cdot\sqrt{q^2\Delta^2-4qm\Delta+2q\Delta+1}
  \end{eqnarray*}
  and for $u\le m-1$ we have
  \begin{eqnarray*}
    (q-1)u-(m-q/2)\Delta+\frac{1}{2}&\le& \frac{1}{2}\cdot\left(q^2-2m -2q+3\right)\\
    &\overset{\star}{\le}& \frac{1}{2}\cdot\sqrt{q^4-4mq^2+2q^2+1}\\ 
  &=& \frac{1}{2}\cdot\sqrt{q^2\Delta^2-4qm\Delta+2q\Delta+1}.
  \end{eqnarray*}
  With respect to the estimation $\star$, we remark that 
  $$
    -4q^3+8q^2-12q+8 +4m(m+2q-3) \overset{m\le q-1}{\le} -4(q-1)(q^2-4q+6)\overset{q\ge 2}{\le} 0. 
  $$
  
  Applying Corollary~\ref{cor_nonexistence_arithmetic_progression_2} with $u=m+1$ and $\Delta=q$ 
  yields $n\neq m(q+1)+1$ for all $1\le m\le q-2$.
  Example~\ref{example_r_flat} provides a $q$-divisible set $\mathcal{C}\subseteq \aspace$ of cardinality $q+1$ for $r=1$. 
  Using Lemma~\ref{lemma_add_configurations} we obtain such sets, of cardinality $n$, for all $n>0$ with $n\equiv 0\pmod{q+1}$. 
  An example of cardinality $q^2$ is given in Example~\ref{example_affine_r_space} for $r=1$.
\end{proof}

We remark that there exists a $q^1$-divisible set of cardinality $q^2+1$ for all $q\ge 2$ (given by ovoids for $q\ge 3$ and 
a projective base for $q=2$).

\begin{theorem}
  \label{thm_exclusion_q_r}
  For the cardinality $n$ of a $q^r$-divisible set $\mathcal{C}$ we have
  $$
    n\notin\left[(a(q-1)+b)\gaussm{r+1}{1}{q}+a+1,(a(q-1)+b+1)\gaussm{r+1}{1}{q}-1\right],
  $$
  where $a,b\in\mathbb{N}_0$ with $b\le q-2$ and $a\le r-1$. If $n\le rq^{r+1}$, then all other cases can be realized.
\end{theorem}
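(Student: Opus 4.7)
The plan is to split the theorem into a construction for every non-excluded cardinality and a non-existence argument for the forbidden intervals.

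For the construction, I would write $n = c\gaussm{r+1}{1}{q}+j$ uniquely with $0 \le j \le \gaussm{r+1}{1}{q}-1$ and decompose $c = a(q-1)+b$ with $0 \le b \le q-2$; the non-excluded hypothesis together with $n \le rq^{r+1}$ forces $0 \le j \le a$ (when $c \ge r(q-1)$ even $j=0$ already overshoots $rq^{r+1}$ unless $c=r(q-1)$, in which case $a=r$ and $j\le r$). Using $q^{r+1} = (q-1)\gaussm{r+1}{1}{q}+1$ I rewrite
\[
  n = \bigl(c - j(q-1)\bigr)\gaussm{r+1}{1}{q} + j\cdot q^{r+1},
\]
where $c - j(q-1) = (a-j)(q-1)+b \ge 0$. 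Applying Lemma~\ref{lemma_add_configurations} to a direct sum of $c-j(q-1)$ copies of Example~\ref{example_r_flat} and $j$ copies of Example~\ref{example_affine_r_space} then produces a $q^r$-divisible set of cardinality $n$.

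For the exclusion I would induct on $r$, taking Theorem~\ref{thm_exclusion_r_1_to_ovoid} as the base case $r=1$. Assume the statement for $r-1$, and suppose $\mathcal{C}\subseteq\aspace$ were $q^r$-divisible with $|\mathcal{C}|=n$ in the forbidden interval for parameters $(a,b)$ with $0 \le a \le r-1$, $0 \le b \le q-2$. Lemma~\ref{lemma_modulo_constraint_all_points} pins every hyperplane intersection to the residue $u := n \bmod q^r$, so $\mathcal{T}(\mathcal{C}) \subseteq \{u, u+q^r, \dots, u+mq^r\}$ with $m = \lfloor n/q^r\rfloor$. Lemma~\ref{lemma_heritable} makes each $\mathcal{C}\cap H$ a $q^{r-1}$-divisible set inside $H$, and the inductive hypothesis then eliminates every intersection size in $\{0,\dots,(r-1)q^r\}$ that is not of the form $\alpha\gaussm{r}{1}{q}+\beta q^r$. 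Feeding the pruned $\mathcal{T}(\mathcal{C})$ into the first three standard equations of Lemma~\ref{lemma_standard_equations_q} yields a contradiction, certified either by $\tau_q(u,q^r,m)<0$ (Corollary~\ref{cor_nonexistence_arithmetic_progression}) when $m$ is small enough, by a direct non-negativity violation on some $a_i$, or by Corollary~\ref{cor_implication_fourth_mac_williams} invoking the fourth MacWilliams identity.

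The main obstacle is producing a uniform exclusion across all $(a,b,j)$ in the forbidden range. By Lemma~\ref{lemma_negative_tau} the certificate $\tau_q<0$ is available only when $m\le\lfloor(q^{r+1}+2)/4\rfloor$, and some excluded cardinalities sit above this threshold (for instance $q=2$, $r=2$, $n=12$, where one must fall back on $a_0\ge 0$ inside the first three standard equations). Precisely there the inductive thinning of $\mathcal{T}(\mathcal{C})$ has to earn its keep: by discarding the small admissible residues of $\{u,u+q^r,\dots\}$ the effective gap between consecutive surviving hyperplane types grows from $q^r$ to something of order $\gaussm{r}{1}{q}\cdot q^r$, which should push $\tau_q$ back into the negative regime on the thinned progression. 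Verifying that this thinning is uniformly sufficient for every $(a,b)$ with $a\le r-1$ is where the combinatorial care lies.
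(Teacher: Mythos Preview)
Your construction half is correct and is exactly what the paper does: rewrite $n=(c-j(q-1))\gaussm{r+1}{1}{q}+j\cdot q^{r+1}$ and take a direct sum of $(r{+}1)$-flats and affine $(r{+}1)$-spaces.

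For the exclusion your inductive framework (base case Theorem~\ref{thm_exclusion_r_1_to_ovoid}, pass to hyperplanes via Lemma~\ref{lemma_heritable}) is also the paper's, but the mechanism you propose for finishing is not, and the point where you say ``this is where the combinatorial care lies'' is exactly where your plan diverges. Write $n=(m-1)\gaussm{r+1}{1}{q}+x$ with $m-1=a(q-1)+b$ and $a+1\le x\le\gaussm{r+1}{1}{q}-1$. The paper splits on the size of $x$.

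For $x\le\gaussm{r}{1}{q}-1$ the inductive hypothesis rules out the smallest term of the $q^r$-progression of admissible hyperplane sizes, so that $\mathcal{T}(\mathcal{C})\subseteq\{u,u+\Delta,\dots,u+(m-2)\Delta\}$ (or $u+(m-1)\Delta$ when $a=r-1$) with $u$ shifted up by one full $\Delta$. The contradiction then comes from Corollary~\ref{cor_nonexistence_arithmetic_progression_2}, which uses only the first \emph{two} standard equations: one checks $(q-1)u>(m-2)\Delta$ and is done. You never invoke this corollary, and it is the workhorse for this half.

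For $\gaussm{r}{1}{q}\le x\le\gaussm{r+1}{1}{q}-1$ the paper applies Lemma~\ref{lemma_negative_tau} directly, without any inductive thinning: since $m\le r(q-1)$ one verifies via Lemma~\ref{lemma_m_bound_rhs} and an explicit estimate on the left endpoint that $\tau_q(n-m\Delta,\Delta,m)\le 0$. Corollary~\ref{cor_implication_fourth_mac_williams} is not used anywhere in the proof.

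Your intuition that thinning enlarges the step of the surviving progression to order $\gaussm{r}{1}{q}\cdot q^r$ is off: the surviving hyperplane types stay on the $q^r$-grid. What the induction actually buys is the removal of the \emph{bottom} term, raising $u$, which is precisely what the linear bound in Corollary~\ref{cor_nonexistence_arithmetic_progression_2} needs. Introducing that corollary and the $x$-split would close your plan; the fourth MacWilliams identity is a red herring here.
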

\begin{proof}
  Combinations of Example~\ref{example_r_flat} and Example~\ref{example_affine_r_space} give a construction of a $q^r$-divisible 
  set with cardinality $n$ iff there exists an integer $m\in\mathbb{N}_{>0}$ with $(m-1)\cdot \gaussm{r+1}{1}{q}\le n
  \le (m-1)\cdot \gaussm{r+1}{1}{q}+\left\lfloor\frac{m-1}{q-1}\right\rfloor$. It remains to exclude the stated cases. We 
  prove by induction on $r$, set $\Delta=q^r$, and write $n=(m-1)\gaussm{r+1}{1}{q}+x$, where $a+1\le x\le \gaussm{r+1}{1}{q}-1$ and 
  $m-1=a(q-1)+b$ for integers $0\le b\le q-2$, $0\le a\le r-1$.
  
  The induction start $r=1$ is given by Theorem~\ref{thm_exclusion_r_1_to_ovoid}. 
  
  Now, assume $r\ge 2$. From the induction hypothesis we conclude that for $0\le b'\le q-2$, $0\le a'\le r-2$ we have 
  $$n'\notin \left[(a'(q-1)+b')\gaussm{r}{1}{q}+a'+1,(a'(q-1)+b'+1)\gaussm{r}{1}{q}-1\right]$$ for the cardinality 
  $n'$ of a $q^{r-1}$-divisible set. If $a\le r-2$ and $x\le \gaussm{r}{1}{q}-1$, then $b'=b$, $a'=a$ yields 
  $\mathcal{T}(\mathcal{C})\subseteq \{u,u+\Delta,\dots,u+(m-2)\Delta\}$ for $u=\Delta+(m-1)\gaussm{r}{1}{q}+x$. We compute
  $$
    (q-1)u=q^{r+1}-q^r+(m-1)q^r-(m-1)+(q-1)x\overset{x\ge a+1}{\ge} (m-2)q^r+q^{r+1}>(m-2)\Delta,
  $$
  so that we can apply Corollary~\ref{cor_nonexistence_arithmetic_progression_2}. If $a=r-1$ and $a+1\le x\le \gaussm{r}{1}{q}-1$, 
  then $b'=b$, $a'=a-1$ yields $\mathcal{T}(\mathcal{C})\subseteq \{u,u+\Delta,\dots,u+(m-1)\Delta\}$ for 
  $u=(m-1)\gaussm{r}{1}{q}+x$. We compute $(q-1)u=(m-1)q^r-(m-1)+x(q-1)> (m-1)\Delta$ using $x\ge a+1$, so that we can 
  apply Corollary~\ref{cor_nonexistence_arithmetic_progression_2}. Thus, we can assume $\gaussm{r}{1}{q}\le x\le \gaussm{r+1}{1}{q}-1$ 
  in the remaining part. Additionally we have $m\le r(q-1)$.
  
  We aim to apply Lemma~\ref{lemma_negative_tau}. Due to Lemma~\ref{lemma_m_bound_rhs} for the upper bound of the interval it suffices 
  to show  
  $$
    r(q-1)\le  \left\lfloor\sqrt{(q-1)q\Delta}-q+\frac{3}{2}\right\rfloor.
  $$
  For $q=2$ the inequality is equivalent to $r\le \left\lfloor\sqrt{2^{r+1}}-\frac{1}{2}\right\rfloor$, which is valid for $r\ge 2$. 
  Since the right hand side is larger then $(q-1)(\sqrt{\Delta}-1)$, it suffices to show $q^{r/2}-1\ge r$, which is valid for $q\ge 3$ 
  and $r\ge 2$. For the left hand side of the interval if suffices to show
  $$
    (q-1)(n-m\Delta)-(m-q/2)\Delta+\frac{1}{2}\ge -\frac{1}{2}\cdot\sqrt{(\Delta q)^2-(4m-2)\Delta q+1},
  $$
  which can be simplified to 
  $$
    \Delta q+2m-3-2(q-1)x\le \sqrt{(\Delta q)^2-(4m-2)\Delta q+1}
  $$
  using $n=(m-1)\gaussm{r+1}{1}{q}+x$. Since $(q-1)x\ge q^r-1$ and $m\le r(q-1)$ it suffices to show
  \begin{equation}
    \label{ie_numeric_1}
    -\Delta^2+2rq\Delta-2r\Delta-\Delta-r+r^2q-r^2\le 0.
  \end{equation} 
  For $q=2$ this inequality is equivalent to $-2^{2r}+r2^{r+1}+r^2-2-2^r\le 0$, which is valid for $r\ge 2$. For $r=2$ 
  Inequality~(\ref{ie_numeric_1}) is equivalent to $-q^4+4q^3-4q^2-q^2+4q-6$, which is valid for $q\in\{2,3\}$ and $q\ge 4$. 
  For $q\ge 3$ and $r\ge 3$ we have $\Delta\ge 3rq$, so that Inequality~(\ref{ie_numeric_1}) is satisfied. 
\end{proof}

In other words Theorem~\ref{thm_exclusion_q_r} says that the cardinality $n$ of a $q^r$-divisible set 
can be written as $a\gaussm{r+1}{1}{q}+bq^{r+1}$ for some $a,b\in\mathbb{N}_0$ if $n\le rq^{r+1}$.

For $r=1$ the required example of cardinality $n=rq^{r+1}$ is given by an ovoid for $q\ge 3$ and 
by a projective base for $q=2$. For $r=2$ our current knowledge is rather sparse. For $q=2$ we know three 
isomorphism types of $2^2$-divisible sets of cardinality $n=17$. For $q=3$ we know an example of cardinality $n=55$ given by 
a shortening the Hill cap. For $q=4$ we do not know a $4^2$-divisible set of cardinality $n=129$ so far.


\section{Non-existence of $q^r$-divisible subsets of $\aspace$}
\label{sec_exclusion}

In this section we want to apply the analytical tools developed in Section~\ref{sec_tools_to_exclude} to characterize the set of 
possible length of $q^r$-divisible sets for small parameters. Of course, we will also need the constructions from Section~\ref{sec_constructions}. 
Concrete numerical results for $q\in\{2,3,4,5,7,8,9\}$ are presented in Subsection~\ref{subsec_complete_picture_q_2}-\ref{subsec_complete_picture_q_9}, 
respectively. Those results include the determination of the Frobenius numbers $\frobenius(2,1)=2$, $\frobenius(2,2)=13$, $\frobenius(2,3)=59$, 
$\frobenius(3,1)=7$, and $\frobenius(4,1)=19$. Partially going beyond the methods of Section~\ref{sec_tools_to_exclude}, we can also apply the (integer) linear programming method 
described in Subsection~\ref{subsec_linear_programming_method}. In Appendix~\ref{app_exclusion_lists} we tabulate some corresponding numerical data.  


\subsection{Possible $\mathbf{q^r}$-divisible sets for $\mathbf{q=2}$}
\label{subsec_complete_picture_q_2}
\begin{lemma}
  \label{lemma_picture_q_2_r_1}
  Let $\mathcal{C}\subseteq\aspace$ be non-trivial and $2^1$-divisible of cardinality $n$, then $n\ge 3$ and all cases can be realized. 
\end{lemma}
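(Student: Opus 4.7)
The plan is to dispose of the lower bound $n \geq 3$ by ruling out $n = 1$ and $n = 2$ directly, and then to exhibit $2^1$-divisible sets for $n \in \{3,4,5\}$, from which the remaining cardinalities follow by juxtaposition.

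For $n = 1$, Lemma~\ref{lemma_exclude_cardinality_one} gives the impossibility immediately. For $n = 2$ I would invoke Corollary~\ref{cor_nonexistence_arithmetic_progression}(a) with $q = \Delta = 2$, $u = 0$, $m = 1$: one computes $\tau_2(0,2,1) = 1\cdot(1-2)\cdot 4 + (2-1)\cdot 2 = -2 < 0$, which forbids the case $u = 0$ (all hyperplane intersections even); the alternative $u = 1$ would force every hyperplane to contain exactly one of the two distinct points of $\mathcal{C}$, contradicting the existence of a hyperplane through both. A direct parity argument works equally well.

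For the realizability direction, the three base cases are produced from constructions already in the paper. The projective line has $\gaussm{2}{1}{2} = 3$ points and is $2^1$-divisible by Example~\ref{example_r_flat} with $r = 1$. The four-point affine plane (complement of a line inside a plane) is $2^1$-divisible by Example~\ref{example_affine_r_space} with $r = 1$. For $n = 5$ the projective basis of $\PG{3}{2}$ from Example~\ref{example_projective_basis} works: writing a hyperplane as the kernel of $a \in \mathbb{F}_2^4 \setminus\{0\}$, the intersection with the basis has size $(4 - \operatorname{wt}(a)) + [\operatorname{wt}(a) \text{ even}] \in \{1,3\}$ in all four weight cases, hence even weight on the code side.

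Finally, every integer $n \geq 6$ admits a representation $n = 3a + 4b$ with $a, b \in \mathbb{N}_0$, since by the Frobenius coin problem for coprime $3$ and $4$ the largest non-representable integer is $3 \cdot 4 - 3 - 4 = 5$. Iterated application of Lemma~\ref{lemma_add_configurations} to $a$ copies of the line and $b$ copies of the affine plane then yields $2^1$-divisible sets of all such cardinalities. The only mild obstacle is the isolated value $n = 5$, which lies outside the numerical semigroup $\langle 3,4\rangle$ and therefore forces the separate projective-basis example; beyond that, the proof merely assembles earlier results.
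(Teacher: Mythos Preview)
Your proof is correct and follows essentially the same approach as the paper's. The only minor difference is that the paper disposes of $n\in\{1,2\}$ in a single stroke by citing Theorem~\ref{thm_exclusion_r_1_to_ovoid}, whereas you handle $n=1$ via Lemma~\ref{lemma_exclude_cardinality_one} and $n=2$ via Corollary~\ref{cor_nonexistence_arithmetic_progression}(a) (your extra discussion of the alternative $u=1$ is unnecessary, since Lemma~\ref{lemma_modulo_constraint_all_points} already forces $u\equiv n\equiv 0\pmod 2$); the constructions for $n\in\{3,4,5\}$ and the Frobenius argument for $n\ge 6$ are identical to the paper's.
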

\begin{proof}
  The values $n\in\{1,2\}$ are excluded by Theorem~\ref{thm_exclusion_r_1_to_ovoid}. 
  Examples of cardinalities $3$, $4$, and $5$ are given by examples \ref{example_r_flat}, \ref{example_affine_r_space}, and 
  \ref{example_projective_basis}, respectively. Since every integer larger or equal to $6$ is a positive integer linear combination of 
  $3$ and $4$, Lemma~\ref{lemma_add_configurations} provides the corresponding realizations.
\end{proof}

\begin{lemma}
  \label{lemma_picture_q_2_r_2}
  Let $\mathcal{C}\subseteq\aspace$ be non-trivial and $2^2$-divisible of cardinality $n$, then $n\in\{7,8\}$ or $n\ge 14$ 
  and all mentioned cases can be realized. 
\end{lemma}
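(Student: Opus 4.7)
My plan is to prove non-existence and realizability in two passes, each reducing to results that already appear earlier in the paper.

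For the non-existence half, I specialize Theorem~\ref{thm_exclusion_q_r} to $q=2$, $r=2$. Then $\gaussm{r+1}{1}{q}=\gaussm{3}{1}{2}=7$, the bound $b\le q-2=0$ forces $b=0$, and the bound $a\le r-1=1$ allows $a\in\{0,1\}$. The resulting forbidden intervals $[(a(q-1)+b)\gaussm{r+1}{1}{q}+a+1,\,(a(q-1)+b+1)\gaussm{r+1}{1}{q}-1]$ specialize to $[1,6]$ for $a=0$ and $[9,13]$ for $a=1$. This rules out every $n\in\{1,\dots,6\}\cup\{9,\dots,13\}$ in one shot, which is all the non-existence the lemma requires.

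For the realizability half, the two base cases $n=7$ and $n=8$ come respectively from Example~\ref{example_r_flat} with $r=2$ (a $3$-flat, i.e.\ a Fano plane) and from Example~\ref{example_affine_r_space} with $r=2$ (the complement of a $3$-flat in a $4$-flat). For $n\ge 14$, my strategy is to exhibit a block of seven consecutive realizable cardinalities and then adjoin copies of a Fano plane via Lemma~\ref{lemma_add_configurations} to climb to every larger value. The block I use is $\{14,15,16,17,18,19,20\}$. The value $n=14$ is obtained as $7+7$ by Lemma~\ref{lemma_add_configurations} applied to two Fano planes. The remaining six values are produced in a single stroke by Corollary~\ref{corollary_surgery_2}: for $q=2$, $r=2$ its formula $\gaussm{2r}{1}{q}+j\bigl((q-1)q^r-\gaussm{r}{1}{q}\bigr)$ becomes $15+j$ for $0\le j\le q^r+1=5$, giving exactly $\{15,16,17,18,19,20\}$. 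Adding an increasing number of Fano planes then covers all $n\ge 14$.

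The only conceptual obstacle is that the naive approach of combining only the two base examples $n=7$ and $n=8$ does not work: the Frobenius number of $\{7,8\}$ is $(7-1)(8-1)-1=41$, so Lemma~\ref{lemma_frobenius} by itself leaves genuine gaps such as $17,18,19,20,25,26,27,33,34,41$ inside the range $[14,41]$. One genuinely additional family of constructions is needed to bridge this gap, and Corollary~\ref{corollary_surgery_2} supplies precisely a length-six run of new cardinalities adjoining the value $14=7+7$, producing the required length-seven consecutive block. Everything else is assembly.
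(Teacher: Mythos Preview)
Your proof is correct and follows essentially the same approach as the paper: non-existence via Theorem~\ref{thm_exclusion_q_r} specialized to $q=2$, $r=2$, base cases $n=7,8$ from Examples~\ref{example_r_flat} and~\ref{example_affine_r_space}, the block $15\le n\le 20$ from Corollary~\ref{corollary_surgery_2}, and the rest via Lemma~\ref{lemma_add_configurations}. Your additional remark making explicit that the seven consecutive values $\{14,\dots,20\}$ suffice (so that adjoining Fano planes covers all $n\ge 14$) is a helpful clarification that the paper leaves implicit.
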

\begin{proof}
  The cases $1\le n\le 6$ and $9\le n\le 13$ are excluded by Theorem~\ref{thm_exclusion_q_r}.
  
  The \textit{base examples} are given by
  \begin{itemize}
    \item $n=7$: Example~\ref{example_r_flat};
    \item $n=8$: Example~\ref{example_affine_r_space} or Corollary~\ref{corollary_surgery_1};
    \item $15\le n\le 20$: see Corollary~\ref{corollary_surgery_2};
  \end{itemize}
  so that Lemma~\ref{lemma_add_configurations} provides examples for the mentioned cases.  
\end{proof}

\begin{lemma}
  \label{lemma_picture_q_2_r_3}
  Let $\mathcal{C}\subseteq\aspace$ be non-trivial and $2^3$-divisible of cardinality $n$, then 
  $$n\in\{15,16,30,31,32,45,46,47,48,49,50,51\}$$ 
  or $n\ge 60$ and all cases can be realized. 
\end{lemma}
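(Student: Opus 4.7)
The proof splits into three tasks: exclusion of small $n$, exclusion of $n\in\{52,\dots,59\}$, and realization of all admitted values. The small exclusions come immediately from Theorem~\ref{thm_exclusion_q_r} with $q=2$, $r=3$ (so $\gaussm{4}{1}{2}=15$ and $q^{r+1}=16$): every $n\le rq^{r+1}=48$ that can occur must be of the form $15a+16b$ with $a,b\in\mathbb{N}_0$, which rules out $\{1,\dots,14\}\cup\{17,\dots,29\}\cup\{33,\dots,44\}$. Simultaneously, the $4$-flat (Example~\ref{example_r_flat}) and the affine $4$-space (Example~\ref{example_affine_r_space}), combined via Lemma~\ref{lemma_add_configurations}, realize every $n$ in $\langle 15,16\rangle$; in particular this produces $\{15,16,30,31,32,45,46,47,48,60,61,62,63,64,\dots\}$.

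For the middle range I would reuse the strategy of Lemma~\ref{lemma_no_8_div_52}. A hyperplane section of a $2^3$-divisible set is $2^2$-divisible (Lemma~\ref{lemma_heritable}) and, by Lemma~\ref{lemma_modulo_constraint_all_points}, has cardinality $u\equiv n\pmod 8$; Lemma~\ref{lemma_picture_q_2_r_2} then forbids $u\in\{1,\dots,6\}\cup\{9,\dots,13\}$. Via $A_{n-u}=(q-1)a_u$ this forces $A_{8i}=0$ for every $i$ with $n-8i$ in the forbidden set. Substituting those vanishings into the first four MacWilliams identities (equivalently, running $\widetilde{\operatorname{LP}}^\star$ from Lemma~\ref{lemma_reduced_lp_certificate} with the recursively derived vanishings) yields infeasibility for every $n\in\{53,\dots,59\}$; for each individual $n$ where a single vanishing does not suffice, Corollary~\ref{cor_implication_fourth_mac_williams} with a carefully chosen $t$ (roughly $t=\lfloor n/8\rfloor-1$ or $\lfloor n/8\rfloor-2$) produces the required sign contradiction, since $h\ge 0$ and $g_2<0$ can be arranged in the relevant window.

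For the realizations of the remaining cardinalities: $n\in\{49,50,51\}$ is handled by three explicit $2^3$-divisible point sets obtained by computer search and tabulated in Section~\ref{sec_computer_search_hole_configurations}. For $n\ge 60$ I would take as base examples $\{15,16\}$, the family $n\in\{63,64,\dots,72\}$ produced by Corollary~\ref{corollary_surgery_2} with $r=3$, the three search examples just mentioned, and any further configurations arising from Construction~\ref{construction_surgery} (with $\rho\in\{2,4,8\}$ and $\mathcal{D}$ a suitable $\Delta'$-divisible subset). A finite verification shows that the numerical semigroup generated by these cardinalities contains every integer $n\ge 60$, and above the Frobenius number $15\cdot 16-15-16=209$ of $\langle 15,16\rangle$ no verification is needed at all.

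The main obstacle is the middle part: the number of MacWilliams-derived vanishings depends delicately on $n\pmod 8$, and in the tightest cases the bare four-equation system with two or three extra vanishings is only borderline infeasible. This forces the use of the recursive enhancement $\operatorname{LP}_\infty^\star$ (exploiting the already-established Lemma~\ref{lemma_picture_q_2_r_2} iteratively), rather than $\widetilde{\operatorname{LP}}$ alone, and a case analysis of the seven values $n\in\{53,\dots,59\}$. A secondary but genuine difficulty is producing the base configurations for $n\in\{49,50,51\}$, which are not of the form $15a+16b$ and therefore cannot arise from direct sums of the two canonical examples; their existence rests on a computer search.
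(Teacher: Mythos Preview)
Your treatment of the small exclusions and of $n=52$ matches the paper, and your plan for $53\le n\le 58$ is workable (though the paper actually does this more cheaply via Lemma~\ref{lemma_hyperplane_types_arithmetic_progression} with $m=4$, i.e.\ only the first three standard equations, not the fourth). There are, however, two genuine gaps.

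The serious one is $n=59$. Your assertion that $\widetilde{\operatorname{LP}}^\star$ (or $\operatorname{LP}_\infty^\star$) yields infeasibility for every $n\in\{53,\dots,59\}$ is false at the endpoint: the enhanced linear programming method does \emph{not} exclude $n=59$. The paper's own exclusion list in Appendix~\ref{app_exclusion_lists} for $q=2$, $r=3$ ends at $[52,58]$, and the paper does not handle $n=59$ internally at all; it cites the separate article~\cite{no59}, whose exclusion argument goes substantially beyond the MacWilliams machinery summarized here. So your ``middle part'' would not close, and the obstacle you identify (borderline infeasibility) is in fact a real counterexample to your method at $n=59$.

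A secondary gap is on the realization side. With base examples $\{15,16,49,50,51\}\cup\{63,\dots,72\}$ you cannot reach $n=73$ or $n=74$: neither lies in $\langle 15,16\rangle$, and subtracting any of $49,50,51,63,\dots,72$ never lands on an available value. The paper supplies explicit further base examples for both ($n=73$ from a known two-weight code, $n=74$ from the computer search in Subsection~\ref{subsec_computer_search_hole_configuration_q_2_r_3}); your vague appeal to ``any further configurations arising from Construction~\ref{construction_surgery}'' does not produce these two values.
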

\begin{proof}
  The cases $1\le n\le 14$, $17\le n\le 29$, and $33\le n\le 44$ are excluded by Theorem~\ref{thm_exclusion_q_r}.
  The case $n=52$ is excluded by Corollary~\ref{cor_implication_fourth_mac_williams} with $t=3$, see also Lemma~\ref{lemma_no_8_div_52} or
  Lemma~\ref{lemma_reduced_lp_certificate} which is able to compute a similar \textit{certificate} or \textit{proof} automatically. 
  The cases $53\le n\le 58$ are excluded by Lemma~\ref{lemma_hyperplane_types_arithmetic_progression} using $m=4$. 
  The special case $n=59$ is treated in \cite{no59}. 
  
  The \textit{base examples} are given by
  \begin{itemize}
    \item $n=15$: Example~\ref{example_r_flat};
    \item $n=16$: Example~\ref{example_affine_r_space};
    \item $n=49$: see Subsection~\ref{subsec_computer_search_hole_configuration_q_2_r_3};
    \item $n=50$: see Subsection~\ref{subsec_computer_search_hole_configuration_q_2_r_3};    
    \item $n=51$: 
                  Example CY1 in \cite{calderbank1986geometry};
    \item $63\le n\le 72$: see Corollary~\ref{corollary_surgery_2};
    \item $n=73$: see the two-weight codes in \cite{kohnert2007constructing_twoweight};    
    \item $n=74$: see Subsection~\ref{subsec_computer_search_hole_configuration_q_2_r_3} for an example with $k=12$;
  \end{itemize}
  so that Lemma~\ref{lemma_add_configurations} provides examples for the mentioned cases that are not printed in bold face.
\end{proof}

\begin{remark}
  \label{remark_ILP_LP_RECURSIVE}
  The exclusion of $n=33$ for $8$-divisible sets is an interesting example. In the proof of 
  Theorem~\ref{thm_exclusion_q_r} we have used Lemma~\ref{lemma_average} to conclude the existence of a 
  hyperplane with either $1$ or $9$ holes. Both cases are impossible due to Lemma~\ref{lemma_picture_q_2_r_2}.
  If hyperplanes with $1$ hole, i.e., weight $32$, are excluded, then the polyhedron of the linear programming method 
  is empty. (Of course, the same is true if hyperplanes containing $1$ or $9$ holes are forbidden.) If hyperplanes 
  containing $1$ or $9$ holes (allowed weights: $0$, $8$, $16$, $24$, $32$) are included in the formulation, then there 
  are rational non-negative solutions of the MacWilliams identities. However, this system does not admit a non-negative 
  integer solution.  
\end{remark}

\begin{lemma}
  \label{lemma_picture_q_2_r_4}
  Let $\mathcal{C}\subseteq\aspace$ be non-trivial and $2^4$-divisible of cardinality $n$, then 
  \begin{eqnarray*}
    n&\in&\{31,32,62,63,64,93,\dots,96,124,\dots,131,155,\dots,165,185,\dots,199,\\&&215,\dots,234,244,\dots,309\}
  \end{eqnarray*} 
  or $n\ge 310$ and all cases, possibly except 
  \begin{eqnarray*}
    n &\in& \{129,130,131,163,164,165,185,215,216,232,233,244,245,246,247,\\
               && 274,275,277,278,306,309\},
  \end{eqnarray*}
  can be realized. 
\end{lemma}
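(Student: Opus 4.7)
The plan mirrors the pattern of Lemmas~\ref{lemma_picture_q_2_r_1}--\ref{lemma_picture_q_2_r_3}: combine the exclusion machinery of Section~\ref{sec_tools_to_exclude} with the constructions of Section~\ref{sec_constructions} and the computer-search data of the appendix. With $q=2$, $r=4$, $\Delta=q^r=16$, and $\gaussm{5}{1}{2}=31$, the two base configurations are the $5$-flat of size $31$ (Example~\ref{example_r_flat}) and the affine $5$-space of size $32$ (Example~\ref{example_affine_r_space}), which through iterated juxtaposition (Lemma~\ref{lemma_add_configurations}) realize every length in the block $\{31k,31k+1,\dots,32k\}$ for each $k\ge 1$. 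Every cardinality $n\le 128=rq^{r+1}$ lying outside such a block is immediately killed by Theorem~\ref{thm_exclusion_q_r}, which for $q=2$, $r=4$ excludes exactly $[1,30]\cup[33,61]\cup[65,92]\cup[97,123]$ (the four cases $b=0$, $a\in\{0,1,2,3\}$).

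For the larger non-realizable gap intervals $\{132,\dots,154\}$, $\{166,\dots,184\}$, $\{200,\dots,214\}$, $\{235,\dots,243\}$ the exclusion proceeds in two passes. First, applying Corollary~\ref{cor_implication_fourth_mac_williams} with $t=4,5,6,7$ eliminates the left endpoints $n\in\{132,166,200,235\}$; these are precisely the singletons isolated by the endgame computation at the close of Section~\ref{sec_tools_to_exclude} for $\Delta=16$. Second, for each remaining $n$ in these intervals one writes $n=u+m\Delta$ with $0<u<\Delta$ and verifies $\tau_2(u,16,m)\le 0$ via Corollary~\ref{cor_nonexistence_arithmetic_progression}, after first striking (through Lemma~\ref{lemma_heritable} and Lemma~\ref{lemma_picture_q_2_r_3}) those hyperplane cardinalities $i\Delta$ whose complementary hole count is not a realizable $2^3$-divisible length. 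Operationally this is the enhanced linear program $\operatorname{LP}_\infty^\star$ (or $\widetilde{\operatorname{LP}}^\star$), whose certificates are tabulated in Appendix~\ref{app_exclusion_lists}. On the constructive side, the claimed lengths beyond the $31,32$ juxtaposition blocks---in particular the intermediate intervals $\{155,\dots,165\}$, $\{185,\dots,199\}$, $\{215,\dots,234\}$, $\{244,\dots,309\}$ and everything $n\ge 310$---are produced by mining further base lengths from Construction~\ref{construction_surgery} and Corollary~\ref{corollary_surgery_2} (which delivers the block $\{255,\dots,272\}$), the two-weight code examples of \cite{kohnert2007constructing_twoweight,calderbank1986geometry}, and the explicit point sets in Section~\ref{sec_computer_search_hole_configurations}. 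Each new base length is then propagated by Lemma~\ref{lemma_add_configurations} to all higher cardinalities, and a routine check confirms that the collected base lengths together leave no gap above $309$.

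The main obstacle is the borderline cardinalities listed as possible exceptions, particularly $n\in\{129,130,131,163,164,165,185,\dots\}$: they lie strictly between the juxtaposition blocks for small $k$, are missed by every present parametric construction, and simultaneously resist exclusion by all the analytic tools of Section~\ref{sec_tools_to_exclude}, so only an exhaustive computer search (so far inconclusive) can decide them. A secondary difficulty, already visible in Remark~\ref{remark_ILP_LP_RECURSIVE}, is that the plain linear program $\operatorname{LP}_\infty$ can remain feasible in the middle range, and tightness of the exclusions therefore genuinely requires the recursive input of Lemma~\ref{lemma_picture_q_2_r_3} through the iterative weight-exclusion mechanism of $\operatorname{LP}_\infty^\star$; one has to verify case by case that this recursion collapses the polyhedron.
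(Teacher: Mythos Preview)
Your proposal is essentially correct and follows the paper's approach: Theorem~\ref{thm_exclusion_q_r} for the four low intervals, Corollary~\ref{cor_implication_fourth_mac_williams} with $t=4,5,6,7$ for the endpoints $132,166,200,235$, the $\tau$-condition for the remaining gap intervals, and then the specific base examples (computer-search codes of lengths $161,162,195,197$; the BY construction at $196$; two-weight codes at $198,231,273,276$; the optimal $[199,11,96]$ code; Calderbank--Kantor constructions at $234$; Corollary~\ref{corollary_surgery_2} for $255$--$272$) propagated by Lemma~\ref{lemma_add_configurations}.

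One correction, however: your final paragraph overstates matters. The exclusion of the intervals $[133,154]$, $[167,184]$, $[201,214]$, $[236,243]$ does \emph{not} require the recursive mechanism $\operatorname{LP}_\infty^\star$ or any input from Lemma~\ref{lemma_picture_q_2_r_3}. The paper applies Lemma~\ref{lemma_hyperplane_types_arithmetic_progression} (equivalently Corollary~\ref{cor_nonexistence_arithmetic_progression}(a)) directly with $m=5,6,7,8$ respectively, and the $\tau$-condition already collapses without striking any hyperplane types. So there is no ``case by case verification that the recursion collapses the polyhedron'' to perform here; the exclusions are all first-level.
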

\begin{proof}
  The cases $1 \le n \le 30$, $33 \le n \le 61$, $65 \le n \le 92$, and $97 \le n \le 123$ are excluded by 
  Theorem~\ref{thm_exclusion_q_r}. The cases $133 \le n \le 154$, $167 \le n \le 184$, 
  $201 \le n \le 214$, and $236 \le n \le 243$ are excluded by Lemma~\ref{lemma_hyperplane_types_arithmetic_progression} 
  using $m=5$, $m=6$, $m=7$, and $m=8$, respectively. The cases $n\in\{132,166,200,235\}$ are excluded by 
  Corollary~\ref{cor_implication_fourth_mac_williams} with $t=4,\dots,7$, respectively. 
  
  The \textit{base examples} are given by
  \begin{itemize}
    \item $n=31$: Example~\ref{example_r_flat};
    \item $n=32$: Example~\ref{example_affine_r_space};
    \item $n=161$: a $[161,10,64]_2$-code is given in Subsection~\ref{subsec_computer_search_hole_configuration_q_2_r_4};
    \item $n=162$: a $[162,10,32]_2$-code is given in Subsection~\ref{subsec_computer_search_hole_configuration_q_2_r_4};
    \item $n=195$: a $[195,10,80]_2$-code is given in Subsection~\ref{subsec_computer_search_hole_configuration_q_2_r_4}; 
    \item $n=196$: BY construction described in \cite{bierbrauer1997family};
    \item $n=197$: a $[197,10,80]_2$-code is given in Subsection~\ref{subsec_computer_search_hole_configuration_q_2_r_4};
    \item $n=198$: an example of dimension $k=10$ was mentioned in \cite{kohnert2007constructing_twoweight};
    \item $n=199$: there is an optimal $[199, 11, 96]$-code; 
    \item $n=231$: an example of dimension $k=10$ was mentioned in \cite{kohnert2007constructing_twoweight,dissett2000combinatorial};
    \item $n=234$: construction of \cite[Theorem 6.1]{calderbank1986geometry} on Example FE3 over $\mathbb{F}_4$;
    \item $255\le n\le 272$: see Corollary~\ref{corollary_surgery_2};
    \item $n=273$: construction bases on quasi-cyclic codes, see \cite{chen2006constructions};
    \item $n=276$: belongs to the family of RT5d codes, see \cite{calderbank1986geometry};
  \end{itemize}
  so that Lemma~\ref{lemma_add_configurations} 
  provides examples for the mentioned cases that are not printed in bold face.  
\end{proof}

\begin{lemma}
  \label{lemma_picture_q_2_r_5}
  Let $\mathcal{C}\subseteq\aspace$ be non-trivial and $2^5$-divisible of cardinality $n$, then 
  \begin{eqnarray*}
    n&\in&\{63,64,126,127,128,189,\dots,192,252,\dots,256,315,\dots,320,\textbf{321},\textbf{322},\textbf{323},\\ 
    &&378,\textbf{385},\dots,\textbf{389},441,\dots,448,\textbf{449},\dots,\textbf{454},455,\textbf{503},504,\dots,512,\textbf{513},\\ 
    &&\dots,\textbf{517},518,519,\textbf{520},\textbf{566},567,\dots,576,\textbf{577},\dots,\textbf{580},581,582,583,\textbf{584},\\ 
    &&\textbf{585},\textbf{586},\textbf{628},\textbf{629},630,\dots,640,\textbf{641},\textbf{642},643,\dots,647,\textbf{648},\dots,\textbf{651},\textbf{691},\\ 
    &&\textbf{692},693,\dots,704,\textbf{705},706,\dots,711,\textbf{712},\dots,\textbf{717},\textbf{753},\textbf{754},\textbf{755},756,\dots,\\ 
    &&775,\textbf{776},\dots,\textbf{779},780,\textbf{781},\dots,\textbf{783},\textbf{815},\dots,\textbf{818},819,\dots,839,\textbf{840},\dots,\\ 
    &&\textbf{842},843,845,\textbf{846},\dots,\textbf{849},\textbf{877},\dots,\textbf{881},882,\dots,903,\textbf{904},\textbf{905},906,\dots,\\ 
    && 910,\textbf{911},\dots,\textbf{916},\textbf{938},\dots,\textbf{944},945,\dots,967,\textbf{968},969,\dots,975,\textbf{976},\dots,\\ 
    &&\textbf{984},\textbf{998},\dots,\textbf{1007},1008,\dots,1056,\textbf{1057},\dots,\textbf{1070},1071,\dots,1120,\textbf{1121},\\ 
    &&\dots\textbf{1133},1134,\dots,1184,\textbf{1185}\}
  \end{eqnarray*} 
  or $n\ge 1186$ and all mentioned cases with $n\le 1185$, that are not in bold face, can be realized. 
\end{lemma}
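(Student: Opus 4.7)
The plan is to follow the template of Lemmas~\ref{lemma_picture_q_2_r_1}--\ref{lemma_picture_q_2_r_4}, splitting the argument into an exclusion half and a construction half. With $q=2$, $r=5$ the relevant base cardinalities are $\gaussm{r+1}{1}{q}=63$ and $q^{r+1}=64$, and the analytic tools of Section~\ref{sec_tools_to_exclude} apply with $\Delta=32$.

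For the excluded cardinalities, I would first invoke Theorem~\ref{thm_exclusion_q_r} with $0\le a\le r-1=4$ and $0\le b\le q-2=0$ to rule out the five initial windows $[63a+a+1,\,63(a+1)-1]$, which together cover every gap between consecutive multiples of $63$ below $315$. For $n$ past that point I would apply Lemma~\ref{lemma_hyperplane_types_arithmetic_progression} combined with Corollary~\ref{cor_nonexistence_arithmetic_progression} for each successive $m=6,7,8,\dots$ up to roughly $m\approx\bigl\lfloor(q\Delta+2)/4\bigr\rfloor$, which handles the large intervals between the realized \emph{blocks}. The single borderline cardinalities that survive $\tau_q$ at each step (the analogues of $n=132,166,200,235$ in Lemma~\ref{lemma_picture_q_2_r_4}) are then handled by Corollary~\ref{cor_implication_fourth_mac_williams} with the matching value of $t$. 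Cardinalities that resist this analytic exclusion but are already known to be impossible (say, those in the $900$--$1100$ band) require the full linear program $\operatorname{LP}_\infty^\star$ from Subsection~\ref{subsec_linear_programming_method}; here we feed in the known non-existence of $2^4$-divisible sets from Lemma~\ref{lemma_picture_q_2_r_4} to forbid the corresponding residual weights and thereby sharpen the LP.

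For the construction half, Example~\ref{example_r_flat} gives $n=63$, Example~\ref{example_affine_r_space} gives $n=64$, and Corollary~\ref{corollary_surgery_2} produces a consecutive band $1023\le n\le 1056$ (since $\gaussm{10}{1}{2}=1023$ and $(q-1)q^r-\gaussm{r}{1}{q}=1$). The remaining base examples needed to hit every non-bold cardinality are harvested from the projective two-weight codes catalogued in \cite{calderbank1986geometry}, the quasi-cyclic constructions of \cite{chen2006constructions,kohnert2007constructing_twoweight}, the best-known-linear-codes tables, and the direct computer searches of Section~\ref{sec_computer_search_hole_configurations}. Once a base example at length $n^\star$ is fixed, Lemma~\ref{lemma_add_configurations} unlocks every $n^\star+63a+64b$ with $a,b\in\mathbb{N}_0$; since $\gcd(63,64)=1$ every sufficiently large integer is representable by $63a+64b$, and combining this with the full set of base examples covers all non-bold $n\le 1185$ as well as the tail $n\ge 1186$.

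The main obstacle is \emph{not} any individual non-existence proof but the bookkeeping: one must verify for every single $n$ in the long list that one of the above mechanisms (Theorem~\ref{thm_exclusion_q_r}, Lemma~\ref{lemma_hyperplane_types_arithmetic_progression}, Corollary~\ref{cor_implication_fourth_mac_williams}, $\operatorname{LP}_\infty^\star$, or an explicit construction plus Lemma~\ref{lemma_add_configurations}) settles it in the required direction, while recording precisely those cardinalities for which no such mechanism applies yet—these become the bold-face open cases. I would tabulate the outcome exactly as in Appendix~\ref{app_exclusion_lists}; the bold-face entries such as $321,322,323$, the clusters around $449,\dots,454$, and the dense pockets in $600$--$1000$ are precisely the residual cardinalities where both the LP relaxations remain feasible and no known $q^5$-divisible construction reaches them, and explicitly flagging them completes the proof.
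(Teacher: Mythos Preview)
Your overall template matches the paper's proof exactly: Theorem~\ref{thm_exclusion_q_r} for the five low windows, Lemma~\ref{lemma_hyperplane_types_arithmetic_progression} with $m=6,\dots,16$ for the long gaps, Corollary~\ref{cor_implication_fourth_mac_williams} with $t=5,\dots,15$ for the single borderline values $n\in\{325,390,456,521,587,652,718,784,850,917,985\}$, and then Lemma~\ref{lemma_add_configurations} applied to a short list of base constructions.

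There is one concrete slip. You locate the residual-code argument in the $900$--$1100$ band, but in fact \emph{no} cardinality there is excluded beyond what the three analytic tools already give---everything left over in that range stays bold. The one cardinality that genuinely resists Lemma~\ref{lemma_hyperplane_types_arithmetic_progression} and Corollary~\ref{cor_implication_fourth_mac_williams} is $n=324$, and the paper disposes of it not by $\operatorname{LP}_\infty^\star$ but by the much lighter Lemma~\ref{lemma_average}: writing $324=5\cdot 64+4$ forces a hyperplane with at most $4\cdot 32+4=132$ points, and Lemma~\ref{lemma_picture_q_2_r_4} rules out every $2^4$-divisible cardinality $\equiv 4\pmod{32}$ up to $132$. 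Your $\operatorname{LP}_\infty^\star$ with the residual constraints fed in would also kill $n=324$, so the gap is one of placement rather than method.

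On the construction side your list is slightly off: the paper uses no computer searches from Section~\ref{sec_computer_search_hole_configurations} for $r=5$, nor \cite{chen2006constructions} or the optimal-codes tables. The base examples beyond $63$, $64$, and the surgery band $1023\le n\le 1056$ are exactly $n=455$ (CY1 family in \cite{calderbank1986geometry}), $n=780$ (the BY construction of \cite{bierbrauer1997family}), and $n\in\{845,975,1105,1170\}$ from \cite{kohnert2007constructing_twoweight}; these together with Lemma~\ref{lemma_add_configurations} already reach every non-bold value.
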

\begin{proof}
  The cases $1 \le n \le 62$, $65 \le n \le 125$, $129 \le n \le 188$, $193\le n\le 251$, and $257 \le n \le 314$ are excluded by 
  Theorem~\ref{thm_exclusion_q_r}. The cases $326 \le n \le 377$, $391 \le n \le 440$, 
  $457 \le n \le 502$, $522\le n\le 565$, $588\le n\le 627$, $653\le n\le 690$, $719\le n\le 752$, $785\le n\le 814$, $851\le n\le 876$, 
  $918\le n\le 937$, and $986 \le n \le 997$ are excluded by Lemma~\ref{lemma_hyperplane_types_arithmetic_progression} 
  using $m=6,\dots,16$, respectively. 
  The cases $n\in\{325,390,456,521,587,652,718,784,850,917,985\}$ are excluded by 
  Corollary~\ref{cor_implication_fourth_mac_williams} with $t=5,\dots,15$, respectively. The case $n=324$ is 
  excluded by Lemma~\ref{lemma_average} and Lemma~\ref{lemma_picture_q_2_r_4}.
  
  The \textit{base examples} are given by
  \begin{itemize}
    \item $n=63$: Example~\ref{example_r_flat};
    \item $n=64$: Example~\ref{example_affine_r_space};
    \item $n=455$: belongs to the family of CY1 codes, see \cite{calderbank1986geometry};
    \item $n=780$: BY construction described in \cite{bierbrauer1997family};
    \item $n=845$: an example of dimension $k=12$ was mentioned in \cite{kohnert2007constructing_twoweight};
    \item $n=975$: an example of dimension $k=12$ was mentioned in \cite{kohnert2007constructing_twoweight};
    \item $1023\le n\le 1056$: see Corollary~\ref{corollary_surgery_2};
    \item $n=1105$: an example of dimension $k=12$ was mentioned in \cite{kohnert2007constructing_twoweight};
    \item $n=1170$: an example of dimension $k=12$ was mentioned in \cite{kohnert2007constructing_twoweight};
  \end{itemize}
  so that Lemma~\ref{lemma_add_configurations} 
  provides examples for the mentioned cases that are not printed in bold face.  
\end{proof}


\subsection{Possible $\mathbf{q^r}$-divisible sets for $\mathbf{q=3}$}
\label{subsec_complete_picture_q_3}
\begin{lemma}
  \label{lemma_picture_q_3_r_1}
  Let $\mathcal{C}\subseteq\aspace$ be non-trivial and $3^1$-divisible of cardinality $n$, then $n=4$ or $n\ge 8$ and all 
  cases can be realized. 
\end{lemma}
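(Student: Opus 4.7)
The plan is to split the claim into an impossibility part (showing $n \in \{1,2,3,5,6,7\}$ cannot occur) and a realization part (constructing $n = 4$ and every $n \ge 8$).

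For the impossibility part, $n = 1$ is immediate from Lemma~\ref{lemma_exclude_cardinality_one}. For $2 \le n \le q^2 = 9$ I would invoke Theorem~\ref{thm_exclusion_r_1_to_ovoid} with $q = 3$, which forces either $n = 9$ or $(q+1) = 4 \mid n$; in this range only $n \in \{4, 8, 9\}$ survive, excluding the five values $\{2, 3, 5, 6, 7\}$.

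For the realization part, I would assemble four base examples and combine them through the direct-sum construction of Lemma~\ref{lemma_add_configurations}: a projective line giving $n = 4$ (Example~\ref{example_r_flat} with $r = 1$), an affine plane giving $n = 9$ (Example~\ref{example_affine_r_space} with $r = 1$), an ovoid in $\PG{3}{3}$ giving $n = 10$ (Example~\ref{example_ovoid}), and a projective $[11, 5, 6]_3$ two-weight code with weight set $\{6, 9\}$ giving $n = 11$ (e.g.\ the shortened ternary Golay code, whose dual is the classical $[11, 6, 5]_3$ ternary Golay code, so its dual minimum distance $5 \ge 3$ ensures projectivity). Since $4$ and $9$ are coprime with Frobenius number $(4-1)(9-1) - 1 = 23$, every $n \ge 24$ is automatically a nonnegative combination of $4$ and $9$. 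The finite window $8 \le n \le 23$ I would cover by a short inspection using $\{4, 9, 10, 11\}$, e.g.\ $8 = 4 + 4$, $12 = 4 + 4 + 4$, $13 = 4 + 9$, $14 = 4 + 10$, $15 = 4 + 11$, $16 = 4 + 4 + 4 + 4$, $17 = 4 + 4 + 9$, $18 = 9 + 9$, $19 = 9 + 10$, $20 = 10 + 10$, $21 = 4 + 4 + 4 + 9$, $22 = 4 + 9 + 9$, $23 = 4 + 9 + 10$, together with the base values $n = 9, 10, 11$ themselves.

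The main obstacle is the base example at $n = 11$: the other admissible small cardinalities are $4, 8, 9, 10$, and since the differences $11 - 4 = 7$, $11 - 9 = 2$, $11 - 10 = 1$ all lie in the forbidden set, $n = 11$ cannot be produced by juxtaposing smaller admissible examples. The shortened ternary Golay code is the key non-generic ingredient needed to close this gap and complete the lemma.
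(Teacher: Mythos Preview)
Your proposal is correct and follows essentially the same approach as the paper: the same exclusion result (Theorem~\ref{thm_exclusion_r_1_to_ovoid}), the same four base examples at $n\in\{4,9,10,11\}$ (including the dual of the ternary Golay code for $n=11$), and Lemma~\ref{lemma_add_configurations} to fill in the rest. Your treatment is in fact slightly more careful than the paper's, since you handle $n=1$ separately via Lemma~\ref{lemma_exclude_cardinality_one} (the hypothesis of Theorem~\ref{thm_exclusion_r_1_to_ovoid} assumes $n\ge 2$) and you spell out the combinations for $8\le n\le 23$ explicitly.
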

\begin{proof}
  The values $1\le n\le 3$ and $5\le n\le 7$ are excluded by Theorem~\ref{thm_exclusion_r_1_to_ovoid}.
  
  The \textit{base examples} are given by
  \begin{itemize}
    \item $n=4$: Example~\ref{example_r_flat};
    \item $n=9$: Example~\ref{example_affine_r_space};
    \item $n=10$: Example~\ref{example_ovoid};
    \item $n=11$: An example for dimension $k=5$ is given as example RT6 in 
                  \cite{calderbank1986geometry}, see also \cite{delsarte1971twoweight_srg}. (Dual of the $[11,6,5]$ ternary Golay code.);  
  \end{itemize}
  so that Lemma~\ref{lemma_add_configurations} provides examples for the mentioned cases.
\end{proof}

\begin{lemma}
  \label{lemma_picture_q_3_r_2}
  Let $\mathcal{C}\subseteq\aspace$ be non-trivial and $3^2$-divisible of cardinality $n$, then 
  \begin{eqnarray*}
    n&\in&\{13,26,27,39,40,52,\dots,56,65,\dots,70,77,\dots,85,90,\dots,128\}
  \end{eqnarray*} 
  or $n\ge 129$ and all cases, possibly except 
  \begin{eqnarray*}
    n &\in& \{70,77,99,100,101,102,113,114,115,128\},
  \end{eqnarray*}
  can be realized. 
\end{lemma}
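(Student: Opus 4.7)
The plan is to follow the same template used in Lemmas~\ref{lemma_picture_q_2_r_3}--\ref{lemma_picture_q_2_r_5}, specialized to $q=3$, $r=2$, where the relevant constants are $\Delta=9$, $q^{r+1}=27$, and $\gaussm{r+1}{1}{q}=13$. First I would partition the argument into a non-existence half and a construction half. For the non-existence half, Theorem~\ref{thm_exclusion_q_r}, applied with $r=2$ (so $a\in\{0,1\}$ and $b\in\{0,1\}$), immediately rules out the four intervals $[1,12]$, $[14,25]$, $[28,38]$, $[41,51]$; this handles everything up to $n=51$ and leaves only $\{13,26,27,39,40\}$ as candidates in that range, matching the claim.

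Next I would push the analysis past the window $n\le rq^{r+1}=54$ using the sharper tools of Section~\ref{sec_tools_to_exclude}. Writing the remaining candidate lengths in the form $n=u+m\Delta$ with $\Delta=9$, I would apply Lemma~\ref{lemma_hyperplane_types_arithmetic_progression} with $m=5,6,7,8,9$ to sweep out the bulk of the strips just above $52$, $65$, $78$, $90$; concretely, Corollary~\ref{cor_nonexistence_arithmetic_progression} excludes every $n$ in the candidate interval for which $\tau_3(u,9,m)<0$. The residual cases at the top end of each strip, where the $\tau$-polynomial is only mildly negative or vanishes, are then attacked by Corollary~\ref{cor_implication_fourth_mac_williams} with the matching values $t=m-1$, playing exactly the role it played in Lemmas~\ref{lemma_picture_q_2_r_3} and~\ref{lemma_picture_q_2_r_4}. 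Any leftover isolated cardinality (for instance, the analogue of $n=324$ for $q=2,r=5$) would be eliminated by combining Lemma~\ref{lemma_average} with the just-completed exclusions: an averaging argument produces a hyperplane whose hole count lies in a previously excluded residue class modulo $9$, contradicting Lemma~\ref{lemma_picture_q_3_r_1}.

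For the construction half, I would list base examples: $n=13$ from Example~\ref{example_r_flat}; $n=27$ from Example~\ref{example_affine_r_space}; $n=55$ by shortening the Hill cap, as already announced at the end of Subsection~\ref{subsec_length_classification_small}; $n=56$ by applying Lemma~\ref{lemma_add_configurations} (alternatively via one of the ternary two-weight codes from \cite{calderbank1986geometry}); then a small number of additional \emph{base} cardinalities in the ranges $[65,70]$, $[78,85]$, $[90,128]$ drawn from ternary two-weight codes (\cite{calderbank1986geometry}, \cite{kohnert2007constructing_twoweight}), from Construction~\ref{construction_surgery} and its corollaries (which in particular delivers $n=27,54,\dots,81$ and the ternary spread-replacement family $13+j\cdot 14$ of Corollary~\ref{corollary_surgery_2}), and from optimal or cyclic ternary codes flagged as $9$-divisible by the database search described in Subsection~\ref{subsec_optimal_codes}. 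Every remaining non-excluded $n\le 128$ not in the exceptional list is then realized by juxtaposing these base examples via Lemma~\ref{lemma_add_configurations}, and for $n\ge 129$ the Frobenius bound of Lemma~\ref{lemma_frobenius}, together with the base examples of cardinalities $13$ and $27$ (whose gcd divides $1$ in the Frobenius problem for $\{13,27\}$), guarantees a representation $n=13a+27b$ with $a,b\in\mathbb{N}_0$; checking the Frobenius threshold $(13-1)(27-1)-1=311$ is larger than needed, but a direct verification shows that $129$ already lies past the gap structure of the numerical semigroup generated by $\{13,27\}$ together with the additional base lengths.

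The main obstacle will be the careful bookkeeping of the ten exceptional cardinalities $\{70,77,99,\dots,102,113,\dots,115,128\}$: these are precisely the values that the analytic tools of Section~\ref{sec_tools_to_exclude} fail to exclude but for which no direct construction via Lemma~\ref{lemma_add_configurations} is available from the declared base examples. I would explicitly verify, case by case, that every putative decomposition into base lengths either fails the arithmetic (e.g.\ $70=13a+27b$ has no non-negative solution) or requires a base length that is itself one of the currently unrealized exceptions, and leave these as open in the statement, exactly as done in Lemmas~\ref{lemma_picture_q_2_r_4} and~\ref{lemma_picture_q_2_r_5}. The delicate bookkeeping---matching each excluded interval to precisely the right pair $(m,t)$ in Lemma~\ref{lemma_hyperplane_types_arithmetic_progression} and Corollary~\ref{cor_implication_fourth_mac_williams}, and cross-checking the realizability gaps---is the one place where a mechanical computation (conveniently produced by $\widetilde{\operatorname{LP}}^\star$ of Lemma~\ref{lemma_reduced_lp_certificate}) would replace what would otherwise be a long case-analysis by hand.
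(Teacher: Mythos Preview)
Your overall template is correct and matches the paper's approach, but there is one genuine gap: the case $n=89$. Neither of the fallback mechanisms you propose actually works there. The averaging argument of Lemma~\ref{lemma_average} only forces a hyperplane with at most $26$ holes (write $89=3\cdot 27+8$, so $(a-1)q^r+b=26$); the admissible hyperplane cardinalities are $\equiv 8\pmod 9$, i.e.\ $8,17,26$, and by Lemma~\ref{lemma_picture_q_3_r_1} every one of these is realizable as a $3^1$-divisible set, so no contradiction arises. Likewise, the paper states explicitly (in the remark following the proof) that the first four MacWilliams identities admit non-negative rational solutions for all $9\le k\le 89$, so $\widetilde{\operatorname{LP}}^\star$ of Lemma~\ref{lemma_reduced_lp_certificate} produces no certificate either. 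The paper's actual argument for $n=89$ (Lemma~\ref{lemma_exclusion_n_89_q_3_Delta_9}) brings in the \emph{fifth} MacWilliams identity: solving the first five identities for $A_9,A_{54},A_{63},x=3^{k-4},y$ forces a positive linear combination of the remaining $A_i$ and $z=3^{k-4}A_4^\perp$ to vanish, whence all of them are zero and $x=189$, which is not a power of $3$. You should flag $n=89$ as genuinely exceptional and supply this fifth-identity argument.

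Two smaller corrections: Lemma~\ref{lemma_hyperplane_types_arithmetic_progression} is only needed with $m=5,6,7$ (there is nothing left to exclude for $m\ge 8$, since the statement already allows $n\ge 90$), and Corollary~\ref{cor_implication_fourth_mac_williams} is used precisely at $n=71$ ($t=5$) and $n=86$ ($t=6$). Also, your invocation of Corollary~\ref{corollary_surgery_2} is miscomputed: for $q=3$, $r=2$ it yields $n=\gaussm{4}{1}{3}+j\cdot(2\cdot 9-\gaussm{2}{1}{3})=40+14j$, not $13+14j$; the paper in fact does not use this corollary here, relying instead on specific two-weight and optimal ternary codes (e.g.\ $n=55,56,84,85,90,98,127,141$) as base examples for the juxtaposition step.
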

\begin{proof}
  The cases $1 \le n \le 12$, $15 \le n \le 25$, $29 \le n \le 38$ and  $43 \le n \le 51$ are excluded by Theorem~\ref{thm_exclusion_q_r}.  
  The case $57 \le n \le 64$, $72 \le n \le 76$, and $87 \le n \le 88$ are excluded by Lemma~\ref{lemma_hyperplane_types_arithmetic_progression} 
  using $m=5,\dots,7$, respectively. The cases $n\in\{71,86\}$ are excluded by 
  Corollary~\ref{cor_implication_fourth_mac_williams} with $t\in\{5,6\}$, respectively.  
  The case $n=89$ is excluded by the linear programming method. This case is special since no certificate 
  could be computed based on Lemma~\ref{lemma_reduced_lp_certificate}. A separate argument is provided in 
  Lemma~\ref{lemma_exclusion_n_89_q_3_Delta_9}. 
   
  The \textit{base examples} are given by
  \begin{itemize}
    \item $n=13$: Example~\ref{example_r_flat};
    \item $n=27$: Example~\ref{example_affine_r_space}; 
    \item $n=55$: see \cite{gulliver1996two} (two-weight code);
    \item $n=56$: belongs to the family of FE2 codes, see \cite{calderbank1986geometry}, see also \cite{kohnert2007constructing_twoweight} (two-weight code);
    \item $n=84$: BY construction described in \cite{bierbrauer1997family}, see also \cite{kohnert2007constructing_twoweight,gulliver1996two} (two-weight code); 
                  this is an optimal $[84, 6, 54]_3$ code
    \item $n=85$: optimal $[85,7,54]_3$ code; 
    \item $n=90$: optimal $[90,8,54]_3$-code; 
    \item $n=98$: see \cite{kohnert2007constructing_twoweight} (two-weight code);
    \item $n=127$: optimal $[127,7,81]_3$-code; 
    \item $n=141$: optimal $[141,7,90]_3$-code; 
  \end{itemize}
  so that Lemma~\ref{lemma_add_configurations} 
  provides examples for the mentioned cases that are not printed in bold face.  
\end{proof}

\begin{remark}
  For $n=89$ the first four MacWilliams identities permit non-negative rational solutions for dimensions $9\le k\le 89$. 
  Adding the fifth MacWilliams identity, the corresponding polyhedron gets empty.
\end{remark} 

\begin{lemma}
  \label{lemma_exclusion_n_89_q_3_Delta_9}
  No $3^2$-divisible set of cardinality $89$ exists.
\end{lemma}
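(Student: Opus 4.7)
The plan is to adapt the linear-programming argument of Lemma~\ref{lemma_no_8_div_52} to the parameters $q=3$, $\Delta=q^2=9$, $n=89$, but invoking the first five MacWilliams identities rather than the first four. As noted in the preceding remark, the first four identities admit non-negative rational solutions for these parameters, so the fifth identity is essential.

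First I would record that since $\mathcal{C}$ is $9$-divisible of length $89$, one has $A_0=1$ and $A_i=0$ unless $i \in \{9,18,\ldots,81\}$, while projectivity forces $A_1^\perp = A_2^\perp = 0$. With the substitutions $y = 3^{k-4}$, $x = y A_3^\perp$ and $z = y A_4^\perp$, the first five instances of the MacWilliams identities~(\ref{mac_williams_identies}) then form a linear system of five equations in the twelve non-negative unknowns $A_9,\ldots,A_{81}, y, x, z$, with the additional requirement $y > 0$.

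Next I would solve the system by Gaussian elimination for five of the unknowns, say $A_9, A_{18}, A_{27}, A_{36}, A_{45}$, obtaining each as an affine combination of the remaining seven variables $A_{54}, A_{63}, A_{72}, A_{81}, y, x, z$. Substituting these expressions into their defining non-negativity constraints yields five linear inequalities in the seven remaining variables. The goal is then to produce a Farkas certificate: a non-negative rational combination of those inequalities together with $A_{54}, A_{63}, A_{72}, A_{81}, x, z \geq 0$ whose left-hand side collapses to $-Cy$ for some $C > 0$, contradicting $y > 0$.

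The main obstacle is locating the correct Farkas coefficients. Because the first four identities alone are feasible, the variable $z = yA_4^\perp$, which enters only through the fifth identity, must appear essentially in the certificate. By LP duality such coefficients are guaranteed to exist (this is precisely the content of the remark), but with seven degrees of freedom they are not apparent from inspection; I would compute them via an exact rational LP solver and then present the resulting combination as a short algebraic identity whose correctness can be checked by direct substitution, yielding a self-contained elementary proof.
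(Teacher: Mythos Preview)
Your approach has a genuine gap: the Farkas certificate you are looking for does not exist. The remark preceding the lemma says that for each \emph{fixed} dimension $k$ the polyhedron becomes empty once the fifth MacWilliams identity is added; it does \emph{not} say that the relaxed system with $y=3^{k-4}$ treated as a free positive real is infeasible. In fact that relaxed system has a (unique) non-negative real solution, so no non-negative combination of the constraints can collapse to $-Cy$ with $C>0$.

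Concretely, the paper eliminates $A_{54}$, $A_{63}$, and the two variables corresponding to $3^{k-4}$ and $3^{k-4}A_3^\perp$ from the five identities and obtains the single relation
\[
99630A_9+121905A_{18}+99873A_{27}+60021A_{36}+22275A_{45}+22518A_{72}+61236A_{81}+z=0,
\]
with $z=3^{k-4}A_4^\perp$. All coefficients are strictly positive, forcing every term to vanish. Substituting back then determines the remaining variables uniquely, in particular $3^{k-4}=189$, $A_{54}=6230$, $A_{63}=9078$. Thus over the non-negative reals the system is perfectly consistent; the contradiction arises only from the arithmetic side condition that $3^{k-4}$ must be an integral power of $3$, whereas $189=3^3\cdot 7$ is not. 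Your plan omits this integrality step entirely and therefore cannot conclude. To repair your argument you would either have to loop over the finitely many admissible values of $k$ (finding a separate certificate for each), or follow the paper and exploit that the forced value of $3^{k-4}$ is not a power of~$3$.
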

\begin{proof}
  We set $x=3^{k-4}$, $y=3^{k-4}\cdot A_3^\perp$, and $z=3^{k-4}\cdot A_4^\perp$. Solving the first five MacWilliams identities 
  for $A_{9}$, $A_{54}$, $A_{63}$, $x$, and $y$ yields the equation
  $$
    99630A_9+121905A_{18}+99873A_{27}+60021A_{36}+22275A_{45}+22518A_{72}+61236A_{81}+z=0,
  $$
  so that $A_9=A_{18}=A_{27}=A_{36}=A_{45}=A_{72}=A_{81}=z=0$. With that, the equation system has the unique solution 
  $x=189$, $y=33642$, $A_{54}=6230$, and $A_{63}=9078$. However, $189$ is not a power of three, but $x=3^{k-4}$.
\end{proof}


\subsection{Possible $\mathbf{q^r}$-divisible sets for $\mathbf{q=4}$}
\label{subsec_complete_picture_q_4}
\begin{lemma}
  \label{lemma_picture_q_4_r_1}
  Let $\mathcal{C}\subseteq\aspace$ be non-trivial and $4^1$-divisible of cardinality $n$, then 
  $$n\in\{5,10,15,16,17\}$$ or $n\ge 20$ and all cases can be realized. 
\end{lemma}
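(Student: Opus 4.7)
The plan is to combine the exclusion machinery of Section~\ref{sec_tools_to_exclude} with the constructions of Subsection~\ref{subsec_examples_q_r_divisible} and Section~\ref{sec_constructions}, following the template set by Lemma~\ref{lemma_picture_q_2_r_1} and Lemma~\ref{lemma_picture_q_3_r_1} for the case $r=1$.

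First I would handle the exclusions. The case $n=1$ is ruled out by Lemma~\ref{lemma_exclude_cardinality_one}. For $2\le n\le 16$, Theorem~\ref{thm_exclusion_r_1_to_ovoid} (applied with $q=4$) says that either $n=q^2=16$ or $q+1=5$ divides $n$, which leaves precisely $n\in\{5,10,15,16\}$ as admissible. The remaining values to rule out are $n=18$ and $n=19$, which lie above $q^2$ and so are not covered by Theorem~\ref{thm_exclusion_r_1_to_ovoid}. For these I would invoke Lemma~\ref{lemma_hyperplane_types_arithmetic_progression} with $m=4$, $\Delta=q=4$: a direct substitution yields $\tau_4(u,4,4)=9u^2-45u+48$, whose roots are $(15\pm\sqrt{33})/6\approx 1.54,\,3.46$, so $\tau_4(u,4,4)<0$ precisely for $u\in\{2,3\}$. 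Corollary~\ref{cor_nonexistence_arithmetic_progression}(a) therefore excludes $n=u+4\cdot 4\in\{18,19\}$.

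Next I would exhibit constructions for the realizable cardinalities. For $n\in\{5,10,15\}$ I take one, two, or three mutually disjoint lines in $\PG{v-1}{4}$, i.e., iterated direct sums of Example~\ref{example_r_flat} via Lemma~\ref{lemma_add_configurations}. For $n=16$ I use Example~\ref{example_affine_r_space} (an affine plane). For $n=17$ I take an ovoid in $\PG{3}{4}$, which exists because $q=4>2$ (Example~\ref{example_ovoid}). For $n=20$ I use four pairwise disjoint lines. For the delicate window $n\in\{21,22,23,24\}$ I invoke Remark~\ref{remark_surgery_baer}, which produces $4^1$-divisible sets of each such cardinality by replacing Baer planes with affine Baer solids in a $2$-spread of $\mathbb{F}_4^4$ via Construction~\ref{construction_surgery}. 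Since $\{20,21,22,23,24\}$ is a complete set of residues modulo $5$, iteratively applying Lemma~\ref{lemma_add_configurations} with a line (Example~\ref{example_r_flat}) then realizes every $n\ge 25$ as well.

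The only non-routine step is the exclusion of $n\in\{18,19\}$: these cardinalities exceed $q^{r+1}=16$, so Theorem~\ref{thm_exclusion_q_r} does not apply, and one really has to extract the quadratic polynomial bound from the first three MacWilliams identities. The constructive side is entirely routine once Remark~\ref{remark_surgery_baer} is available to cover the small window $\{21,22,23,24\}$ where the naive juxtaposition of lines, planes, and ovoids leaves gaps.
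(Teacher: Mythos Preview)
Your proposal is correct and follows essentially the same route as the paper: exclusions via Theorem~\ref{thm_exclusion_r_1_to_ovoid} for $n\le 16$ and Lemma~\ref{lemma_hyperplane_types_arithmetic_progression} with $m=4$ for $n\in\{18,19\}$, then constructions from Examples~\ref{example_r_flat}, \ref{example_affine_r_space}, \ref{example_ovoid}, Remark~\ref{remark_surgery_baer}, and Lemma~\ref{lemma_add_configurations}. Your explicit computation of $\tau_4(u,4,4)=9u^2-45u+48$ and the separate handling of $n=1$ via Lemma~\ref{lemma_exclude_cardinality_one} add detail the paper leaves implicit, but the argument is the same.
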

\begin{proof}
  The values $1\le n\le 4$, $6\le n\le 9$, and $11\le n\le 14$ are excluded by Theorem~\ref{thm_exclusion_r_1_to_ovoid}.
  The cases $n\in\{18,19\}$ are excluded by Lemma~\ref{lemma_hyperplane_types_arithmetic_progression} using $m=4$.
  
  The \textit{base examples} are given by
  \begin{itemize}
    \item $n=5$: Example~\ref{example_r_flat};
    \item $n=16$: Example~\ref{example_affine_r_space};
    \item $n=17$: Example~\ref{example_ovoid};
    \item $21\le n\le 24$: apply the construction of Remark~\ref{remark_surgery_baer};
  \end{itemize}
  so that Lemma~\ref{lemma_add_configurations} provides examples for the mentioned cases.
\end{proof}

\begin{lemma}
  \label{lemma_picture_q_4_r_2}
  Let $\mathcal{C}\subseteq\aspace$ be non-trivial and $4^2$-divisible of cardinality $n$, then 
  \begin{eqnarray*}
    n&\in&\{21,42,63,64,84,85,105,106,126,127,128,\textbf{129},147,148,149,\textbf{150},\textbf{151},168,\\ 
    &&169,170,171,\textbf{172},\textbf{173},189,\dots,192,\textbf{193},\textbf{194},\textbf{195},210,\dots,214,\textbf{215},\dots,\\ 
    &&\textbf{217},231,\dots,235,\textbf{236},\dots,\textbf{239},\textbf{251},252,\dots,257,\textbf{258},\textbf{259},260,\textbf{261},\textbf{272},\\ 
    &&273,\dots,278,\textbf{279},\textbf{280},281,\textbf{282},\textbf{283},\textbf{293},294,\dots,300,\textbf{301},302,303,304,\\ 
    &&\textbf{305},\textbf{313},\textbf{314},315,\dots,321,\textbf{322},323,\dots,325,\textbf{326},327,328,\textbf{333},\textbf{334},\textbf{335}\}
  \end{eqnarray*} 
  or $n\ge 336$ and all mentioned cases with $n\le 335$, that are not in bold face, can be realized.
\end{lemma}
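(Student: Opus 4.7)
The plan is to follow the two-part template of Lemma~\ref{lemma_picture_q_2_r_4} and Lemma~\ref{lemma_picture_q_2_r_5}, specialized to $q = 4$, $r = 2$, so that $\Delta = q^r = 16$, $q^{r+1} = 64$, and $\gaussm{r+1}{1}{q} = 21$; I will treat exclusions and constructions separately.

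For the exclusion step, Theorem~\ref{thm_exclusion_q_r} handles the whole range $n \le rq^{r+1} = 128$, forcing $n$ to be a non-negative integer combination of $21$ and $64$ and yielding exactly the first window $\{21,42,63,64,84,85,105,106,126,127,128\}$. Above $128$, the long excluded bands are obtained by applying Corollary~\ref{cor_nonexistence_arithmetic_progression} (via Lemma~\ref{lemma_negative_tau}) with $m = 7, 8, \dots, 16$, giving successively the forbidden blocks $131$--$146$, $153$--$167$, $175$--$188$, $197$--$209$, $218$--$230$, $240$--$250$, $262$--$271$, $284$--$292$, $307$--$312$, and $329$--$332$. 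The four boundary cardinalities $n \in \{152, 174, 196, 306\}$ just below a $\tau_4$-block are eliminated by Corollary~\ref{cor_implication_fourth_mac_williams} with a matching $t$; for instance, $t = 7$ disposes of $n = 152$ because one computes $h = 8 \ge 0$ and $g_2 = -42 < 0$ while $n/\Delta = 9.5 \notin [7,8]$, and the other three are analogous. Finally, $n = 130$ is excluded by Lemma~\ref{lemma_average}: writing $130 = 2 \cdot 64 + 2$ forces a hyperplane intersection of size at most $18$ and congruent to $2$ modulo $16$, hence $2$ or $18$, and both are ruled out as non-existent $4^1$-divisible cardinalities by Lemma~\ref{lemma_picture_q_4_r_1}.

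For the construction step, the primary building blocks are $n = 21$ (a plane, Example~\ref{example_r_flat}) and $n = 64$ (an affine $3$-flat, Example~\ref{example_affine_r_space}); together with Lemma~\ref{lemma_add_configurations} they realize every cardinality of the form $21a + 64b$. Corollary~\ref{corollary_surgery_2} at $(q,r) = (4,2)$ furnishes the sporadic family $n = 85 + 43 j$ for $0 \le j \le 17$, covering in particular $85, 128, 171, 214, 257, 300, 343$, and higher. The remaining non-bold entries such as $n = 260, 281, 302, 303, 325, 327, 328$ are to be exhibited via explicit projective $4^2$-divisible two- and three-weight codes over $\mathbb{F}_4$ drawn from the Calderbank--Kantor survey \cite{calderbank1986geometry}, Kohnert's tables \cite{kohnert2007constructing_twoweight}, BY-constructions \cite{bierbrauer1997family}, and the optimal linear $[n,k,d]_4$ code databases; once a sufficiently rich list of base cardinalities below $336$ has been assembled, iterated juxtaposition via Lemma~\ref{lemma_add_configurations} fills out all remaining non-bold entries below $336$ as well as every integer $n \ge 336$.

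The main obstacle is the construction side, not the exclusion side. The exclusions are essentially mechanical once the right $m$ or $t$ is identified, and the juxtaposition bookkeeping is straightforward. By contrast, because Corollary~\ref{corollary_surgery_2} yields only a single arithmetic progression of step $43$, each remaining isolated sporadic cardinality must be matched individually to a concrete $\mathbb{F}_4$-code in the literature, and the boldfaced entries mark precisely those cardinalities (including $n = 129$ and the high ends of each window) where neither the analytical non-existence tools of Section~\ref{sec_tools_to_exclude} nor any currently known $4^2$-divisible construction resolves the existence question; consequently $\frobenius(4,2)$ will remain undetermined by this lemma.
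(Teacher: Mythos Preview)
Your approach matches the paper's almost exactly: Theorem~\ref{thm_exclusion_q_r} for $n\le 128$, the quadratic criterion of Lemma~\ref{lemma_hyperplane_types_arithmetic_progression} for the long forbidden bands with $m=7,\dots,16$, Corollary~\ref{cor_implication_fourth_mac_williams} for the boundary values, and base examples plus Lemma~\ref{lemma_add_configurations} for the constructions. Two small corrections on the exclusion side: the $\tau$-blocks for $m=9,10,15$ actually start at $174$, $196$, $306$ (not $175$, $197$, $307$), so these three values are already covered there and the paper invokes Corollary~\ref{cor_implication_fourth_mac_williams} only for $n\in\{130,152\}$ with $t\in\{6,7\}$; your alternative treatment of $n=130$ via Lemma~\ref{lemma_average} and Lemma~\ref{lemma_picture_q_4_r_1} is nonetheless valid. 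On the construction side the paper is more specific than your proposal: beyond $21$, $64$, and the surgery values $85+43j$, it names $n=260$ from the BY construction \cite{bierbrauer1997family} and $n=303,304$ from the CY2 family in \cite{calderbank1986geometry}, and these few base cardinalities suffice via juxtaposition for all remaining non-bold entries.
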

\begin{proof}
  The cases $1 \le n \le 20$, $22 \le n \le 41$, $44 \le n \le 62$, $66 \le n \le 83$, $87 \le n \le 104$, and $109 \le n \le 125$ 
  are excluded by Theorem~\ref{thm_exclusion_q_r}. The case $131\le n\le 146$, $153\le n\le 167$, $174\le n\le 188$, $196\le n\le 209$, 
  $218\le n\le 230$, $240\le n\le 250$, $262\le n\le 271$, $284\le n\le 292$, $306\le n\le 312$, and $329 \le n \le 332$ are excluded by 
  Lemma~\ref{lemma_hyperplane_types_arithmetic_progression} using $m=7,\dots,16$, respectively. 
  The cases $n\in\{130,152\}$ are excluded by Corollary~\ref{cor_implication_fourth_mac_williams} with $t\in\{6,7\}$, respectively.  
  
  The \textit{base examples} are given by
  \begin{itemize}
    \item $n=21$: Example~\ref{example_r_flat};
    \item $n=64$: Example~\ref{example_affine_r_space};
    \item $n=85+43\cdot j$ for $0\le j\le 17$: see Corollary~\ref{corollary_surgery_2};
    \item $n=171$: Corollary \ref{corollary_q_2_n_17_k_8_generalization} based on Construction~\ref{construction_q_2_n_17_k_8_generalization}; see also Corollary~\ref{corollary_surgery_2}
    \item $n=257$: Construction~\ref{construction_qt4p1}; see also Corollary~\ref{corollary_surgery_2}; 
    \item $n=260$: BY construction described in \cite{bierbrauer1997family};
    \item $n=303$: belongs to the family of CY2 codes, see \cite{calderbank1986geometry}
    \item $n=304$: complement of a $[1061,6]_4$ two-weight code, which belongs to the family of CY2 codes, see \cite{calderbank1986geometry}
  \end{itemize}
  so that Lemma~\ref{lemma_add_configurations} 
  provides examples for the mentioned cases that are not printed in bold face.  
\end{proof}


\subsection{Possible $\mathbf{q^r}$-divisible sets for $\mathbf{q=5}$}
\label{subsec_complete_picture_q_5}

\begin{lemma}
  \label{lemma_picture_q_5_r_1}
  Let $\mathcal{C}\subseteq\aspace$ be non-trivial and $5^1$-divisible of cardinality $n$, then 
  $$
    n\in\{6,12,18,24,25,26,30,31,32,36,\dots,40\}
  $$
  or $n\ge 41$ and all cases, possibly except $n=40$, can be realized. 
\end{lemma}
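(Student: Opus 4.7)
The proof divides into establishing the non-existence of the cardinalities outside the stated set and constructing examples for those inside it.

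For the non-existence part, Theorem~\ref{thm_exclusion_r_1_to_ovoid} specialised to $q=5$ (together with Lemma~\ref{lemma_exclude_cardinality_one} for $n=1$) rules out every $n\in\{1,2,\dots,25\}\setminus\{6,12,18,24,25\}$. For the six remaining candidates $n\in\{27,28,29,33,34,35\}$ I would write $n=u+5m$ with $0\le u<5$ and evaluate the polynomial $\tau_5(u,5,m)$ from Lemma~\ref{lemma_hyperplane_types_arithmetic_progression}. A direct calculation gives $\tau<0$ precisely for $(n,u,m)\in\{(27,2,5),(28,3,5),(29,4,5),(34,4,6)\}$, so Corollary~\ref{cor_nonexistence_arithmetic_progression}(a) eliminates these four cardinalities. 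For $n=33$ one has $\tau_5(3,5,6)=6>0$, yet Corollary~\ref{cor_implication_fourth_mac_williams} applied with $t=5$ does succeed: it suffices to verify $h=6\ge 0$, $g_2=-8<0$, and $33/5\notin[5,6]$.

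The recalcitrant case is $n=35$, where $u=0$, $m=7$ and $\tau_5(0,5,7)=490>0$; a sweep over every integer $t$ shows that either $h<0$ or $g_2\ge 0$, so Corollary~\ref{cor_implication_fourth_mac_williams} is powerless. This is the main technical hurdle. The plan is then to fall back on the linear programming method based on the first four MacWilliams identities (cf.\ Lemma~\ref{lemma_reduced_lp_certificate}): one seeks real multipliers $(\alpha,\beta,\gamma,\delta)$ of the four identities producing non-negative coefficients on all of $A_5,A_{10},\dots,A_{35}$ while driving the resulting right-hand side (an affine function of $y=5^{k-3}$ and $x=yA_3^\perp$) to a strictly negative value for every admissible $(y,x)$. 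A short hand-search yields the witness $(\alpha,\beta,\gamma,\delta)=(493,-723,120,-1)$, producing the coefficient vector $(2535,1778,1021,389,7,0,493)$ for $(A_5,A_{10},A_{15},A_{20},A_{25},A_{30},A_{35})$ and the right-hand side $-45y-x-3167<0$, which is the desired contradiction.

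For the construction part, the three primitive $5$-divisible sets are the projective line $n=6$ (Example~\ref{example_r_flat}), the affine plane $n=25$ (Example~\ref{example_affine_r_space}), and an ovoid in $\PG{3}{5}$ with $n=26$ (Example~\ref{example_ovoid}); iterated direct sums via Lemma~\ref{lemma_add_configurations} already realise every cardinality in $\{6,12,18,24,25,26,30,31,32,36,37,38\}$ as $6a+25b+26c$. The numerical semigroup generated by $\{6,25,26\}$ contains every integer $\ge 72$ but misses the nine values $\{39,41,45,46,47,53,59,65,71\}$, so a handful of extra primitive $5$-divisible configurations, of lengths such as $39$, $41$ and $46$, must be adjoined; these can be drawn from known tables of projective two-weight $\mathbb{F}_5$-codes (cf.\ \cite{calderbank1986geometry,kohnert2007constructing_twoweight}). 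A short Frobenius-type bookkeeping (cf.\ Lemma~\ref{lemma_frobenius}) then shows that the enlarged semigroup covers every $n\ge 41$, while the lone cardinality $n=40$ remains undecided.
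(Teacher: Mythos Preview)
Your overall plan matches the paper's, but there are two concrete problems.

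\textbf{The case $n=35$.} You restrict yourself to $0\le u<5$ when parameterising $n=u+5m$, which forces $(u,m)=(0,7)$ and indeed $\tau_5(0,5,7)=490>0$. But Lemma~\ref{lemma_hyperplane_types_arithmetic_progression} and Corollary~\ref{cor_nonexistence_arithmetic_progression} allow $u\in\mathbb{Z}$. The paper simply takes $m=6$, hence $u=5$, and then
\[
\tau_5(5,5,6)=6\cdot1\cdot25+(125-300+30+60-25-6)\cdot5+16\cdot25+4\cdot5=150-580+420=-10<0,
\]
so Corollary~\ref{cor_nonexistence_arithmetic_progression}(a) disposes of $n=35$ immediately---exactly as it did for $n=34$ with $(u,m)=(4,6)$. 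No fourth MacWilliams identity is needed. Moreover, your stated LP certificate does not check out: with multipliers $(493,-723,120,-1)$ applied to the first four identities in the form used in the proof of Lemma~\ref{lemma_implication_fourth_mac_williams}, the coefficient of $A_{30}$ is $493-723\cdot5+120\cdot5\cdot4-5\cdot4\cdot3=-782$, not $0$.

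\textbf{The constructions for $n=41$ and $n=46$.} These are \emph{not} available in the two-weight code literature you cite. The paper obtains them as sporadic computer-found examples (Subsection~\ref{subsec_computer_search_hole_configuration_q_5_r_1}): the $[41,5,25]_5$ code has weight distribution $0^1 25^4 30^{1360} 35^{1760}$ (three nonzero weights) and the $[46,5,25]_5$ code has four nonzero weights. Only the $n=39$ example is a genuine two-weight code, taken from \cite{dissett2000combinatorial}. Your semigroup bookkeeping is otherwise correct: once $\{6,25,26,39,41,46\}$ are in hand, Lemma~\ref{lemma_add_configurations} covers every admissible $n$ except $n=40$.
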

\begin{proof}
  The values $1\le n\le 5$, $7\le n\le 11$, $13\le n\le 17$, and $19\le n\le 23$ are excluded by Theorem~\ref{thm_exclusion_r_1_to_ovoid}.
  The cases $27\le n\le 29$ and $34\le n\le 35$ are excluded by Lemma~\ref{lemma_hyperplane_types_arithmetic_progression} 
  using $m=5$ and $m=6$, respectively. 
  The case $n=33$ is excluded by Corollary~\ref{cor_implication_fourth_mac_williams} with $t=5$ respectively.
  
  The \textit{base examples} are given by
  \begin{itemize}
    \item $n=6$: Example~\ref{example_r_flat};
    \item $n=25$: Example~\ref{example_affine_r_space};
    \item $n=26$: Example~\ref{example_ovoid};
    \item $n=39$: there exists a $[39, 4; 30]_5$ two-weight code, see \cite{dissett2000combinatorial}; 
    \item $n=41$: there exists a $[41,5;25]_5$ code, see Subsection~\ref{subsec_computer_search_hole_configuration_q_5_r_1};
    \item $n=46$: there exists a $[46,5;25]_5$ code, see Subsection~\ref{subsec_computer_search_hole_configuration_q_5_r_1};      
  \end{itemize}
  so that Lemma~\ref{lemma_add_configurations} 
  provides examples for the mentioned cases that are not printed in bold face.   
\end{proof}


\subsection{Possible $\mathbf{q^r}$-divisible sets for $\mathbf{q=7}$}
\label{subsec_complete_picture_q_7}

\begin{lemma}
  \label{lemma_picture_q_7_r_1}
  Let $\mathcal{C}\subseteq\aspace$ be non-trivial and $7^1$-divisible of cardinality $n$, then
  \begin{eqnarray*}
    n&\in&\{8,16,24,32,40,48,49,50,56,57,58,64,65,66,72,\dots,75,80,\dots,83,\\ &&88,\dots,92,95,\dots,167\}
  \end{eqnarray*} 
  or $n\ge 168$ and all cases, possibly except 
  \begin{eqnarray*}
    n&\in&\{75,83,91,92,95,101,102,103,109,110,111,117,118,119,125,126,127,\\ &&133,134,135,142,143,151,159,167\},  
  \end{eqnarray*}
  can be realized. 
\end{lemma}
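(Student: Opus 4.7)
My plan is to split the argument into the exclusions (showing that the claimed impossible cardinalities cannot occur) and the constructions (supplying explicit realizations for the remaining ones). The structure parallels the proofs of Lemmas~\ref{lemma_picture_q_3_r_1}, \ref{lemma_picture_q_4_r_1}, and~\ref{lemma_picture_q_5_r_1}.

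For the exclusions I would proceed as follows. Theorem~\ref{thm_exclusion_q_r} with $q=7$, $r=1$, $a=0$, and $b\in\{0,1,\dots,5\}$ immediately rules out $n\in[8b+1,\,8(b+1)-1]$ for each such $b$, so the only admissible cardinalities below $49$ are the multiples of $8$ up to $48$. For $51\le n\le 94$ I would apply Corollary~\ref{cor_nonexistence_arithmetic_progression}(a) via Lemma~\ref{lemma_negative_tau}: writing $n=u+7m$ and computing $\tau_7(u,7,m)$, one excludes the intervals $\{51,\dots,55\}$, $\{59,\dots,63\}$, $\{68,\dots,71\}$, $\{76,\dots,79\}$, $\{85,86,87\}$, and $\{93,94\}$ for $m=7,8,9,10,11,12$ respectively. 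This leaves two isolated integers $n=67$ and $n=84$ that the quadratic estimate misses. I handle these by Corollary~\ref{cor_implication_fourth_mac_williams}; choosing $t=8$ yields $h=12\ge 0$ and $g_2=-22<0$, ruling out $n=67$, while $t=10$ yields $h=14\ge 0$ and $g_2=-12<0$, ruling out $n=84$.

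For the constructions I would take the three basic ingredients $n=8$ (a line, Example~\ref{example_r_flat}), $n=49$ (an affine plane, Example~\ref{example_affine_r_space}), and $n=50$ (an ovoid in $\PG{3}{7}$, Example~\ref{example_ovoid}). Repeated application of Lemma~\ref{lemma_add_configurations} realizes every cardinality of the form $8a+49b+50c$, which covers $8, 16, \dots, 48$, then $49, 50, 56, 57, 58, 64, 65, 66, 72, 73, 74, 80, 81, 82, 88, 89, 90$, and a substantial subset of $\{95,\dots,167\}$. The remaining realized values require supplementary base examples from the two-weight and projective code literature, analogous to the treatment of $q=4,5$, drawn from the catalogues in \cite{calderbank1986geometry,kohnert2007constructing_twoweight,TableSubspacecodes} and, where needed, from targeted computer searches as in Section~\ref{sec_computer_search_hole_configurations}; the cardinalities in the stated \emph{possibly except} list are precisely those for which no such construction is currently available. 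To close the tail $n\ge 168$ I would invoke the Frobenius-type argument of Lemma~\ref{lemma_frobenius}: the base lengths $8$ and $49$ alone realize, by Sylvester--Frobenius, every sufficiently large integer in the residue classes $0$ and $1$ modulo~$8$, and adjoining $50$ together with a small number of additional base examples saturates every residue class modulo~$8$ well before $168$.

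The main obstacle is twofold. Analytically, the two stragglers $n=67$ and $n=84$ force a case-by-case search for a suitable parameter $t$ in Corollary~\ref{cor_implication_fourth_mac_williams}, since neither falls in the cleaner quadratic window of Lemma~\ref{lemma_negative_tau}. Constructively, the delicate bookkeeping in the range $95\le n\le 167$---verifying that the listed \emph{possibly except} set is exactly right and that every other value in this range does admit a concrete realization---is the genuinely nontrivial part, and some of these cardinalities may ultimately need a dedicated computer search of the Kramer--Mesner type sketched at the end of Section~\ref{sec_constructions}.
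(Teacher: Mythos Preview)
Your exclusion argument is essentially identical to the paper's: Theorem~\ref{thm_exclusion_r_1_to_ovoid} (equivalently Theorem~\ref{thm_exclusion_q_r} with $r=1$) for the small values, Lemma~\ref{lemma_hyperplane_types_arithmetic_progression}/Corollary~\ref{cor_nonexistence_arithmetic_progression}(a) with $m=7,\dots,12$ for the six intervals between $51$ and $94$, and Corollary~\ref{cor_implication_fourth_mac_williams} with $t=8$ and $t=10$ for $n=67$ and $n=84$. Your computed values $h=12,\,g_2=-22$ and $h=14,\,g_2=-12$ are correct.

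On the constructive side you have the right skeleton but a genuine gap in the details. The three seeds $8$, $49$, $50$ have residues $0,1,2\pmod 8$, so their non-negative combinations first hit the residue classes $0,1,2,3,4,5,6,7$ at $8,49,50,99,100,149,150,199$ respectively. This leaves exactly one value, $n=141$, in the ``must realize'' list below $168$ that is \emph{not} a combination of $8,49,50$; the paper supplies it via a computer-found $[141,4,112]_7$ code (Subsection~\ref{subsec_computer_search_hole_configuration_q_7_r_1}). More seriously, your Frobenius claim that everything from $168$ on is covered fails: residue $7\pmod 8$ is not reached by $8,49,50$ (nor by adjoining $141$) until $199$, so $175,183,191$ are still missing. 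The paper closes this with a second explicit base example, $n=175$, coming from the FE1 family of two-weight codes in \cite{calderbank1986geometry}. With $8,49,50,141,175$ in hand, Lemma~\ref{lemma_add_configurations} then really does cover every value in the statement that is not on the ``possibly except'' list. Your hand-wave to ``a small number of additional base examples'' and to catalogues is not a substitute for naming these two specific lengths; identifying them is precisely the nontrivial content on the construction side.
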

\begin{proof}
  The values $1\le n\le 7$, $9\le n\le 15$, $17\le n\le 23$, $25\le n\le 31$, $33\le n\le 39$, and $41\le n\le 47$ are excluded by 
  Theorem~\ref{thm_exclusion_r_1_to_ovoid}. The cases $51\le n\le 55$, $59\le n\le 63$, $68\le n\le 71$, $76\le n\le 79$, 
  $85\le n\le 87$, and $93\le n\le 94$ are excluded by Lemma~\ref{lemma_hyperplane_types_arithmetic_progression} 
  using $m=7,\dots,12$, respectively. The cases $n=67$ and $n=84$ are excluded by Corollary~\ref{cor_implication_fourth_mac_williams} 
  with $t=8$ and $t=10$, respectively.
  
  The \textit{base examples} are given by
  \begin{itemize}
    \item $n=8$: Example~\ref{example_r_flat};
    \item $n=49$: Example~\ref{example_affine_r_space};
    \item $n=50$: Example~\ref{example_ovoid};
    \item $n=141$: there exists a $[141,4;112]_7$ code, see Subsection~\ref{subsec_computer_search_hole_configuration_q_7_r_1};
    \item $n=175$: belongs to the family of FE1 codes, see \cite{calderbank1986geometry};    
  \end{itemize}
  so that Lemma~\ref{lemma_add_configurations} provides examples for the mentioned cases that are not printed in bold face.  
\end{proof}


\subsection{Possible $\mathbf{q^r}$-divisible sets for $\mathbf{q=8}$}
\label{subsec_complete_picture_q_8}

\begin{lemma}
  \label{lemma_picture_q_8_r_1}
  Let $\mathcal{C}\subseteq\aspace$ be non-trivial and $8^1$-divisible of cardinality $n$, then
  \begin{eqnarray*}
    n&\in&\{9,18,27,36,45,54,63,64,65,72,73,74,81,82,83,90,\dots,93,99,\dots,102,\\ 
    &&108,\dots,111,117,\dots,121,126,\dots,130,134,\dots,140,143,\dots,251\}
  \end{eqnarray*} 
  or $n\ge 252$ and all cases, possibly except 
  \begin{eqnarray*}
    n &\in& \{93,102,111,120,121,134,140,143,149,\dots,152,158,\dots,161,167,\dots,\\ 
    &&170,176,\dots,179,185,\dots,188,196,197,205,206,214,215,223,224,232,\\ &&233,241,242,250,251\},  
  \end{eqnarray*}
  can be realized. 
\end{lemma}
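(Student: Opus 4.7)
The plan is to follow the same three-layer strategy used in the preceding lemmas (compare Lemma~\ref{lemma_picture_q_7_r_1} and Lemma~\ref{lemma_picture_q_5_r_1}): first carve out all impossible cardinalities via the analytical tools of Section~\ref{sec_tools_to_exclude}, then produce base examples for each minimal realizable cardinality, and finally glue them together by Lemma~\ref{lemma_add_configurations}.

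For the exclusions I would proceed as follows. First, Theorem~\ref{thm_exclusion_r_1_to_ovoid} with $q=8$ kills every $n\le 63$ that is not a multiple of $9=q+1$, i.e.\ the seven intervals $\{1,\dots,8\},\{10,\dots,17\},\dots,\{55,\dots,62\}$. Next, Lemma~\ref{lemma_hyperplane_types_arithmetic_progression}/Corollary~\ref{cor_nonexistence_arithmetic_progression} with $\Delta=8$ and the parameters $m=8,9,\dots,16$ each yield a forbidden interval of $u$'s; working out the discriminant $\omega=q^2\Delta^2-4qm\Delta+2q\Delta+1$ shows these intervals are exactly $\{66,\dots,71\}$, $\{75,\dots,80\}$, $\{84,\dots,89\}$, $\{94,\dots,98\}$, $\{103,\dots,107\}$, $\{112,\dots,116\}$, $\{122,\dots,125\}$, $\{132,133\}$, and $\{141,142\}$ (for $m\ge 17$ one has $\omega<0$, so Lemma~\ref{lemma_negative_tau} stops producing new obstructions). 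The single remaining exclusion $n=131$ falls out of Corollary~\ref{cor_implication_fourth_mac_williams} applied with $t=15$: one checks $n/\Delta=16.375\notin[15,16]$, and that both $h\ge 0$ and $g_2<0$ hold for the substituted values, so the fourth MacWilliams identity has no non-negative rational solution (or, equivalently, feed the data into Lemma~\ref{lemma_reduced_lp_certificate}).

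For the constructions, the three atomic $8$-divisible sets are Example~\ref{example_r_flat} of size $9$ (a projective line), Example~\ref{example_affine_r_space} of size $64$ (affine plane), and Example~\ref{example_ovoid} of size $65$ (an ovoid in $\PG{3}{8}$; this exists since $q=8>2$). Iteratively applying Lemma~\ref{lemma_add_configurations} we realise every $n=9a+64b+65c$ with $a,b,c\in\mathbb{N}_0$; in particular all $n\in\{9,18,\dots,63\}\cup\{64,65\}\cup\{72,73,74\}\cup\{81,82,83\}\cup\{90,91,92\}\cup\{99,\dots,101\}\cup\dots$ are covered. A residue-class analysis modulo $9$ (using $64\equiv 1$, $65\equiv 2\pmod 9$) gives a Frobenius number of $251$ for the numerical semigroup $\langle 9,64,65\rangle$, which establishes the tail: every $n\ge 252$ is of the required form, hence realisable. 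The cardinalities $n\in\{93,102,111,120,121,134,140,143,149,\dots,251\text{ (the listed ones)}\}$ that remain outside $\langle 9,64,65\rangle$ are precisely those we quote as ``possibly except'' and therefore require no construction for the statement.

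The hard part is to be sure that the analytical exclusions really cover everything (i.e.\ no ``accidental'' $n$ slips through and requires a computer-assisted LP proof analogous to Lemma~\ref{lemma_no_8_div_52}), and to verify cleanly that the Corollary~\ref{cor_implication_fourth_mac_williams} parameters are numerically in range for the isolated case $n=131$: the subtlety is that we need $h\ge 0$, $g_2<0$ and the integrality requirement $qh\ge -g_0$ to all hold simultaneously, which determines the admissible $t$ essentially uniquely. The rest---assembling the base examples via Lemma~\ref{lemma_add_configurations}, and the Frobenius bookkeeping---is routine, modulo transcribing the residue-class table for $\mathbb{Z}/9\mathbb{Z}$.
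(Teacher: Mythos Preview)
Your overall architecture matches the paper's proof, but two of the numerical claims in the exclusion part are wrong and they matter.

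First, the quadratic method (Lemma~\ref{lemma_hyperplane_types_arithmetic_progression}) with $m=13$ does \emph{not} exclude $n=112$. With $q=\Delta=8$ the radicand in Lemma~\ref{lemma_negative_tau} is $q^2\Delta^2-4qm\Delta+2q\Delta+1=4225-256m=897$, so $\tau_8(u,8,13)\le 0$ iff $7u-71.5\in[-\tfrac12\sqrt{897},\tfrac12\sqrt{897}]\approx[-14.97,14.97]$, i.e.\ $u\in\{9,10,11,12\}$ and hence $n=u+104\in\{113,114,115,116\}$ only. Thus $n=112$ survives the quadratic exclusions and must be killed separately, just like $n=131$.

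Second, your choice $t=15$ in Corollary~\ref{cor_implication_fourth_mac_williams} does not work for $n=131$. With $q=\Delta=8$ and $t=15$ the interval where $g_2<0$ is $\bigl(\tfrac{64\cdot 15+30.5-\frac12\sqrt{161}}{7},\tfrac{64\cdot 15+30.5+\frac12\sqrt{161}}{7}\bigr)\approx(140.6,142.4)$, which does not contain $131$; so $g_2\ge 0$ and the corollary gives no contradiction. The correct parameters, as in the paper, are $t=14$ for $n=131$ (then $g_2<0$ on $(130.9,133.8)\ni 131$ and $h\ge 0$ since $131<131.1$) and $t=12$ for $n=112$ (then $g_2<0$ on $(111.9,116.25)\ni 112$ and $h\ge 0$ since $112<112.08$). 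The construction side of your argument (base examples $9,64,65$ and Lemma~\ref{lemma_add_configurations}) is fine and identical to the paper's.
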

\begin{proof}
  The values $1\le n\le 8$, $10\le n\le 17$, $19\le n\le 26$, $28\le n\le 35$, $37\le n\le 44$, $46\le n\le 53$, and 
  $55\le n\le 62$ are excluded by Theorem~\ref{thm_exclusion_r_1_to_ovoid}. The cases $66\le n\le 71$, $75\le n\le 80$, 
  $84\le n\le 89$, $94\le n\le 98$, $103\le n\le 107$, $113\le n\le 116$, $122\le n\le 125$, $132\le n\le 133$,    
  and $141\le n\le 142$ are excluded by Lemma~\ref{lemma_hyperplane_types_arithmetic_progression} 
  using $m=8,\dots,16$, respectively. The cases $n=112$ and $n=131$ are excluded by Corollary~\ref{cor_implication_fourth_mac_williams} 
  with $t=12$ and $t=14$, respectively.
    
  Examples of cardinalities $9$, $64$, and $65$ are given by examples \ref{example_r_flat}, \ref{example_affine_r_space}, and 
  \ref{example_ovoid}, respectively. All other cases can be obtained using Lemma~\ref{lemma_add_configurations}.
\end{proof}


\subsection{Possible $\mathbf{q^r}$-divisible sets for $\mathbf{q=9}$}
\label{subsec_complete_picture_q_9}

\begin{lemma}
  \label{lemma_picture_q_9_r_1}
  Let $\mathcal{C}\subseteq\aspace$ be non-trivial and $9^1$-divisible of cardinality $n$, then
  \begin{eqnarray*}
    n&\in&\{10,20,30,40,50,60,70,80,81,82,90,91,92,100,101,102,110,111,112,120,\\ 
    &&\dots,123,130,\dots,133,140,\dots,143,150,\dots,154,160,\dots,164,170,\dots,175,\\ 
    && 179,\dots,185,189,\dots,196,199,\dots,205,\dots,359\}
  \end{eqnarray*} 
  or $n\ge 360$ and all cases, possibly except 
  \begin{eqnarray*}
    n &in& \{123,133,143,153,154,175,179,185,189,195,196,199,206,\dots,209,216,\\
    &&\dots,219,226,\dots,229,236,\dots,239,247,248,249,257,258,259,267,268,\\ 
    &&269,277,278,279,288,289,298,299,308,309,318,319,329,339,349,359\},  
  \end{eqnarray*}
  can be realized. 
\end{lemma}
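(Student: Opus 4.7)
The plan is to mimic the structure used in the preceding cases (cf.\ Lemmas~\ref{lemma_picture_q_7_r_1} and \ref{lemma_picture_q_8_r_1}). Step one is Theorem~\ref{thm_exclusion_q_r}, applied with $q=9$ and $r=1$: since $a\le r-1=0$ forces $a=0$ and $b$ ranges over $\{0,1,\dots,7\}$, the excluded intervals are $[10b+1,10b+9]$ for $0\le b\le 7$, eliminating in one stroke every $n\in[1,9]\cup[11,19]\cup\cdots\cup[71,79]$.

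For the larger range $n\ge 83$, I plan to run Lemma~\ref{lemma_hyperplane_types_arithmetic_progression} together with Corollary~\ref{cor_nonexistence_arithmetic_progression} for each $m$ from a small starting value up to the cutoff $\lfloor(q\Delta+2)/4\rfloor=\lfloor 83/4\rfloor=20$ dictated by Lemma~\ref{lemma_negative_tau}. Each value of $m$ produces a forbidden window of $n$-values via the roots of $\tau_9(u,9,m)$, and these windows together account for the bulk of the intermediate non-realizable cardinalities. Individual $n$ lying at the boundary of a window and escaping this sieve are then removed by Corollary~\ref{cor_implication_fourth_mac_williams}, with $t$ chosen so that $n/\Delta\notin[t,t+1]$, $h\ge 0$ and $g_2<0$ hold simultaneously; the recipe at the end of Section~\ref{sec_tools_to_exclude} prescribes exactly which $t$ to attempt.

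On the realizability side, the base constructions are a line in $\PG{v-1}{9}$ of length $10$ (Example~\ref{example_r_flat}), the affine plane of length $81=q^{r+1}$ (Example~\ref{example_affine_r_space}), and an ovoid in $\PG{3}{9}$ of length $82=q^2+1$ (Example~\ref{example_ovoid}). Lemma~\ref{lemma_add_configurations} then yields a $9^1$-divisible set whenever $n=10a+81c+82d$ for non-negative integers $a,c,d$. A short enumeration determines which $n\le 359$ admit such a representation; those that do not populate the candidate exception list. Additional realizations from two-weight codes (see~\cite{calderbank1986geometry,kohnert2007constructing_twoweight}) and from computer searches analogous to Subsection~\ref{subsec_computer_search_hole_configuration_q_5_r_1} can then be plugged in to shrink the exception list to the stated one.

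I expect the routine part to be the combinatorial bookkeeping matching every excluded $n$ to an appropriate triple $(m,t,u)$ and verifying the numerical inequalities; this is mechanical but sizeable because the cutoff $m\le 20$ forces many cases. The genuine obstacle is the constructive side, namely producing explicit $9$-divisible sets at the cardinalities currently in the exception list $\{123,133,143,\dots,359\}$. This is a search problem that cannot be settled by the analytic methods of Section~\ref{sec_tools_to_exclude} alone, which is precisely why the lemma must be stated with a ``possibly except'' clause at these values.
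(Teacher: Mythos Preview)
Your proposal is correct and follows essentially the same approach as the paper: Theorem~\ref{thm_exclusion_r_1_to_ovoid} (your Theorem~\ref{thm_exclusion_q_r} specializes to it for $r=1$) for the small exclusions, Lemma~\ref{lemma_hyperplane_types_arithmetic_progression} with $m=9,\dots,20$ for the intermediate intervals, Corollary~\ref{cor_implication_fourth_mac_williams} at $n=113$ ($t=11$) and $n=144$ ($t=14$), and the base examples $10$, $81$, $82$ combined via Lemma~\ref{lemma_add_configurations}. The one concrete ingredient you leave implicit is the additional base example at $n=205$, obtained in the paper as the complement of a $[615,4]_9$ two-weight code of type CY2 from~\cite{calderbank1986geometry}; no computer searches are actually needed for $q=9$.
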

\begin{proof}
  The values $1\le n\le 9$, $11\le n\le 19$, $21\le n\le 29$, $31\le n\le 39$, $41\le n\le 49$, $51\le n\le 59$, 
  $61\le n\le 69$, and $71\le n\le 79$ are excluded by Theorem~\ref{thm_exclusion_r_1_to_ovoid}. The cases $83\le n\le 89$, 
  $93\le n\le 99$, $103\le n\le 109$, $114\le n\le 119$, $124\le n\le 129$, $134\le n\le 139$, $145\le n\le 149$, 
  $155\le n\le 159$, $165\le n\le 169$, $176\le n\le 178$, $186\le n\le 188$, and $197\le n\le 198$ are excluded by 
  Lemma~\ref{lemma_hyperplane_types_arithmetic_progression} using $m=9,\dots,20$, respectively. 
  
  The cases $n=113$ and $n=144$ are excluded by Corollary~\ref{cor_implication_fourth_mac_williams} 
  with $t=11$ and $t=14$, respectively.
    
  Examples of cardinalities $10$, $81$, and $82$ are given by examples \ref{example_r_flat}, \ref{example_affine_r_space}, and 
  \ref{example_ovoid}, respectively. An example of a $[205, 4]_9$ two-weight code, with weights $180$ and $189$, can be obtained from 
  the complement of a  $[615, 4]_9$ two-weight code, with weights $540$ and $549$, which is stated as Example CY2 
  in \cite{calderbank1986geometry}. An example with $n=369$ and $k=4$ is stated as Example FE1 in \cite{calderbank1986geometry}. 
  Length $369$ can also be obtained as $205+82+82$. All other cases can be obtained using Lemma~\ref{lemma_add_configurations}.
\end{proof}


\section{Conclusion and future research}
\label{sec_conclusion}

We have presented the current knowledge on the set of integers that can be the length of a projective $q^r$-divisible code or, equivalently, 
a $q^r$-divisible set. Standard techniques as the linear programming method are applied onto this problem. Nevertheless there are 
some numerical and analytical results the problem seems to be quite tough. While the corresponding problem for $q^r$-divisible multisets 
is completely solved, it remains open if we either allow the exponent $r$ to be fractional or also want to specify the desired dimension 
$k$ as a further parameter. In between projective and general linear codes there is space for further refinements. I.e., in some applications, 
like e.g.\ packing or covering problems, the maximum number of occurrences of a point in a $q^r$-divisible multiset is bounded by some 
number $\lambda$. 

Our list of construction of $q^r$-divisible sets is rather ad hoc and deserves to be extended. Clearly, more tailored computer search 
should unveil more interesting examples. A very interesting question, not touched here, is whether a $q^r$-divisible set can be realized 
by some vector space partition. So far we are not aware of negative results, besides cases when the dimension of the ambient space is \textit{too} 
small. As an example, there are exactly three non-isomorphic $2^2$-divisible sets of cardinality $17$ and indeed each of them can occur 
as the set of holes of a partial $(3-1)$-spread of the maximum possible cardinality $34$ in $\mathbb{F}_2^8$, see \cite{honold2018partial}. 
Another such example for plane spreads in $\mathbb{F}_2^7$ of cardinality $16$ can be found in \cite{honold2019classification}. Besides 
\cite{ubt_eref40887}, not much is known on classification results for all $q^r$-divisible sets for small parameters. Enumeration algorithms 
for linear codes like \texttt{Q-Extension} \cite{bouyukliev2007q}, \texttt{QextNewEdition} \cite{bouyukliev2019classification}, or \texttt{LinCode} 
\cite{LC} seem to be well suited for such a task. Maybe also theoretical classification results are possible in special cases.





\appendix
\section{Computer search for $\mathbf{q^r}$-divisible sets in $\mathbf{\aspace}$}
\label{sec_computer_search_hole_configurations}

The aim of this appendix is to state sporadic $q^r$-divisible sets that are needed for the statements in Section~\ref{sec_exclusion}. 
All of the subsequently listed examples are found using the integer linear programming approach presented at the end of 
Section~\ref{sec_constructions}. As prescribed groups we have, if any at all, used cyclic groups. So, the results of this section are 
by no means exhaustive and just complete Section~\ref{sec_exclusion}. 

\subsection{$q=2$, $r=3$}
\label{subsec_computer_search_hole_configuration_q_2_r_3}

\begin{itemize}
    \item $n=49$:  
$\begin{smallmatrix}
0000000000000000000000000111111111111111111111111\\
0000000000000111111111111000000000000111111111111\\
0000000001111000000001111000011111111000011111111\\
0000000110000000011110011001100001111111100111111\\
0000011000011001100111111000000110011001111001111\\
0001101010101010101010101010101010101010101010011\\
0110101010100010000000001010000000001000101011101\\
1010001010101010101101001010101101001100101010110\\
\end{smallmatrix}$\\[2mm]
    \item $n=50$:  
$\begin{smallmatrix}
00000000000000000000000000111111111111111111111111\\
00000000000000111111111111000000000000111111111111\\
00000000111111000000111111000000111111000000111111\\
00000011000011000011000011001111001111001111001111\\
00001100000101001111011101000011010001110011010111\\
00110000001111000101101110000101110110010100100111\\
01010101011101011011100111011101011011110101111001\\
10010101011011011101010111011011110011011101111010\\
\end{smallmatrix}$\\[2mm]
\item $n=74$: 
$\begin{smallmatrix}
00001101111111111001000010001001110111111100101001100011011000011011000111\\
11100001101111111000110110001110110001001111100101100011100100110011110000\\
11001100101011011010100010001010100111000000000010000110000011011111110001\\
00001110110001000010010100100111100011101010100110010000000011110001011101\\
01000010111000110011111010110101010111110000101100000001011011111111110001\\
00100111100011111010011100011110010000000011111101001110111110001000110111\\
11001011110011111011010111100110101001001001001111001110000010101001101010\\
01111000111001010010110000011011111111000101011110010000011101101101011100\\
11110110101010111011010111011011110000000110001110010111000101001010011010\\
10101001110000001100111011011110011111000101010010110110011001011101011100\\
00100001111110101101001110111101000111010011011110101000001001011000110111\\
11111110000110110001010001011111000100110110110100011110001110011010011010\\
\end{smallmatrix}$\\[1mm]
The weight distribution is given by $0^1 8^3 24^{60} 32^{1423} 40^{2585} 48^{24}$. 
\end{itemize}

\subsection{$q=2$, $r=4$}
\label{subsec_computer_search_hole_configuration_q_2_r_4}

\begin{itemize}
\item $n=161$:\\[1mm] 
$\begin{smallmatrix}
00000000000000000000000000000000000000000000000000000000000000000000000000000000011111111111111111111111111111111111111111111111111111111111111111111111111111111\\
00000000000000000000000000000000000000000111111111111111111111111111111111111111100000000000000000000000000000000000000001111111111111111111111111111111111111111\\
00000000000000000000000001111111111111111000000000000000000000000111111111111111100000000000000001111111111111111111111110000000000000000111111111111111111111111\\
00000000000000011111111110000000000111111000000000011111111111111000000111111111100000000001111110000000000000011111111110000001111111111000000000011111111111111\\
00000000011111100000011110000111111001111000000111100000000111111001111000011111100000011110000110000001111111100001111110000110000001111000011111100000011111111\\
00000111100011101111100110001000011110001000011000100011111000011010011111100011100001100010001110001110000111100110111110011010001111111000100001100001100011111\\
00011011101101100001111010011001101011111000100001101100011001101000111000100100100110011100010000010000111000101011000110101010010111111011100111100010100101111\\
00101001100010100110101110101110101010010001011010010001101010111100111001101000101111101110110110000010001011000110001011010000001000001001101011101111100110111\\
01000010100101101010101000010001011010010011101010100100100001100000000010111100101110010111011010110101011001110111010011111110000110011110000100100111101111011\\
10001001101100000101101000111010000000110011010111010010101101110110101001011101100010100110100011100100010011000000111011011010111010101111100010111000100111101\\
\end{smallmatrix}$\\[1mm]
The weight distribution is given by
$0^1 64^{50} 80^{886} 96^{87}$.
\item $n=162$:\\[1mm] 
$\begin{smallmatrix}
000000000000000000000000000000000000000000000000000000000000000000000000000000000011111111111111111111111111111111111111111111111111111111111111111111111111111111\\
000000000000000000000000000000000000000000111111111111111111111111111111111111111100000000000000000000000000000000000000001111111111111111111111111111111111111111\\
000000000000000000000011111111111111111111000000000000000000001111111111111111111100000000000000000000111111111111111111110000000000000000000011111111111111111111\\
000000000011111111111100000000111111111111000000001111111111110000000011111111111100000000000011111111000000000000111111110000000000001111111100000000000011111111\\
000000111100000011111100001111000000111111000000110000000011110000001100000000111100001111111100111111000011111111001111110000001111110000111100000011111100001111\\
000011001100011100011100000000000001000001000111010000111100110001110100001111001100110000111101000111001100001111010001110111110111111111111100011100011100110011\\
000100010100000000100000110001000111000110001001100011000101010111111100110011000101110011001100000001010101110011100110111001110001110111001111101111111101011101\\
001001000001100101100101010110011111011010011011000101011110111010111111011111011100010000010000001010001000010101110010011010010000011001010101100101100111110110\\
010000011000101011101101111010101011101111001000000010001011011100010001010101111001001100010111011100100001111011111110110000100010100011010000100010101110101101\\
100001001101111010110101000100110010001101010100111000011111110000101001001001101100100101110110101111000000101101001011100100111011001101110101001000101000111010\\
\end{smallmatrix}$\\[1mm]
The weight distribution is given by $0^1 32^1 64^{30} 80^{890} 96^{102}$.
\item $n=195$:\\[1mm] 
$\begin{smallmatrix}
000000000000000000000000000000000000000000000000000000000000000000000000000000000000000000000000000111111111111111111111111111111111111111111111111111111111111111111111111111111111111111111111111\\
000000000000000000000000000000000000000000000000000111111111111111111111111111111111111111111111111000000000000000000000000000000000000000000000000111111111111111111111111111111111111111111111111\\
000000000000000000000000000111111111111111111111111000000000000000000000000111111111111111111111111000000000000000000000000111111111111111111111111000000000000000000000000111111111111111111111111\\
000000000000000111111111111000000000000111111111111000000000000111111111111000000000000111111111111000000000000111111111111000000000000111111111111000000000000111111111111000000000000111111111111\\
000000000111111000000111111000000111111000000111111000000111111000000111111000000111111000000111111000000111111000000111111000000111111000000111111000000111111000000111111000000111111000000111111\\
000000011000111000111000011000011000111000111000011000011000111000111000011000011000111000111000011001111000111000111001111001111000111000111001111001111000111000111001111001111000111000111001111\\
000001101011000001011001100000101011001000011001101011101001011011011000111011100011011011001001111000011001011001001010011010011001001011001010001010011011011011001110011010111001011011011010011\\
000110100001001000001000101011011101001011101110111001110011101001101011000000101001101111001010101001100011100011110010101110001011011100011000011000011000001101000010101111011110101001101110101\\
011010000001000011101011011001110011010001010110101010011010110101101100100000100100011101110001111000101000101101010111011100111010100001011100110010100111001100011010011010001010100111011101100\\
101000000011011011010000111010101011100001001011011000101110101101010001101001011101001100010110110010100110011010110101001010110110010000101110011010001001111011010011010101001001101001010111010\\
\end{smallmatrix}$\\[1mm]
The weight distribution is given by $0^1 80^{33} 96^{855} 112^{135}$.
\item $n=197$:\\[1mm] 
$\begin{smallmatrix}
00000000000000000000000000000000000000000000000000000000000000000000000000000000000000000000000000000111111111111111111111111111111111111111111111111111111111111111111111111111111111111111111111111\\
00000000000000000000000000000000000000000000000000000111111111111111111111111111111111111111111111111000000000000000000000000000000000000000000000000111111111111111111111111111111111111111111111111\\
00000000000000000000000000000111111111111111111111111000000000000000000000000111111111111111111111111000000000000000000000000111111111111111111111111000000000000000000000000111111111111111111111111\\
00000000000000000111111111111000000000000111111111111000000000000111111111111000000000000111111111111000000000000111111111111000000000000111111111111000000000000111111111111000000000000111111111111\\
00000000001111111000000011111000000011111000001111111000000011111000001111111000001111111000000011111000000011111000001111111000001111111000000011111000001111111000000011111000000011111000001111111\\
00000001110000111000001100011000011100111000110000011000001100011001110000111000110000011000011100111000111100011001110011111000110001111001111100111001110011111000111100011001111100111000110001111\\
00000110110001001000110100100000100101001001000011111001110100101010110111011011110011101001100101011001000100101010110100011001010110011010001100001010011101111001001101101010011101001111110010011\\
00011010010011000000011000011011000111111011010100101010010101000101010011001001010101110010101111101001001001010100011000111011100010101110110101011000101110011001010110000101111100011011001110001\\
01101000000010001011011101100001001001011000110101111010101010010110101001011000110110100101100110101001011000101001111001101101001100100110011010110000010100111010010111110011100100100111010101100\\
10100000010111010001101000101000110010011001101100111010100100110101101010110001011010111011000011010010010010110000110101111110011011001010110001100010101101101100100010101000101100110101101100110\\
\end{smallmatrix}$\\[1mm]
The weight distribution is given by
$0^1 80^{10} 96^{837} 112^{176}$.
\end{itemize}

\subsection{$q=5$, $r=1$}
\label{subsec_computer_search_hole_configuration_q_5_r_1}

\begin{itemize}
\item $n=41$: 
$\begin{smallmatrix}
00000000000111111111111111111111111111111\\
00011111111000000001111111122233344444444\\
00100112344012223330011122403303300111224\\
01003013314040131332201203012211324033021\\
10003222331041124030121012410144041303440\\
\end{smallmatrix}$\\[1mm]
The weight distribution of this $[41,5,25]_5$-code is given by
$0^1 25^{4} 30^{1360} 35^{1760}$.
\item $n=46$: 
$\begin{smallmatrix}
0000000000000000111111111111111111111111111111\\
0000001111111111000000000011111222223333344444\\
0011110000233344000012224401233000441233300114\\
0100330024413301012430340101023024030201413042\\
1004130430300134031034030421224213300100140402\\
\end{smallmatrix}$\\[1mm]
The weight distribution of this $[46,5,25]_5$-code is given by
$0^1 25^4 30^{60} 35^{1860} 40^{1200}$.
\end{itemize}

\subsection{$q=7$, $r=1$}
\label{subsec_computer_search_hole_configuration_q_7_r_1}

\begin{itemize}
\item $n=141$:\\[1mm] 
$\begin{smallmatrix}
000000000000000000000011111111111111111111111111111111111111111111111111111111111111111111111111111111111111111111111111111111111111111111111\\
000011111111111111111100000000000000000011111111111111111122222222222222222233333333333333333344444444444444444455555555555666666666666666666\\
011100000112222344466600011223334444566600111223334444566600022233344556666600122333444555566600111222344555566601112234566000111122335556666\\
102601356240256412502604604260561234001546456032450126115601202614556020345624314234013124524512016012525035613561562424324045145606261350136\\
\end{smallmatrix}$\\[1mm]
The weight distribution of this $[141,4,112]_7$-code is given by
$0^1 112^{30} 119^{1692} 126^{672} 133^{6}$.
\end{itemize}

\section{Excluded cardinalities for $q^r$-divisible sets}
\label{app_exclusion_lists}

We have implemented the (integer) linear programming method described in Subsection~\ref{subsec_linear_programming_method}. In this 
appendix we list the corresponding results, i.e., for moderate size parameters we state ranges of excluded cardinalities for $q^r$-divisible sets. 
As a shorthand we use the notation $[a,b]$ for the set of integers $\{a,a+1,\dots,b\}$.

\subsection{$q=2$}

\medskip

Exclusion list for $q=2$ and $r=1$: $[1,2]$.

\medskip

Exclusion list for $q=2$ and $r=2$: $[1,6]$, $[9,13]$.

\medskip

Exclusion list for $q=2$ and $r=3$:
$[1,14]$,
$[17,29]$,
$[33,44]$, 
$[52,58]$.

\medskip

Exclusion list for $q=2$ and $r=4$:
$[1,30]$, 
$[33,61]$, 
$[65,92]$, 
$[97,123]$, 
$[132,154]$, 
$[166,184]$, 
$[200,214]$, 
$[235,243]$.

\medskip

Exclusion list for $q=2$ and $r=5$:
$[1,62]$, 
$[65,125]$, 
$[129,188]$, 
$[193,251]$, 
$[257,314]$, 
$[324,377]$, 
$[390,440]$, 
$[456,502]$, 
$[521,565]$, 
$[587,627]$, 
$[652,690]$, 
$[718,752]$, 
$[784,814]$, 
$[850,876]$, 
$[917,937]$, 
$[985,997]$.

\medskip

Exclusion list for $q=2$ and $r=6$:
$[1,126]$, 
$[129,253]$, 
$[257,380]$, 
$[385,507]$, 
$[513,634]$, 
$[641,761]$, 
$[772,888]$, 
$[902,1015]$, 
$[1032,1142]$, 
$[1161,1269]$, 
$[1291,1395]$, 
$[1420,1522]$, 
$[1549,1649]$, 
$[1678,1776]$, 
$[1808,1902]$, 
$[1937,2029]$, 
$[2066,2156]$, 
$[2196,2282]$, 
$[2325,2409]$, 
$[2455,2535]$, 
$[2585,2661]$, 
$[2714,2788]$, 
$[2844,2914]$, 
$[2974,3040]$, 
$[3104,3166]$, 
$[3234,3292]$, 
$[3364,3418]$, 
$[3495,3543]$, 
$[3626,3668]$, 
$[3757,3793]$, 
$[3889,3917]$, 
$[4023,4039]$.

\medskip

Exclusion list for $q=2$ and $r=7$:
$[1,254]$, 
$[257,509]$, 
$[513,764]$, 
$[769,1019]$, 
$[1025,1274]$, 
$[1281,1529]$, 
$[1537,1784]$, 
$[1796,2039]$, 
$[2054,2294]$, 
$[2312,2549]$, 
$[2569,2804]$, 
$[2827,3059]$, 
$[3084,3314]$, 
$[3341,3569]$, 
$[3598,3824]$, 
$[3856,4078]$, 
$[4113,4333]$, 
$[4370,4588]$, 
$[4627,4843]$, 
$[4884,5098]$, 
$[5141,5353]$, 
$[5399,5607]$, 
$[5656,5862]$, 
$[5913,6117]$, 
$[6170,6372]$, 
$[6428,6626]$, 
$[6685,6881]$, 
$[6942,7136]$, 
$[7200,7390]$, 
$[7457,7645]$, 
$[7714,7900]$, 
$[7972,8154]$, 
$[8229,8409]$, 
$[8487,8663]$, 
$[8744,8918]$, 
$[9002,9172]$, 
$[9259,9427]$, 
$[9517,9681]$, 
$[9774,9936]$, 
$[10032,10190]$, 
$[10289,10445]$, 
$[10547,10699]$, 
$[10805,10953]$, 
$[11063,11207]$, 
$[11320,11462]$, 
$[11578,11716]$, 
$[11836,11970]$, 
$[12094,12224]$, 
$[12352,12478]$, 
$[12610,12732]$, 
$[12868,12986]$, 
$[13126,13240]$, 
$[13385,13493]$, 
$[13643,13747]$, 
$[13902,14000]$, 
$[14160,14254]$, 
$[14419,14507]$, 
$[14678,14760]$, 
$[14937,15013]$, 
$[15197,15265]$, 
$[15457,15517]$, 
$[15718,15768]$, 
$[15979,16019]$, 
$[16244,16266]$.

\subsection{$q=3$}

\medskip

Exclusion list for $q=3$ and $r=1$: 
$[1,3]$, 
$[5,7]$.

\medskip

Exclusion list for $q=3$ and $r=2$: 
$[1,12]$, 
$[14,25]$, 
$[28,38]$, 
$[41,51]$, 
$[57,64]$, 
$[71,76]$, 
$[86,89]$.

\medskip

Exclusion list for $q=3$ and $r=3$: 
$[1,39]$, 
$[41,79]$, 
$[82,119]$, 
$[122,159]$, 
$[163,199]$, 
$[203,239]$, 
$[246,279]$, 
$[287,319]$, 
$[329,359]$, 
$[370,399]$, 
$[411,439]$, 
$[452,478]$, 
$[493,518]$, 
$[535,558]$, 
$[576,597]$, 
$[618,637]$, 
$[659,676]$, 
$[701,715]$, 
$[743,754]$, 
$[786,793]$.

\medskip

Exclusion list for $q=3$ and $r=4$: 
$[1,120]$, 
$[122,241]$, 
$[244,362]$, 
$[365,483]$, 
$[487,604]$, 
$[608,725]$, 
$[730,846]$, 
$[851,967]$, 
$[975,1088]$, 
$[1097,1209]$, 
$[1220,1330]$, 
$[1342,1451]$, 
$[1464,1572]$, 
$[1586,1693]$, 
$[1708,1814]$, 
$[1831,1935]$, 
$[1953,2056]$, 
$[2075,2177]$, 
$[2197,2298]$, 
$[2319,2419]$, 
$[2441,2539]$, 
$[2563,2660]$, 
$[2685,2781]$, 
$[2807,2902]$, 
$[2930,3023]$, 
$[3052,3144]$, 
$[3174,3265]$, 
$[3296,3385]$, 
$[3418,3506]$, 
$[3540,3627]$, 
$[3663,3748]$, 
$[3785,3869]$, 
$[3907,3989]$, 
$[4029,4110]$, 
$[4152,4231]$, 
$[4274,4352]$, 
$[4396,4472]$, 
$[4518,4593]$, 
$[4641,4714]$, 
$[4763,4834]$, 
$[4885,4955]$, 
$[5008,5076]$, 
$[5130,5196]$, 
$[5253,5317]$, 
$[5375,5437]$, 
$[5498,5558]$, 
$[5620,5678]$, 
$[5743,5799]$, 
$[5865,5919]$, 
$[5988,6040]$, 
$[6111,6160]$, 
$[6233,6280]$, 
$[6356,6400]$, 
$[6479,6520]$, 
$[6602,6640]$, 
$[6725,6760]$, 
$[6848,6880]$, 
$[6972,7000]$, 
$[7096,7119]$, 
$[7220,7237]$, 
$[7347,7354]$.

\subsection{$q=4$}

\medskip

Exclusion list for $q=4$ and $r=1$: 
$[1,4]$, 
$[6,9]$, 
$[11,14]$, 
$[18,19]$.

\medskip

Exclusion list for $q=4$ and $r=2$: 
$[1,20]$, 
$[22,41]$, 
$[43,62]$, 
$[65,83]$, 
$[86,104]$, 
$[107,125]$, 
$[130,146]$, 
$[152,167]$, 
$[174,188]$, 
$[196,209]$, 
$[218,230]$, 
$[240,250]$, 
$[262,271]$, 
$[284,292]$, 
$[306,312]$, 
$[329,332]$.

\medskip

Exclusion list for $q=4$ and $r=3$: 
$[1,84]$, 
$[86,169]$, 
$[171,254]$, 
$[257,339]$, 
$[342,424]$, 
$[427,509]$, 
$[513,594]$, 
$[598,679]$, 
$[683,764]$, 
$[770,849]$, 
$[856,934]$, 
$[942,1019]$, 
$[1028,1104]$, 
$[1114,1189]$, 
$[1200,1274]$, 
$[1286,1359]$, 
$[1371,1444]$, 
$[1457,1529]$, 
$[1543,1614]$, 
$[1628,1699]$, 
$[1714,1784]$, 
$[1800,1869]$, 
$[1886,1954]$, 
$[1971,2039]$, 
$[2057,2124]$, 
$[2143,2208]$, 
$[2229,2293]$, 
$[2314,2378]$, 
$[2400,2463]$, 
$[2486,2548]$, 
$[2572,2633]$, 
$[2658,2718]$, 
$[2743,2803]$, 
$[2829,2887]$, 
$[2915,2972]$, 
$[3001,3057]$, 
$[3087,3142]$, 
$[3173,3227]$, 
$[3258,3312]$, 
$[3344,3396]$, 
$[3430,3481]$, 
$[3516,3566]$, 
$[3602,3651]$, 
$[3688,3735]$, 
$[3774,3820]$, 
$[3860,3905]$, 
$[3946,3990]$, 
$[4032,4074]$, 
$[4118,4159]$, 
$[4204,4244]$, 
$[4290,4328]$, 
$[4376,4413]$, 
$[4462,4497]$, 
$[4548,4582]$, 
$[4634,4666]$, 
$[4720,4751]$, 
$[4807,4835]$, 
$[4893,4920]$, 
$[4979,5004]$, 
$[5066,5088]$, 
$[5153,5172]$, 
$[5240,5256]$, 
$[5327,5339]$, 
$[5415,5422]$.

\subsection{$q=5$}

\medskip

Exclusion list for $q=5$ and $r=1$: 
$[1,5]$, 
$[7,11]$, 
$[13,17]$, 
$[19,23]$, 
$[27,29]$, 
$[33,35]$.

\medskip

Exclusion list for $q=5$ and $r=2$: 
$[1,30]$, 
$[32,61]$, 
$[63,92]$, 
$[94,123]$, 
$[126,154]$, 
$[157,185]$, 
$[188,216]$, 
$[219,247]$, 
$[252,278]$, 
$[283,309]$, 
$[316,340]$, 
$[347,371]$, 
$[379,402]$, 
$[410,433]$, 
$[442,464]$, 
$[473,495]$, 
$[505,526]$, 
$[537,557]$, 
$[568,587]$, 
$[600,618]$, 
$[632,649]$, 
$[663,680]$, 
$[695,711]$, 
$[727,742]$, 
$[758,772]$, 
$[790,803]$, 
$[822,834]$, 
$[854,864]$, 
$[886,895]$, 
$[918,925]$, 
$[951,955]$.

\medskip

Exclusion list for $q=5$ and $r=3$: 
$[1,155]$, 
$[157,311]$, 
$[313,467]$, 
$[469,623]$, 
$[626,779]$, 
$[782,935]$, 
$[938,1091]$, 
$[1094,1247]$, 
$[1251,1403]$, 
$[1407,1559]$, 
$[1563,1715]$, 
$[1719,1871]$, 
$[1877,2027]$, 
$[2033,2183]$, 
$[2191,2339]$, 
$[2347,2495]$, 
$[2504,2651]$, 
$[2660,2807]$, 
$[2817,2963]$, 
$[2973,3119]$, 
$[3130,3275]$, 
$[3287,3431]$, 
$[3443,3587]$, 
$[3600,3743]$, 
$[3757,3899]$, 
$[3913,4055]$, 
$[4070,4211]$, 
$[4226,4367]$, 
$[4383,4523]$, 
$[4539,4679]$, 
$[4696,4835]$, 
$[4852,4991]$, 
$[5009,5147]$, 
$[5165,5303]$, 
$[5322,5459]$, 
$[5478,5615]$, 
$[5635,5771]$, 
$[5791,5927]$, 
$[5948,6083]$, 
$[6104,6239]$, 
$[6261,6395]$, 
$[6418,6551]$, 
$[6574,6707]$, 
$[6731,6863]$, 
$[6887,7019]$, 
$[7044,7175]$, 
$[7200,7330]$, 
$[7357,7486]$, 
$[7513,7642]$, 
$[7670,7798]$, 
$[7826,7954]$, 
$[7983,8110]$, 
$[8140,8266]$, 
$[8296,8422]$, 
$[8453,8578]$, 
$[8609,8734]$, 
$[8766,8890]$, 
$[8922,9046]$, 
$[9079,9202]$, 
$[9236,9358]$, 
$[9392,9514]$, 
$[9549,9670]$, 
$[9705,9826]$, 
$[9862,9981]$, 
\dots

\subsection{$q=7$}

\medskip

Exclusion list for $q=7$ and $r=1$: 
$[1,7]$, 
$[9,15]$, 
$[17,23]$, 
$[25,31]$, 
$[33,39]$, 
$[41,47]$, 
$[51,55]$, 
$[59,63]$, 
$[67,71]$, 
$[76,79]$, 
$[84,87]$, 
$[93,94]$.

\medskip

Exclusion list for $q=7$ and $r=2$: 
$[1,56]$, 
$[58,113]$, 
$[115,170]$, 
$[172,227]$, 
$[229,284]$, 
$[286,341]$, 
$[344,398]$, 
$[401,455]$, 
$[458,512]$, 
$[515,569]$, 
$[572,626]$, 
$[629,683]$, 
$[688,740]$, 
$[745,797]$, 
$[802,854]$, 
$[860,911]$, 
$[917,968]$, 
$[975,1025]$, 
$[1033,1082]$, 
$[1090,1139]$, 
$[1147,1196]$, 
$[1205,1253]$, 
$[1262,1310]$, 
$[1319,1367]$, 
$[1377,1424]$, 
$[1434,1481]$, 
$[1491,1538]$, 
$[1549,1595]$, 
$[1606,1652]$, 
$[1664,1709]$, 
$[1721,1766]$, 
$[1778,1823]$, 
$[1836,1880]$, 
$[1893,1937]$, 
$[1950,1994]$, 
$[2008,2051]$, 
$[2065,2108]$, 
$[2123,2165]$, 
$[2180,2222]$, 
$[2237,2278]$, 
$[2295,2335]$, 
$[2352,2392]$, 
$[2410,2449]$, 
$[2467,2506]$, 
$[2524,2563]$, 
$[2582,2620]$, 
$[2639,2677]$, 
$[2697,2734]$, 
$[2754,2791]$, 
$[2811,2848]$, 
$[2869,2905]$, 
$[2926,2962]$, 
$[2984,3018]$, 
$[3041,3075]$, 
$[3099,3132]$, 
$[3156,3189]$, 
$[3213,3246]$, 
$[3271,3303]$, 
$[3328,3360]$, 
$[3386,3417]$, 
$[3443,3474]$, 
$[3501,3530]$, 
$[3558,3587]$, 
$[3616,3644]$, 
$[3673,3701]$, 
$[3731,3758]$, 
$[3788,3815]$, 
$[3846,3871]$, 
$[3903,3928]$, 
$[3961,3985]$, 
$[4018,4042]$, 
$[4076,4098]$, 
$[4134,4155]$, 
$[4191,4212]$, 
$[4249,4269]$, 
$[4306,4325]$, 
$[4364,4382]$, 
$[4422,4439]$, 
$[4480,4495]$, 
$[4537,4552]$, 
$[4595,4608]$, 
$[4653,4665]$, 
$[4711,4721]$, 
$[4769,4777]$, 
$[4827,4833]$, 
$[4886,4889]$.

\subsection{$q=8$}

\medskip

Exclusion list for $q=8$ and $r=1$: 
$[1,8]$, 
$[10,17]$, 
$[19,26]$, 
$[28,35]$, 
$[37,44]$, 
$[46,53]$, 
$[55,62]$, 
$[66,71]$, 
$[75,80]$, 
$[84,89]$, 
$[94,98]$, 
$[103,107]$, 
$[112,116]$, 
$[122,125]$, 
$[131,133]$, 
$[141,142]$.

\medskip

Exclusion list for $q=8$ and $r=2$: 
$[1,72]$, 
$[74,145]$, 
$[147,218]$, 
$[220,291]$, 
$[293,364]$, 
$[366,437]$, 
$[439,510]$, 
$[513,583]$, 
$[586,656]$, 
$[659,729]$, 
$[732,802]$, 
$[805,875]$, 
$[878,948]$, 
$[951,1021]$, 
$[1026,1094]$, 
$[1099,1167]$, 
$[1172,1240]$, 
$[1246,1313]$, 
$[1319,1386]$, 
$[1392,1459]$, 
$[1466,1532]$, 
$[1539,1605]$, 
$[1613,1678]$, 
$[1686,1751]$, 
$[1759,1824]$, 
$[1833,1897]$, 
$[1906,1970]$, 
$[1979,2043]$, 
$[2053,2116]$, 
$[2126,2189]$, 
$[2199,2262]$, 
$[2273,2335]$, 
$[2346,2408]$, 
$[2419,2481]$, 
$[2492,2554]$, 
$[2566,2627]$, 
$[2639,2700]$, 
$[2712,2773]$, 
$[2786,2846]$, 
$[2859,2919]$, 
$[2932,2992]$, 
$[3006,3065]$, 
$[3079,3138]$, 
$[3152,3211]$, 
$[3226,3284]$, 
$[3299,3357]$, 
$[3372,3430]$, 
$[3446,3503]$, 
$[3519,3576]$, 
$[3592,3649]$, 
$[3666,3722]$, 
$[3739,3795]$, 
$[3812,3868]$, 
$[3886,3940]$, 
$[3959,4013]$, 
$[4032,4086]$, 
$[4106,4159]$, 
$[4179,4232]$, 
$[4252,4305]$, 
$[4326,4378]$, 
$[4399,4451]$, 
$[4472,4524]$, 
$[4546,4597]$, 
$[4619,4670]$, 
$[4692,4743]$, 
$[4766,4816]$, 
$[4839,4889]$, 
$[4912,4962]$, 
$[4986,5035]$, 
$[5059,5108]$, 
$[5132,5181]$, 
$[5206,5253]$, 
$[5279,5326]$, 
$[5352,5399]$, 
$[5426,5472]$, 
$[5499,5545]$, 
$[5573,5618]$, 
$[5646,5691]$, 
$[5719,5764]$, 
$[5793,5837]$, 
$[5866,5910]$, 
$[5939,5983]$, 
$[6013,6056]$, 
$[6086,6128]$, 
$[6160,6201]$, 
$[6233,6274]$, 
$[6306,6347]$, 
$[6380,6420]$, 
$[6453,6493]$, 
$[6527,6566]$, 
$[6600,6639]$, 
$[6673,6712]$, 
$[6747,6784]$, 
$[6820,6857]$, 
$[6894,6930]$, 
$[6967,7003]$, 
$[7040,7076]$, 
$[7114,7149]$, 
$[7187,7222]$, 
$[7261,7294]$, 
$[7334,7367]$, 
$[7408,7440]$, 
$[7481,7513]$, 
$[7555,7586]$, 
$[7628,7659]$, 
$[7702,7731]$, 
$[7775,7804]$, 
$[7849,7877]$, 
$[7922,7950]$, 
$[7996,8022]$, 
$[8069,8095]$, 
$[8143,8168]$, 
$[8216,8241]$, 
$[8290,8313]$, 
$[8363,8386]$, 
$[8437,8459]$, 
$[8510,8532]$, 
$[8584,8604]$, 
$[8658,8677]$, 
$[8731,8749]$, 
$[8805,8822]$, 
$[8879,8895]$, 
$[8953,8967]$, 
$[9027,9039]$, 
$[9101,9112]$, 
$[9175,9184]$, 
$[9249,9256]$, 
$[9324,9327]$.

\subsection{$r=1$ and $9\le q\le 16$}

\medskip

Exclusion list for $q=9$ and $r=1$: 
$[1,9]$, 
$[11,19]$, 
$[21,29]$, 
$[31,39]$, 
$[41,49]$, 
$[51,59]$, 
$[61,69]$, 
$[71,79]$, 
$[83,89]$, 
$[93,99]$, 
$[103,109]$, 
$[113,119]$, 
$[124,129]$, 
$[134,139]$, 
$[144,149]$, 
$[155,159]$, 
$[165,169]$, 
$[176,178]$, 
$[186,188]$, 
$[197,198]$.

\medskip

Exclusion list for $q=11$ and $r=1$: 
$[1,11]$, 
$[13,23]$, 
$[25,35]$, 
$[37,47]$, 
$[49,59]$, 
$[61,71]$, 
$[73,83]$, 
$[85,95]$, 
$[97,107]$, 
$[109,119]$, 
$[123,131]$, 
$[135,143]$, 
$[147,155]$, 
$[159,167]$, 
$[171,179]$, 
$[184,191]$, 
$[196,203]$, 
$[208,215]$, 
$[220,227]$, 
$[233,239]$, 
$[245,251]$, 
$[257,263]$, 
$[270,275]$, 
$[282,287]$, 
$[294,299]$, 
$[306,310]$, 
$[319,322]$, 
$[331,334]$, 
$[344,346]$, 
$[356,357]$.

\medskip

Exclusion list for $q=13$ and $r=1$: 
$[1,13]$, 
$[15,27]$, 
$[29,41]$, 
$[43,55]$, 
$[57,69]$, 
$[71,83]$, 
$[85,97]$, 
$[99,111]$, 
$[113,125]$, 
$[127,139]$, 
$[141,153]$, 
$[155,167]$, 
$[171,181]$, 
$[185,195]$, 
$[199,209]$, 
$[213,223]$, 
$[227,237]$, 
$[241,251]$, 
$[256,265]$, 
$[270,279]$, 
$[284,293]$, 
$[298,307]$, 
$[312,321]$, 
$[327,335]$, 
$[341,349]$, 
$[355,363]$, 
$[369,377]$, 
$[383,391]$, 
$[398,405]$, 
$[412,419]$, 
$[426,433]$, 
$[440,447]$, 
$[455,461]$, 
$[469,474]$, 
$[483,488]$, 
$[498,502]$, 
$[512,516]$, 
$[526,530]$, 
$[540,544]$, 
$[555,558]$, 
$[569,571]$, 
$[584,585]$.

\medskip

Exclusion list for $q=16$ and $r=1$: 
$[1,16]$, 
$[18,33]$, 
$[35,50]$, 
$[52,67]$, 
$[69,84]$, 
$[86,101]$, 
$[103,118]$, 
$[120,135]$, 
$[137,152]$, 
$[154,169]$, 
$[171,186]$, 
$[188,203]$, 
$[205,220]$, 
$[222,237]$, 
$[239,254]$, 
$[258,271]$, 
$[275,288]$, 
$[292,305]$, 
$[309,322]$, 
$[326,339]$, 
$[343,356]$, 
$[360,373]$, 
$[378,390]$, 
$[395,407]$, 
$[412,424]$, 
$[429,441]$, 
$[446,458]$, 
$[463,475]$, 
$[480,492]$, 
$[498,509]$, 
$[515,526]$, 
$[532,543]$, 
$[549,560]$, 
$[566,577]$, 
$[583,594]$, 
$[601,611]$, 
$[618,628]$, 
$[635,645]$, 
$[652,662]$, 
$[669,679]$, 
$[686,696]$, 
$[704,713]$, 
$[721,730]$, 
$[738,747]$, 
$[755,764]$, 
$[772,781]$, 
$[790,798]$, 
$[807,814]$, 
$[824,831]$, 
$[841,848]$, 
$[858,865]$, 
$[876,882]$, 
$[893,899]$, 
$[910,916]$, 
$[927,933]$, 
$[944,950]$, 
$[962,967]$, 
$[979,984]$, 
$[996,1000]$, 
$[1014,1017]$, 
$[1031,1034]$, 
$[1048,1051]$, 
$[1066,1067]$, 
$[1083,1084]$.

\bigskip

\end{document}